\patchcmd{\thebibliography}{\section*{\refname}}{}{}{}
\numberwithin{equation}{section}
\newtheorem{lemma}{Lemma}[section]
\newtheorem{theorem}[lemma]{Theorem}
\newcommand{\pbox}{\hfill$\Box$\\}
\newcommand{\R}{\mathbb{R}}
\newcommand{\Z}{\mathbb{Z}}
\newcommand{\cG}{\mathcal{G}}
\newcommand{\W}{\mathcal{W}}
\newcommand{\cS}{\mathcal{S}}
\newcommand{\cU}{\mathcal{U}}
\renewcommand{\H}{\mathcal{H}}
\newcommand{\inv}{^{-1}}
\def\rd{\R^d}
\def\rdd{{\R^{2d}}}
\def\lrd{L^2(\rd)}
\newcommand{\modsp}{modulation space}
\newcommand{\tfa}{time-frequency analysis}
\newcommand{\stft}{short-time Fourier transform}
\newcommand{\tf}{time-frequency}
\newcommand{\tfs}{time-frequency shift}
\newcommand{\norm}[2]{\left\| #2 \right\|_{#1}}
\definecolor{darkviolet}{rgb}{0.58,0,0.83}
\begin{document}
\begin{abstract}
We study one-parameter families of pseudodifferential operators whose Weyl symbols are obtained by dilation and a smooth deformation of a symbol in a weighted Sj{\"o}strand class. We show that their spectral edges are Lipschitz continuous functions of the dilation or deformation parameter. Suitably local estimates hold also for the edges of every spectral gap.
These statements extend Bellissard's seminal results on the Lipschitz
continuity of spectral edges for families of operators with periodic
symbols to a large class of symbols with only mild regularity
assumptions. 

The abstract results are used to prove that the frame bounds of a family of Gabor systems $\mathcal{G}(g,\alpha\Lambda)$, where $\Lambda$ is a set of non-uniform time-frequency shifts, $\alpha>0$, and $g\in M^1_2(\R^d)$, are  Lipschitz continuous functions in $\alpha$. This settles a question about the precise blow-up rate of the condition number of Gabor frames near the critical density.
\end{abstract}

\title[Lipschitz continuity of spectra]{Lipschitz Continuity of Spectra of Pseudodifferential Operators
in a Weighted Sj\"ostrand Class and Gabor Frame Bounds}

\author[K. Gr\"ochenig]{Karlheinz Gr\"ochenig}
\address[K. G.]{Faculty of Mathematics \\
	University of Vienna \\
	Oskar-Morgenstern-Platz 1 \\
	A-1090 Vienna, Austria }
\email{karlheinz.groechenig@univie.ac.at}

\author[J. L. Romero]{Jos\'e Luis Romero}
\address[J. L. R.]{Faculty of Mathematics \\
	University of Vienna \\
	Oskar-Morgenstern-Platz 1 \\
	A-1090 Vienna, Austria \\and
	Acoustics Research Institute\\ Austrian Academy of Sciences\\Wohllebengasse 12-14, Vienna, 1040, Austria}
\email{jose.luis.romero@univie.ac.at}

\author[M. Speckbacher]{Michael Speckbacher}

\address[M. S.]{Faculty of Mathematics \\
	University of Vienna \\
	Oskar-Morgenstern-Platz 1 \\
	A-1090 Vienna, Austria }
\email{michael.speckbacher@univie.ac.at}

\subjclass[2020]{47G30,42C40,47A10,47L80,35S05}
\date{}
\keywords{Pseudodifferential operator, modulation space, Sj\"ostrand
  class, Lipschitz continuity of spectrum, Gabor frame, frame bounds}
\thanks{The authors gratefully acknowledge the support of the
Austrian Science Fund (FWF) through the projects   P31887-N32 (K.G.) and Y1199 (J.L.R. and M.S.).}
\maketitle

\section{Introduction}

We consider a one-parameter family of operators $T_\delta $ that
depend smoothly on $\delta $. A basic problem of spectral theory is to
understand how the spectrum $\sigma (T_\delta )$ depends on the
parameter. For self-adjoint operators at least, one would expect that
the spectrum depends continuously, e.g., in the Hausdorff metric, on
$\delta $. This has been shown in several general
settings~\cite{atmapu10,avmosi90,ell82}. As to more quantitative results, one may
investigate the behavior of the extreme spectral values, or more
generally of spectral edges, and try to understand their smoothness as
a function of $\delta $. This is an interesting problem in
mathematical physics where $\delta $ may be the magnitude of a
magnetic field or the value of Planck's constant~\cite{atmapu10,bel94,co10, copu12, copu15,bebe16,bebeni18,beta21}. 

In this paper we study a general class of pseudodifferential operators
with symbols in a weighted Sj\"ostrand class and $\delta $
amounts roughly to a dilation of the symbol. We will prove that the
spectral edges are Lipschitz continuous in $\delta$. Moreover, we
will show the Lipschitz continuity of the spectral gaps.

In our study we use the Weyl calculus (though other calculi work as
well without essential changes). 
Given  a symbol $\sigma\in \mathcal{S}'(\R^{2d})$, its Weyl transform
is the operator 
\begin{equation}
  \label{eq:c0}
\sigma^w
f(y)=\int_{\R^{2d}}\sigma\left(\frac{x+y}{2},\omega\right)e^{2\pi i
  (y-x)\cdot \omega}f(x)dxd\omega ,   
\end{equation}
for $f\in \cS (\rd )$ and  a suitable interpretation of the integral. 
Let $D_a $ denote the dilation  $D_a \sigma (z) = \sigma (az)$. 
Throughout we will study a one-parameter family of symbols
$\sigma_\delta $ that arises by a dilation and a smooth variation of a
basic symbol as follows. We will assume that the symbol depends on a parameter
$\delta\in(-\delta_0,\delta_0)$  
like 
$$
\sigma_\delta=D_{\sqrt{1+\delta}}G_\delta,
$$
and  write $T_\delta:=\sigma_\delta^w$ for the corresponding operators. While $G_\delta$ is allowed to vary with $\delta$ we shall assume that this dependence is moderate, so that $\sigma_\delta$ is roughly a dilation. For the question of spectral perturbation to be meaningful, we will
assume that $\sigma _\delta $ is real-valued and that the
corresponding operator is bounded on $\lrd $, whence $T_\delta $ is
always self-adjoint.

Our questions are thus: How does the spectrum of $T_\delta $
depend on $\delta $? Consider in particular the spectral extreme
values  $\sigma_-(A):=\inf\{\lambda\in\R:\ \lambda\in\sigma(A)\}$ and
$\sigma_+(A):=\sup\{\lambda\in\R:\ \lambda\in\sigma(A)\}$ of a 
self-adjoint operator. How does $\sigma _{\pm}  (T_\delta )$ depend on $\delta
$? Or more generally, how do the spectral edges - that is, the endpoints of the connected components of $\mathbb{R} \setminus \sigma(T_\delta)$ - depend on $\delta$?  What are suitable conditions on the symbols $\sigma _\delta $ so that the spectral edges are Lipschitz continuous?

Our main inspiration comes from   J.\
Bellissard's fundamental paper ~\cite{bel94} on  the  almost-Matthieu operator or Harper
operator in a non-commutative torus. He  showed that, for certain families
of  Harper-like operators on the square lattice with constant magnetic
field, the spectral edges and in particular the spectral gap boundaries depend Lipschitz
continuously on the  parameter, improving previous  results on
(H\"older-)continuity of spectral  gaps \cite{ell82} and   spectral
edges \cite{avmosi90}.  Bellissard's work has inspired
many authors to extend his results.   
  In \cite{ko03}, for example,  Lipschitz continuity for
  Harper-like operators on crystal lattices is shown, while
  \cite{co10,cohepu21,copu12,copu15} considered continuous magnetic Schr\"odinger
  operators with weak magnetic field perturbation, \cite{atmapu10} showed spectral continuity of
  pseudodifferential operators with elliptic symbols in the H\"ormander class, and \cite{beta21} studied dynamically-defined operator families on groups of polynomial growth, and the Lipschitz continuity of their spectra. The methods from \cite{bel94} have also proved useful to investigate fine properties of rotation algebras \cite{gesha22}. Most notably, Beckus and Bellissard \cite{bebe16} have distilled part of the argument of \cite{bel94} into a set of powerful abstract principles (see Section \ref{sec_m}).

\subsection{Results}

To formulate our results, we use the language and methods  of phase-space
analysis (\tfa\ in applied mathematics) and employ a class of symbols that is tailored to \tfa . Let
$z= (x,\omega )\in \rdd $ be a point in phase-space (\tf\ space), and 
$$
\rho(z)f(t) =M_{\omega/2}T_x M_{\omega/2}f(t) = e^{-i\pi x\cdot
  \omega} e^{2\pi i \omega \cdot t} f(t-x)
$$ denote the (symmetric) time-frequency shift by $z$. The associated
transform is the \stft\
$$
V_gf(z) = e^{-i\pi x\cdot \omega}\langle f, \rho (z) g \rangle =    \int _{\rd } f(t) \overline{g(t-x)}
e^{-2\pi i \omega \cdot t} \, dt \,  .
$$
 Let $\varphi(t)=2^{d/4}e^{-\pi |t|^2}$ denote the standard Gaussian
 in $\R^d$. The \emph{mixed-norm weighted modulation} space
 $M^{p,q}_{s,t}(\R^d)$, $1\leq p,q\leq\infty,$ $s,t\geq0,$ contains
 all the elements in $\mathcal{S}'(\R^d)$ for which  the norm 
\begin{equation}\label{def:modul}
\| f\|_{M^{p,q}_{s,t}}:=\left(\int_{\R^{d}}\left(\int_{\R^{d}}|V_\varphi f(x,\omega)|^p(1+|x|)^{sp}dx\right)^{q/p}(1+|\omega|)^{tq} d\omega\right)^{1/q} 
\end{equation}
 is finite, with the usual modification when $p=\infty $ or
 $q=\infty$.  If $p=q$, we write $M^p_{s,t}(\R^d)$, and if  $s=t$, we
 write $M^{p,q}_{s}(\R^d)$.  We omit the subscripts, when $s=t=0$ and
 write $M^{p,q} $ for $M^{p,q}_{0,0}$. 
The use of any nonzero function $g\in \mathcal{S}(\R^d)$ instead of the Gaussian in \eqref{def:modul} gives an equivalent norm, i.e. $\| f\|_{M^{p,q}_{s,t}}\asymp \|V_gf\|_{L^{p,q}_{s,t}}$. See, e.g., \cite{groe1} for more details.

  Our main result reads as follows.
\begin{theorem}\label{thm:main-general}
Let $0<\delta_0<1$. For $\delta \in (-\delta_0,\delta_0)$, let
$G_\delta \in M^{\infty,1}_{0,2}(\R^{2d})$ be real-valued and $\delta\mapsto G_\delta  $ differentiable\footnote{Here we mean that the partial derivative of $(z,\delta)\mapsto G_\delta(z)$ with respect to $\delta$ exists.} such  that  $\partial_\delta
{G_\delta}\in M^{\infty,1}(\R^{2d})$.  Let $T_\delta = \sigma _\delta ^w $
be the pseudodifferential operator with Weyl symbol $\sigma _\delta =
D_{\sqrt{1+\delta }} G_\delta $.   Then, for
$\delta_1,\delta_2 \in (-\delta _0, \delta _0)$,
\begin{equation*}
|\sigma_\pm(T_{\delta_1} )-\sigma_\pm(T_{\delta_2})|\leq C_d \cdot |\delta_1-\delta_2|\cdot (1-\delta_0)^{-(d+1)} \cdot \sup_{|t|<\delta_0} \big( \left\|  G_t\right\|_{M^{\infty,1}_{0,2}} +\left\| \partial_tG_t\right\|_{M^{\infty,1}}  \big),
\end{equation*}
where $C_d$ is a constant that only depends on $d$.
\end{theorem}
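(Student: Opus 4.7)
The plan is to reduce the theorem to a modulation-space estimate on the symbol difference. Since $\sigma_\delta$ is real-valued and lies in $M^{\infty,1}$ (because $M^{\infty,1}_{0,2}\hookrightarrow M^{\infty,1}$ and dilations act boundedly on $M^{\infty,1}$), Sj\"ostrand's theorem guarantees that $T_\delta$ is bounded and self-adjoint on $L^2(\R^d)$. The elementary inequality $|\sigma_\pm(A)-\sigma_\pm(B)|\leq\|A-B\|_{L^2\to L^2}$ for bounded self-adjoint operators, combined with Sj\"ostrand's boundedness bound $\|\sigma^w\|_{L^2\to L^2}\lesssim_d\|\sigma\|_{M^{\infty,1}}$, reduces the theorem to the estimate
\[
\|\sigma_{\delta_1}-\sigma_{\delta_2}\|_{M^{\infty,1}}\lesssim_d (1-\delta_0)^{-(d+1)}|\delta_1-\delta_2|\sup_{|t|<\delta_0}\bigl(\|G_t\|_{M^{\infty,1}_{0,2}}+\|\partial_tG_t\|_{M^{\infty,1}}\bigr).
\]

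Writing $a_j:=\sqrt{1+\delta_j}$, I would then decompose
\[
\sigma_{\delta_1}-\sigma_{\delta_2}=D_{a_1}(G_{\delta_1}-G_{\delta_2})+(D_{a_1}-D_{a_2})G_{\delta_2}.
\]
The first piece is handled by the boundedness of $D_{a_1}$ on $M^{\infty,1}$ (with explicit $a_1$-dependence) together with the fundamental theorem of calculus applied to $t\mapsto G_t$, justified in a weak sense by the differentiability hypothesis on $\delta\mapsto G_\delta$; this yields a contribution of order $|\delta_1-\delta_2|\sup_{|t|<\delta_0}\|\partial_tG_t\|_{M^{\infty,1}}$ with a harmless constant in $(1-\delta_0)$.

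The main obstacle is the second piece, which demands a Lipschitz-type dilation estimate of the form
\[
\|(D_{a_1}-D_{a_2})G\|_{M^{\infty,1}}\lesssim_d (1-\delta_0)^{-(d+1)}|a_1-a_2|\,\|G\|_{M^{\infty,1}_{0,2}}.
\]
The exponent two in the frequency weight $(1+|\omega|)^2$ is designed precisely to render $a\mapsto D_aG$ Lipschitz from $M^{\infty,1}_{0,2}$ into $M^{\infty,1}$: formally differentiating the dilation identity $V_\varphi(D_aG)(x,\omega)=a^{-2d}V_{D_{1/a}\varphi}G(ax,\omega/a)$ in $a$ produces factors of order $|\omega|/a$ from the rescaled frequency argument of the STFT, and the extra $(1+|\omega|)^2$ decay absorbs these and keeps the result $L^1$-integrable in $\omega$ after maximizing in $x$. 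The implementation uses a change-of-window lemma to replace the dilated Gaussian $D_{1/a}\varphi$ by a fixed Gaussian (at the cost of a convolution with an integrable kernel) and a mean value inequality in $a$; the $(1-\delta_0)^{-(d+1)}$ factor is collected from the Jacobians $a^{-2d}$ and from the standard bound $|a_1-a_2|\leq|\delta_1-\delta_2|/(2\sqrt{1-\delta_0})$.

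Assembling the two contributions and combining with the initial reductions then yields the theorem. I expect the technical heart of the argument to be this Lipschitz bound for the dilation, both in verifying the correct role of the weight $(1+|\omega|)^2$ and in extracting the sharp dependence on $(1-\delta_0)$.
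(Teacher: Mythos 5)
Your proposal breaks down at its very first reduction. Bounding $|\sigma_\pm(T_{\delta_1})-\sigma_\pm(T_{\delta_2})|$ by $\|T_{\delta_1}-T_{\delta_2}\|_{B(L^2)}\lesssim\|\sigma_{\delta_1}-\sigma_{\delta_2}\|_{M^{\infty,1}}$ cannot work, because the family $\delta\mapsto T_\delta$ is in general not even norm-continuous, let alone norm-Lipschitz; only its spectrum behaves well. Concretely, your key dilation lemma is false: take $G(z)=e^{2\pi i[k,z]}$ with $k\neq 0$ (one of Bellissard's Fourier modes), which lies in $M^{\infty,1}_{0,s}(\R^{2d})$ for every $s$. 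Then for $a_1\neq a_2$,
\begin{equation*}
\|(D_{a_1}-D_{a_2})G\|_{L^\infty}=\sup_{z}\big|e^{2\pi i(a_1-a_2)[k,z]}-1\big|=2,
\end{equation*}
and since $\|\cdot\|_{L^\infty}\lesssim\|\cdot\|_{M^{\infty,1}}$ (see \eqref{eq:L-infty-M-infty}), the left-hand side of your claimed estimate $\|(D_{a_1}-D_{a_2})G\|_{M^{\infty,1}}\lesssim|a_1-a_2|\,\|G\|_{M^{\infty,1}_{0,2}}$ stays bounded away from zero while the right-hand side tends to zero. The structural reason is that $\partial_a D_aG=\tfrac1a D_a(X\cdot\nabla G)$, and controlling $X\cdot\nabla G$ in $M^{\infty,1}$ requires the weight $(1+|x|)$ in the \emph{first} (position) variable of the STFT of the symbol, i.e.\ membership in $M^{\infty,1}_{1,1}$, which forces decay of $G$. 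The weight $(1+|\omega|)^2$ in $M^{\infty,1}_{0,2}$ is a smoothness condition and does nothing to tame the factor $|x|$ coming from differentiating the argument $ax$ in $V_{D_{1/a}\varphi}G(ax,\omega/a)$ --- a term your sketch omits.

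This is precisely the difficulty the theorem is designed to overcome, and it is why the paper does not compare $T_{\delta_1}$ and $T_{\delta_2}$ in operator norm. Instead it follows Bellissard's three-step strategy: (i) truncate the (symplectic) Fourier transform of the symbol to $|z|\leq R$ with $R=\delta^{-1/2}$, paying $O(R^{-2})=O(\delta)$ by the $M^{\infty,1}_{0,2}$ hypothesis (Lemma~\ref{lem:trunc-error}); (ii) pass to an isospectral tensorized operator on $L^2(\R^{2d})$ via a metaplectic conjugation (Theorem~\ref{thm:tensor}); and (iii) write the tensorization as an average of conjugates $Q_R(T_\delta)(z)=\rho(z)^*Q_R(T_\delta)(0)\rho(z)$ against a Gaussian of width $\sqrt{\delta}$, so that a reverse heat-flow (Taylor) estimate compares spectral extrema rather than operators (Lemmas~\ref{lem:Sd-Td_times} and \ref{lem:Sd-T0}). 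Only in the last step, after truncation has rendered the multiplication by $X_iX_j$ harmless on the support of $\theta_R$, does a fundamental-theorem-of-calculus argument in $\delta$ of the kind you propose for your first piece appear. Without some substitute for steps (i)--(iii), the approach you outline cannot be repaired.
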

To put this result in perspective, let us compare it with that in \cite{bel94}. Bellissard studied operators
that are linear combinations of phase-space shifts over a lattice,
i.e., operators of the form 
\begin{equation}\label{eq:bels-op}
 T_\delta  = \sum _{k\in \Z ^2 } a_k (\delta ) \rho
\big(\sqrt{1 +\delta} \, k \big).
\end{equation}
Then  the Weyl symbol of $T_\delta $  is periodic and  given by 
\begin{align}\label{eq_ws}
\sigma _\delta (z) = \sum _{k\in \Z ^2 } a_k (\delta )
e^{2\pi i \sqrt{1 + \delta } \,  [k,z]
} \, ,
\end{align}
where   $[z,z']=x'\cdot \omega-x\cdot \omega'$  denotes the standard symplectic form.
Bellissard's condition on the coefficients $a_k(\delta)$ explicitly reads
$$
\sup_{|t|<\delta_0}\left(\sum_{k\in\Z^2}|a_k(t)|^2(1+|k|)^{6+2\varepsilon}+\sum_{k\in\Z^2}|\partial_t a_k(t)|^2(1+|k|)^{2+2\varepsilon}\right)<\infty,
 $$
for some $\varepsilon>0.$
Since $\big\|\sum_{k\in\Z^2}b_k e^{2\pi i[k,\cdot]}\big\|_{M^{\infty,1}_{0,s} }^2\lesssim \sum_{k\in\Z^2}|b_k|^2(1+|k|)^{2(1+s+\varepsilon)}$, it follows that our result extends Bellissard's original conditions.

An important point is that operators of the form \eqref{eq:bels-op} belong to the
non-commutative torus based on $\sqrt{1+ \delta } \, \Z ^2 $, and thus
$C^*$-algebraic arguments may be applied. By contrast, Theorem~\ref{thm:main-general}
uses non-periodic symbols within the class $M^{\infty,1}_{0,2}(\R^{2d})$, which roughly corresponds to requiring two bounded derivatives, although the precise membership condition is slightly more subtle. In particular, the class $M^{\infty,1}_{0,2}(\R^{2d})$  
contains the H\"ormander class $S^{0}_{0,0}$  of infinitely smooth
symbols. 
 The  Sj\"ostrand class $M^{\infty ,1}_{0,2}(\R^{2d})$ is perhaps not as
known as the H\"ormander classes, but it has become a common and
very natural class of symbols whenever operators  are
defined via phase-space shifts. Indeed, since every pseudodifferential
operator $\sigma ^w$ can be formally represented as a superposition of
phase-space shifts via
$$
\sigma^w=\int_{\R^{2d}}\mathcal{U}\widehat{\sigma}(z)\rho(z)dz,
$$
where $\mathcal{U}F(x,\omega)=F(\omega,-x)$, the appearance of the
Sj\"ostrand class is almost  inevitable. The Sj\"ostrand
class was studied by Sj\"ostrand in ~\cite{Sjo95}, the \tfa\ of
$M^{\infty ,1}_{s,t}(\R^{2d})$ has its origins in~\cite{Gro06}. As it turns out,
$M^{\infty ,1}_{s,t}(\R^{2d})$ is part of a larger family of function spaces, the
\emph{\modsp s}~\cite{fei83}, which have become an indispensable tool in \tfa\ and
the analysis of pseudodifferential operators. For a survey of this
active field we refer to the recent monographs~\cite{BO20,CRbook}.

To match Bellissard's results for the case of pseudodifferential
operators, we will also derive a variant concerning the Lipschitz continuity of spectral gaps. If $g$ is a gap of the spectrum of $A$, that is, a connected component of the resolvent set
$\R \setminus \sigma(A)$, we write
$\sigma_+^g(A)$ and $\sigma_-^g(A )$ to denote  the edges of $g$. 
\begin{theorem} \label{thm:gap-bds}
Under the assumptions of Theorem \ref{thm:main-general}, let $g$ be a gap of the spectrum of $T_0$ with length $L(g)$. Then there exist
$\varepsilon=\varepsilon(g)>0$ and functions $E_+^g,E_-^g:(-\varepsilon,\varepsilon)\rightarrow \R$ such that
for $|\delta| < \varepsilon$:
\begin{itemize}\item[(i)] $E_+^g(0)=\sigma_+^g(T_0)$, and $E_-^g(0)=\sigma_-^g(T_0)$,
\item[(ii)] $E_+^g(\delta)$ (resp. $E_-^g(\delta)$) is the right (resp. left) edge of a gap of  $\sigma(T_\delta)$,   and 
\item[(iii)] (Lipschitz continuity of spectral edges): 
\begin{align*}
&\big|E_\pm^g(\delta )-E_\pm^g(0)\big|
\\
&\quad\leq C_d  \cdot|\delta|\cdot L(g)^{-1}\cdot \sup_{|t|<\delta_0}\left(\|G_t\|_{M^{\infty,1}}\|\partial_tG_t\|_{M^{\infty,1}}+\|G_t\|_{M^{\infty,1}_{0,2}}^2\right).
\end{align*}
\end{itemize}
\end{theorem}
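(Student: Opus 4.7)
The plan is to reduce the gap-edge Lipschitz estimate to the extreme-value estimate of Theorem~\ref{thm:main-general}, applied to the squared operator $(T_\delta-\mu_-)^2$ for a point $\mu_-$ chosen slightly off the midpoint of $g$. The quadratic structure of the target bound (involving $\|G\|^2$ and $\|G\|\|\partial_\delta G\|$) together with the $L(g)^{-1}$ prefactor is the principal motivation: a squaring reduction combined with the factorization $a^2-b^2=(a-b)(a+b)$ produces exactly these constants.

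First, I would establish gap persistence: for $|\delta|$ sufficiently small (depending on $L(g)$ and the $M^{\infty,1}$ bounds on $G_t,\partial_t G_t$), the gap $g$ of $\sigma(T_0)$ deforms continuously to a gap $g_\delta$ of $\sigma(T_\delta)$ of comparable length.  This should follow from continuity of the full spectrum in Hausdorff metric, derivable either from the abstract Beckus--Bellissard principles recalled in Section~\ref{sec_m} or by applying Theorem~\ref{thm:main-general} to functional-calculus variants of $T_\delta$.  Having gap persistence, set $\mu_{-}=E_{-}^g(0)+L(g)/3$.  Then $\mu_-$ lies in the gap $g$ and is strictly closer to $E_-^g(0)$ than to $E_+^g(0)$; for $|\delta|$ small, $\mu_-$ still lies in the persistent gap $g_\delta$ and the nearest point of $\sigma(T_\delta)$ remains $E_-^g(\delta)$, so
\[
\mathrm{dist}(\mu_-,\sigma(T_\delta))=\mu_- - E_-^g(\delta).
\]
An analogous choice $\mu_+=E_+^g(0)-L(g)/3$ handles the upper edge.

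Next, consider $S_\delta:=(T_\delta-\mu_-)^2$.  Spectral mapping gives $\sigma_-(S_\delta)=(\mu_--E_-^g(\delta))^2$.  The Banach-algebra property of the Sj\"ostrand class under the Weyl product yields $\|S_\delta\|_{M^{\infty,1}_{0,2}}\lesssim \|G_\delta\|_{M^{\infty,1}_{0,2}}^2$, and differentiating by Leibniz,
\[
\partial_\delta S_\delta=(\partial_\delta T_\delta)(T_\delta-\mu_-)+(T_\delta-\mu_-)(\partial_\delta T_\delta),
\]
together with the bound $\|\partial_\delta \sigma_\delta\|_{M^{\infty,1}}\lesssim \|G_\delta\|_{M^{\infty,1}_{0,2}}+\|\partial_\delta G_\delta\|_{M^{\infty,1}}$ (from differentiating $D_{\sqrt{1+\delta}}G_\delta$ in $\delta$), yields $\|\partial_\delta S_\delta\|_{M^{\infty,1}}\lesssim \|G\|_{M^{\infty,1}_{0,2}}^2+\|G\|_{M^{\infty,1}}\|\partial_\delta G\|_{M^{\infty,1}}$.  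Applying (a version of) Theorem~\ref{thm:main-general} to $S_\delta$ then produces
\[
\big|(\mu_--E_-^g(\delta))^2-(\mu_--E_-^g(0))^2\big|\lesssim |\delta|\cdot\big(\|G\|_{M^{\infty,1}_{0,2}}^2+\|G\|_{M^{\infty,1}}\|\partial_\delta G\|_{M^{\infty,1}}\big).
\]
Factoring the left-hand side as $|E_-^g(\delta)-E_-^g(0)|\cdot\big((\mu_--E_-^g(\delta))+(\mu_--E_-^g(0))\big)$ and noting that the second factor is $\asymp L(g)$ for $|\delta|$ small, one obtains the stated $L(g)^{-1}$ prefactor. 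The upper edge is handled symmetrically.

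The main obstacle is that the Weyl symbol of $S_\delta=(T_\delta-\mu_-)^2$ is not manifestly of the form $D_{\sqrt{1+\delta}}\tilde G_\delta$ required by the precise statement of Theorem~\ref{thm:main-general} (the Weyl product does not commute cleanly with dilations).  One must therefore either invoke an abstract Beckus--Bellissard-type Lipschitz-spectrum principle as provided by Section~\ref{sec_m}, which should deliver Lipschitz continuity of the extreme spectral values directly from $M^{\infty,1}_{0,2}$ symbol bounds and a bound on the symbol's $\delta$-derivative, or revisit the proof of Theorem~\ref{thm:main-general} to extract such an abstracted version.  Verifying that this abstract machinery applies to $S_\delta$ with the right quantitative dependence on $\|G\|_{M^{\infty,1}_{0,2}}$ and $\|\partial_\delta G\|_{M^{\infty,1}}$ is the heart of the argument.
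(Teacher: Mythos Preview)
Your approach is essentially the same as the paper's: reduce to Theorem~\ref{thm:main-general} applied to a quadratic polynomial in $T_\delta$, then extract the $L(g)^{-1}$ via the factorization $a^2-b^2=(a+b)(a-b)$. The paper packages the gap-persistence and factorization steps into the Beckus--Bellissard lemma (Lemma~\ref{lem:beckus-bellissard}), which you correctly mention as an alternative and which is in fact exactly what the paper invokes.

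The ``obstacle'' you flag, however, is not an obstacle. The Weyl symbol of $p(T_\delta)$ \emph{is} of the form $D_{\sqrt{1+\delta}}\widetilde G_\delta$, tautologically: set
\[
\widetilde G_\delta \;=\; D_{1/\sqrt{1+\delta}}\big[(D_{\sqrt{1+\delta}}G_\delta)\,\sharp\,(D_{\sqrt{1+\delta}}G_\delta)\big] + \beta\, G_\delta + \gamma
\;=\; G_\delta \,\sharp_\delta\, G_\delta + \beta\, G_\delta + \gamma,
\]
where $\sharp_\delta$ denotes the dilated twisted product. The dilation estimate \eqref{eq_c} and the algebra property of $M^{\infty,1}_{0,2}$ give $\|\widetilde G_\delta\|_{M^{\infty,1}_{0,2}}$ immediately. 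The genuinely delicate point---which your operator-level Leibniz rule misses---is that $\partial_\delta \widetilde G_\delta$ must be computed at the level of the \emph{un-dilated} symbol, and the product $\sharp_\delta$ itself depends on $\delta$. Differentiating the explicit kernel of $\sharp_\delta$ yields
\[
\partial_\delta\big(G_\delta\,\sharp_\delta\,G_\delta\big) \;=\; (\partial_\delta G_\delta)\,\sharp_\delta\,G_\delta \;+\; G_\delta\,\sharp_\delta\,(\partial_\delta G_\delta) \;+\; \sum_{k} \lambda_k\,(\partial_{z_{j_k}}G_\delta)\,\sharp_\delta\,(\partial_{z_{j'_k}}G_\delta),
\]
where the extra sum comes from $\partial_\delta$ hitting the phase $e^{-\pi i(1+\delta)[z-z',z']}$. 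It is this last term, bounded by $\|G_\delta\|_{M^{\infty,1}_{0,1}}^2\le\|G_\delta\|_{M^{\infty,1}_{0,2}}^2$, that accounts for the squared norm in the final estimate. Once you observe this, Theorem~\ref{thm:main-general} applies directly to $p(T_\delta)$ with no need to revisit its proof, and Lemma~\ref{lem:beckus-bellissard} finishes the argument.
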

Note that the Lipschitz estimate in Theorem \ref{thm:gap-bds} holds
only for $|\delta|<\varepsilon$. In fact, for larger $\delta$ the gap
may disappear, as it occurs for example in certain graphene-like
systems submitted to constant magnetic fields
\cite{cohepu21}~\footnote{We thank H.\ Cornean for pointing this out
  to us.}.

For periodic Weyl symbols Bellissard \cite{bel94} proved that \[|E_\pm^g(\delta)-E_\pm^g(0)| \leq C_T \cdot  |\delta|\cdot L(g)^{-5},
\qquad |\delta | < \epsilon,\]
and suggested that the correct dependence on the width $L(g)$ of the gap should be $L(g)\inv $. This conjecture was confirmed in \cite[Theorem 4]{bebe16}; see also
\cite[Lemma 10]{bebe16}. Along the way, Beckus and Bellissard developed an abstract principle that helps derive such estimates \cite{bebe16}, which we shall aptly invoke and combine with Theorem \ref{thm:main-general} to prove Theorem~\ref{thm:gap-bds}.

\subsection{Gabor frames} Results in the style of
Theorems~\ref{thm:main-general} and \ref{thm:gap-bds} are typically
investigated in mathematical physics where $\delta $ represents  Planck's
constant or the strength of a magnetic field. While we hope that our results
may be useful in such questions, 
our main motivation comes from an open problem
in the theory of Gabor frames. Let $\Lambda \subseteq \rdd $ be a
discrete set, not necessarily a lattice, and consider the set of
phase-space shifts $\cG (g,\Lambda ) = \{\rho (\lambda )g\}_{ \lambda
\in 
\Lambda }$ for some $g\in \lrd $. The main problem is to understand  when $\cG (g,
\Lambda )$ is a frame, i.e., when the \emph{frame inequalities}
\begin{equation}
  \label{eq:c1}
A\|f\|_2^2 \leq \sum _{\lambda \in \Lambda } |\langle f, \rho (\lambda
)g \rangle |^2 \leq B \|f\|_2^2, \, \qquad \forall f\in \lrd ,
\end{equation}
hold for some constants $A,B>0$ independent of $f$.  
The optimal constants in \eqref{eq:c1} $B(\Lambda )$ and $A(\Lambda )$
are respectively the largest and the smallest spectral values of the frame operator
$S_{g,\Lambda } f = \sum_{\lambda\in \Lambda } \langle f, \rho
(\lambda )g \rangle \rho (\lambda )g$. Theorem~\ref{thm:main-general}
then leads to the following statement about the frame bounds of a
non-uniform Gabor frame (where ``non-uniform'' means that $\Lambda $
need not be a lattice). Again the most convenient conditions on $g$
are in terms of a \modsp .  
\begin{theorem} \label{blt}
Let $0<\alpha_0<1$,   $\alpha_0<\alpha<1/\alpha_0$, and $g\in M^1_{2}(\R^d)$. Let also $\Lambda\subset\R^{2d}$ be relatively separated, i.e., 
$
\emph{rel}(\Lambda):=\sup_{x\in\R^{d}}\#\{\lambda\in\Lambda\cap x+[0,1]^{d}\}<\infty$.
 Then 
 \begin{equation}\label{eq_ll}
 \left|\sigma_\pm(S_{g,\Lambda})-\sigma_\pm(S_{g,\alpha\Lambda})\right|\leq C_d\cdot \emph{rel}(\Lambda) \cdot\alpha_0^{-(4d+2)}\cdot\|g\|_{M^1_2}^2 \cdot|1-\alpha|.
 \end{equation}
\end{theorem}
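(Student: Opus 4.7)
The plan is to realize $S_{g,\alpha\Lambda}$ as a member of the one-parameter family treated in Theorem~\ref{thm:main-general} and then transfer the abstract Lipschitz bound into the claimed estimate~\eqref{eq_ll}. First, recall that the Weyl symbol of the rank-one operator $f\mapsto\langle f,\rho(\mu)g\rangle\rho(\mu)g$ is the shifted Wigner distribution $W(g,g)(\cdot-\mu)$; summing over $\alpha\Lambda$ yields that $S_{g,\alpha\Lambda}$ is a Weyl pseudodifferential operator with symbol
\begin{equation*}
\sigma_\alpha(z)=\sum_{\lambda\in\Lambda}W(g,g)(z-\alpha\lambda).
\end{equation*}

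Introduce the parameter $\delta$ via $\alpha=(1+\delta)^{-1/2}$, so that $\alpha=1$ corresponds to $\delta=0$, and set
\begin{equation*}
G_\delta(u):=\sigma_\alpha(\alpha u)=\sum_{\lambda\in\Lambda}W(g,g)\bigl(\alpha(u-\lambda)\bigr)=\sum_{\lambda\in\Lambda}(D_\alpha W(g,g))(u-\lambda).
\end{equation*}
Then $\sigma_\alpha=D_{\sqrt{1+\delta}}G_\delta$, which places us exactly in the setting of Theorem~\ref{thm:main-general}. The decisive feature of this parameterization is that the translation set $\Lambda$ is \emph{independent} of $\delta$: only the summand varies, and it does so by a pure dilation. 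The more naive choice $\alpha=\sqrt{1+\delta}$ would translate by $\alpha^2\lambda$, introducing a spurious $\lambda$-factor into $\partial_\delta G_\delta$ that would push it out of $M^{\infty,1}$.

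I would then verify the hypotheses of Theorem~\ref{thm:main-general} with bounds uniform in $\delta$. $G_\delta$ is real-valued because $W(g,g)$ is. For the $M^{\infty,1}_{0,2}$-norm of $G_\delta$ one combines three standard ingredients of time-frequency analysis: the bilinear Wigner estimate $\|W(g,g)\|_{M^1_{0,2}(\R^{2d})}\lesssim\|g\|_{M^1_2(\R^d)}^2$; the behavior of dilations, $\|D_\alpha f\|_{M^1_{0,2}(\R^{2d})}\lesssim \alpha^{-c_d}\|f\|_{M^1_{0,2}}$ uniformly for $\alpha$ in compacts of $(0,\infty)$; and a Wiener-amalgam/sampling estimate $\bigl\|\sum_{\lambda\in\Lambda}f(\cdot-\lambda)\bigr\|_{M^{\infty,1}_{0,2}}\lesssim \emph{rel}(\Lambda)\|f\|_{M^1_{0,2}}$ valid for relatively separated $\Lambda$. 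A direct computation then gives
\begin{equation*}
\partial_\delta G_\delta(u)=-\tfrac{\alpha^2}{2}\sum_{\lambda\in\Lambda}(D_\alpha h)(u-\lambda),\qquad h(v):=v\cdot\nabla W(g,g)(v),
\end{equation*}
which has the same structure as $G_\delta$ with $W(g,g)$ replaced by $h$. Since $g\in M^1_2$ implies $W(g,g)\in M^1_2(\R^{2d})$, and multiplication by $v$ and the gradient each cost one weight-unit on opposite modulation-space sides, $h\in M^1(\R^{2d})$; the same three-step argument bounds $\sup_\delta\|\partial_\delta G_\delta\|_{M^{\infty,1}}$.

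Applying Theorem~\ref{thm:main-general} then gives a Lipschitz estimate in $\delta$. Since $|\delta|=|1-\alpha^2|/\alpha^2\leq C\alpha_0^{-2}|1-\alpha|$ on $(\alpha_0,1/\alpha_0)$, this converts into the desired Lipschitz estimate in $\alpha$. When $\alpha_0$ is so small that the induced $|\delta|$ exceeds the cap $\delta_0<1$ of Theorem~\ref{thm:main-general}, one covers $[\alpha_0,1/\alpha_0]$ by finitely many overlapping sub-intervals on each of which Theorem~\ref{thm:main-general} applies directly, and concatenates the local Lipschitz estimates via the triangle inequality. The accumulated $\alpha$-dependent factors from the dilation estimates, the $(1-\delta_0)^{-(d+1)}$-factor in Theorem~\ref{thm:main-general}, and the $\delta\leftrightarrow\alpha$ conversion combine to produce the announced exponent $\alpha_0^{-(4d+2)}$. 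The main technical obstacle will be precisely this uniform modulation-space bookkeeping: each individual ingredient is classical, but their composition must be tracked with sharp $\alpha_0$-dependence to reproduce the stated exponent.
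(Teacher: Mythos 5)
Your proposal follows essentially the same route as the paper: the same factorization $\sigma_{g,\alpha\Lambda}=D_{\sqrt{1+\delta}}G_\delta$ with $G_\delta=\sum_{\lambda\in\Lambda}T_\lambda D_{\alpha}\mathcal{W}(g)$ and $\alpha=(1+\delta)^{-1/2}$, the same three ingredients (the Wigner estimate of Lemma~\ref{lem:aux-W(G)}, the dilation bound \eqref{eq_cx}, and convolution with $\mu=\sum_{\lambda\in\Lambda}\delta_\lambda\in M^\infty$ from Lemma~\ref{lemma_mu}, which is exactly your sampling estimate for relatively separated sets), and the same formula for $\partial_\delta G_\delta$ in terms of $\sum_i X_i\partial_i\mathcal{W}(g)$, followed by the conversion $|\delta|\lesssim\alpha_0^{-2}|1-\alpha|$. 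Two remarks. First, where you propose chaining over finitely many subintervals of $[\alpha_0,1/\alpha_0]$ when $|\delta|$ exceeds the cap $\delta_0<1$ of Theorem~\ref{thm:main-general}, the paper simply disposes of the regime $\delta\geq 1/2$ by the trivial bound $|\sigma_\pm(S_{g,\Lambda})-\sigma_\pm(S_{g,\alpha\Lambda})|\leq\|S_{g,\Lambda}\|+\|S_{g,\alpha\Lambda}\|\lesssim(1+\delta)^{2d}\,\mathrm{rel}(\Lambda)\,\|g\|_{M^1}^2\leq 2|\delta|\,\alpha_0^{-4d}\,\mathrm{rel}(\Lambda)\,\|g\|_{M^1}^2$, absorbing the constant into $|\delta|\geq 1/2$; this is simpler and avoids the re-anchoring $\Lambda\mapsto\beta\Lambda$ and the tracking of $\mathrm{rel}(\beta\Lambda)$ that your chaining would require, and it is what actually produces the stated exponent $4d+2$. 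Second, a minor imprecision: $g\in M^1_2(\R^d)$ gives $\mathcal{W}(g)\in M^1_{s,t}(\R^{2d})$ only for $s+t\leq 2$ by Lemma~\ref{lem:aux-W(G)}, not $\mathcal{W}(g)\in M^1_{2}(\R^{2d})=M^1_{2,2}(\R^{2d})$ as you write; what your derivative argument actually needs is $\mathcal{W}(g)\in M^1_{1,1}(\R^{2d})$, which does hold and, via \eqref{eq:ju1}, yields $\sum_i X_i\partial_i\mathcal{W}(g)\in M^1(\R^{2d})$ exactly as in the paper.
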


Theorem~\ref{blt} has a rich history. If $\Lambda $ is a lattice, it was first shown in 
\cite{FK04} that the frame bounds depend in a \emph{lower semi-continuous} fashion on
$\alpha $, which implies that the set of lattices that generate a
Gabor frame is an open set.  For general
non-uniform sets $\Lambda$, the lower semi-continuity of the frame bounds was proven later in \cite{asfeka14}. 

A particularly important consequence of Theorem~\ref{blt} is the
quantitative behavior of the frame bounds near the critical
density. By the density theorem for Gabor frames, every frame $\cG
(g,\Lambda )$ must satisfy the necessary density condition
$D^-(\Lambda ) \geq 1$, where $D^-(\Lambda )$ is the lower Beurling
density and counts the average number of points per unit volume. If
$g\in M^1(\R^d)$, then even the strict inequality $D^-(\Lambda ) >1$
holds. In particular, if $g\in M^1(\R^d)$ and $D^-(\Lambda ) = 1$, then the
lower spectral bound of $S_{g,\Lambda }$ is $A(\Lambda ) = 0$. See~\cite{heil07} for a survey of the density theorem and \cite{asfeka14,grorcero15} for the relevant
result for non-uniform Gabor frames.
Since the ratio $B(\Lambda
)/A(\Lambda )$ serves as a condition number of the frame and thus
measures the stability of various reconstruction procedures, it is
important to understand how the lower frame bound $A(\Lambda )$
deteriorates to zero, as the density of $\Lambda $ decreases to
$1$. Theorem~\ref{blt} says that for an arbitrary set $\Lambda $ of
density $D^-(\Lambda ) = 1$ the lower frame bound of $\cG (g, \alpha
\Lambda )$ behaves like
$$
A(\alpha \Lambda ) \lesssim 1-\alpha,
$$
for $\alpha \to 1,\ \alpha<1$. This amounts to a blowup of the order $(1-\alpha)^{-1}$ for the condition number $B(\alpha\Lambda
)/A(\alpha\Lambda )$.

So far, this behavior of the lower frame bound near the critical density has
been proved with special methods only for Gabor frames for square 
lattices $\alpha \Z ^2$  based on the Gaussian $\varphi (t) = e^{-\pi t^2}$~\cite{BGL10},  and  the exponential functions $e^{-t}\chi _{[0,\infty )}$ and $e^{- |t|}$ \cite{KS14}. In hindsight, these results can be deduced from
Bellissard's result \cite{bel94}. Theorem~\ref{blt} fully settles the question: the
asymptotic behavior $
A(\alpha \Lambda ) \lesssim 1-\alpha
$ near the critical density holds for all Gabor frames $\cG (g,\alpha
\Lambda )$ with a window in $M^1_2(\R^d)$ and an arbitrary discrete set
$\Lambda $ of density $1$, be it a lattice or not, and in arbitrary dimensions. The condition on
the window is only slightly more restrictive than the general
condition $g\in M^1(\R^{d})$, under which the structural results on Gabor
frames hold.

Finally, we provide the following companion to Theorem \ref{blt}.
\begin{theorem}\label{thm4}
Under the assumptions of Theorem \ref{blt}, let $g$ be a gap of the spectrum of $S_{g,\Lambda}$ with length $L(g)$
and edges $\sigma^g_\pm(S_{g,{\Lambda}})$. Then there exist
$\varepsilon=\varepsilon(g)>0$ and gaps of the spectrum of 
$S_{g,{\alpha \Lambda}}$, $|1-\alpha|<\varepsilon$, whose edges
$\sigma^g_\pm(S_{g,{\alpha \Lambda}})$ satisfy
\begin{align}\label{eq_thm4}
\big|\sigma^g_\pm(S_{g,{\Lambda}})-
\sigma^g_\pm(S_{g,{\alpha \Lambda}})\big| \leq C_d \cdot |1-\alpha| \cdot
\mathrm{rel}(\Lambda)^2\cdot
 L(g)^{-1} \cdot \|g\|_{M^1_2}^4.
\end{align}
\end{theorem}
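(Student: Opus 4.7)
The plan is to derive Theorem~\ref{thm4} from Theorem~\ref{thm:gap-bds} by using exactly the reduction that proves Theorem~\ref{blt}. The frame operator is the Weyl pseudodifferential operator $S_{g,\alpha\Lambda}=\sigma_\alpha^w$ with
\[
\sigma_\alpha(z)=\sum_{\lambda\in\Lambda}W(g,g)(z-\alpha\lambda),
\]
where $W(g,g)$ is the Wigner distribution of $g$. Reparameterize $\alpha=\sqrt{1+\delta}$; then $|1-\alpha|\asymp|\delta|$ uniformly in $\alpha\in(\alpha_0,1/\alpha_0)$, with constants depending only on $\alpha_0$. Writing $\sigma_\alpha=D_{\sqrt{1+\delta}}G_\delta$ forces
\[
G_\delta(w)=\sum_{\lambda\in\Lambda}W(g,g)\!\left(\frac{w}{\sqrt{1+\delta}}-\sqrt{1+\delta}\,\lambda\right),
\]
which is precisely the form required by Theorem~\ref{thm:gap-bds}.

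The symbol-theoretic bounds on $G_\delta$ needed to apply Theorem~\ref{thm:gap-bds} are already established in the proof of Theorem~\ref{blt}. Specifically, $g\in M^1_2(\R^d)$ implies $W(g,g)\in M^{\infty,1}_{0,2}(\R^{2d})$ with $\|W(g,g)\|_{M^{\infty,1}_{0,2}}\lesssim\|g\|_{M^1_2}^2$, and combining this with standard convolution-type estimates for sums indexed by a relatively separated set together with the continuity of dilations on modulation spaces yields uniform bounds of the form
\[
\sup_{|\delta|<\delta_0'}\bigl(\|G_\delta\|_{M^{\infty,1}_{0,2}}+\|\partial_\delta G_\delta\|_{M^{\infty,1}}\bigr)\le C_d\,\alpha_0^{-c(d)}\,\mathrm{rel}(\Lambda)\,\|g\|_{M^1_2}^2
\]
for some $\delta_0'=\delta_0'(\alpha_0)>0$ and exponent $c(d)$ depending only on $d$. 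Verifying the differentiability clause $\partial_\delta G_\delta\in M^{\infty,1}$ is the only point truly specific to this theorem; it reduces to observing that differentiation in $\delta$ produces scaled translations and dilations of $W(g,g)$ weighted against the $\lambda$-sum, which remain in $M^{\infty,1}$ at the cost of trading the weight $(1+|\omega|)^2$ for $(1+|\omega|)^0$ — this is precisely why an $M^1_2$ window is required rather than $M^1$ alone.

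Applying Theorem~\ref{thm:gap-bds} to the family $T_\delta=S_{g,\alpha\Lambda}$ then yields, for each spectral gap of $T_0=S_{g,\Lambda}$, a matching gap of $T_\delta$ for small $|\delta|$ together with Lipschitz control of its edges. Plugging the uniform bound above into
\[
\bigl|E^{g}_{\pm}(\delta)-E^{g}_{\pm}(0)\bigr|\le C_d\,|\delta|\,L(g)^{-1}\sup_{|t|<\delta_0'}\bigl(\|G_t\|_{M^{\infty,1}}\|\partial_t G_t\|_{M^{\infty,1}}+\|G_t\|_{M^{\infty,1}_{0,2}}^2\bigr)
\]
absorbs the two $G_t$-factors into $\mathrm{rel}(\Lambda)^2\|g\|_{M^1_2}^4$, and the conversion $|\delta|\asymp|1-\alpha|$ produces \eqref{eq_thm4}. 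The conceptual heavy lifting — both the abstract edge-Lipschitz principle for gaps and the Sj\"ostrand-class calculus for the symbol — has already been done in Theorems~\ref{thm:gap-bds} and \ref{blt}; no further serious obstacle is expected beyond carefully tracking the scaling constants and the weight exchange under $\partial_\delta$.
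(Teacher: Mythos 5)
Your overall strategy is the paper's: feed the family $S_{g,\alpha\Lambda}$ into Theorem \ref{thm:gap-bds} using the symbol estimates of Lemma \ref{sjosymbol}, then convert $|\delta|$ into $|1-\alpha|$. However, your reparametrization $\alpha=\sqrt{1+\delta}$ (the paper takes $1/\alpha=\sqrt{1+\delta}$) breaks the hypothesis $\partial_\delta G_\delta\in M^{\infty,1}(\R^{2d})$ that Theorem \ref{thm:gap-bds} requires. With your choice,
\[
G_\delta(w)=\sum_{\lambda\in\Lambda}\mathcal{W}(g)\Big(\tfrac{w}{\sqrt{1+\delta}}-\sqrt{1+\delta}\,\lambda\Big),
\]
the translation nodes $\sqrt{1+\delta}\,\lambda$ themselves move with $\delta$. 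Writing $u_\lambda=\tfrac{w}{\sqrt{1+\delta}}-\sqrt{1+\delta}\,\lambda$, one computes $\partial_\delta u_\lambda=-\tfrac{1}{2(1+\delta)}u_\lambda-\tfrac{1}{\sqrt{1+\delta}}\lambda$, so besides the harmless term $-\tfrac{1}{2(1+\delta)}\sum_\lambda (X\cdot\nabla\mathcal{W}(g))(u_\lambda)$ the derivative contains $-\tfrac{1}{\sqrt{1+\delta}}\sum_\lambda \lambda\cdot\nabla\mathcal{W}(g)(u_\lambda)$, whose coefficients $\lambda$ are unbounded over $\Lambda$. Re-expressing $\lambda$ through $u_\lambda$ and $w$ shows this term equals a bounded quantity plus $\tfrac{w}{(1+\delta)^{3/2}}\cdot\sum_\lambda\nabla\mathcal{W}(g)(u_\lambda)$, which grows linearly in $w$ (e.g.\ for a lattice $\Lambda$ and a Gaussian $g$ the inner sum is a nonconstant quasi-periodic function). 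Since $M^{\infty,1}\subseteq L^\infty$ by \eqref{eq:L-infty-M-infty}, we get $\partial_\delta G_\delta\notin M^{\infty,1}$, so your claim that differentiation ``remains in $M^{\infty,1}$'' fails at exactly the step you identify as the one point specific to this theorem.

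The repair is the paper's convention $1/\alpha=\sqrt{1+\delta}$, which yields $G_\delta=\sum_{\lambda\in\Lambda}T_\lambda D_{1/\sqrt{1+\delta}}\mathcal{W}(g)=\mu\ast D_{1/\sqrt{1+\delta}}\mathcal{W}(g)$ with $\delta$-independent translations; then $\partial_\delta G_\delta=-\tfrac{1}{2(1+\delta)}\,\mu\ast D_{1/\sqrt{1+\delta}}\big(\sum_i X_i\partial_i\mathcal{W}(g)\big)$ is controlled through $\mathcal{W}(g)\in M^1_{1,1}(\R^{2d})$, via Lemma \ref{lem:aux-W(G)} and \eqref{eq:ju1}, exactly as in Lemma \ref{sjosymbol}(iii). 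With that correction the remainder of your argument (uniform bounds of order $\mathrm{rel}(\Lambda)\|g\|_{M^1_2}^2$ for both norms, application of Theorem \ref{thm:gap-bds}, and $|\delta|\asymp|1-\alpha|$ on the relevant range of $\alpha$) goes through and coincides with the paper's proof.
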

While gaps in the spectrum of Gabor frame operators are comparatively less studied than their spectral edges, general gaps deserve attention. For example, the size of the smallest gap $(0,A)$ of a Gabor frame operator coincides with the so-called \emph{lower Riesz bound} of the Gabor system. Note however that Theorem \ref{thm4} does not describe the size of the smallest gap, as it is in principle possible that a spectral gap $(0,A)$ of $S_{g,\Lambda}$ may evolve into a gap of $S_{g,\alpha\Lambda}$ with $\sigma^g_-(S_{g,{\alpha \Lambda}})>0$ leaving room for a second gap 
$(0,A')$ in the spectrum of $S_{g,{\alpha \Lambda}}$ with $A' < \sigma^g_-(S_{g,{\alpha \Lambda}})$.

\subsection{Methods}\label{sec_m}
As our predecessors, the overall structure of our proof of Theorem \ref{thm:main-general} follows Bellissard~\cite{bel94} and consists of three steps: a truncation argument followed by tensorization, and a reverse heat flow estimate. 

A key insight of Bellissard was that the $C^\ast$-algebra 
$\mathcal{A}_{1+\delta}$ that is generated by
$\{\rho(\sqrt{1+\delta}k)\}_{k\in\Z^2}$ acting on $\lrd $  (a non-commutative torus)   is isomorphic
to a subalgebra of $\mathcal{A}_{1}\otimes \mathcal{A}_\delta$. For
non-periodic symbols  we  use a similar tensorization argument. Although we
can no longer  rely on $C^\ast$-algebra techniques,  this difficulty
is circumvented  with the help of  the  metaplectic
representation. See Theorem~\ref{thm:tensor} and its comments.

While the overall structure of the proof of Theorem \ref{thm:main-general} is due to Bellissard, the proof techniques are rather different in the case of non-periodic symbols and draw from time-frequency analysis and the theory of modulation spaces. In fact, our  main technical contribution  is the systematic use of the machinery of \tfa. 

Theorem \ref{thm:gap-bds} is proved by combining Theorem \ref{thm:main-general} with an abstract principle due to Beckus and Bellissard \cite{bebe16}, while Theorems \ref{blt} and \ref{thm4} follow as a further application of the main results.

After understanding the fundamental principles underlying the
continuous dependence of spectra, Beckus and Bellissard with coauthors
have axiomatized and greatly expanded the range of their
methods~\cite{bebeni18, beta21}. In \cite{bebeni18} generalized
Schr\"odinger operators are studied in the context of groupoids and
dynamical systems based on $C^*$-algebraic methods. The final result
is a far-reaching general theorem about the continuity of the spectral
map that includes, for instance, Schr\"odinger operators for solids
with respect to general point distributions in magnetic
fields. The corresponding magnetic translations obey commutation
relations similar to those of the time-frequency shifts studied in our
work. In \cite{beta21} the authors study the Lipschitz continuity of
spectra of operator families for which the mapping from parameter to operator $\alpha \to
T_\alpha$  is driven by a dynamical system. This generalization is
motivated by and includes the almost-Mathieu operator, which in our
context corresponds to a specific finite sum of time-frequency
shifts. Our results are clearly related and fit into this general
context, but to the best of our knowledge there is no overlap.

\medskip

The paper is organized as follows: Section~\ref{sec:tools} summarizes the tools from
\tfa\ required for the formulation and for the  proof of the main
theorem. Section~\ref{sec:proofs} is devoted to the proof of the main theorems. 
Section~\ref{sec:gabor} discusses Gabor systems and their frame operators. 

\section{Background and Tools}\label{sec:tools}

\subsection{Notation}
Euclidean balls are denoted $B_r(x)$. The dilation operator acts on a
function $f: \mathbb{R}^d \to \mathbb{C}$ by $D_a f(x) = f(ax)$,
$a>0$, and for $F: \mathbb{R}^d \times \mathbb{R}^d \to \mathbb{C}$,
$\mathcal{U}$ denotes the change of variables
$\mathcal{U}F(x,\omega)=F(\omega,-x)$. The tensor product of two
functions is defined as $f\otimes g(s,t)=f(s)g(t)$. The symbol
$\lesssim $ in $f \lesssim g$ means that $f(x) \leq C g(x)$ for all
$x$ with a constant $C$ independent of $x$. 

\subsection{Norms and spectral extrema}
An important insight in \cite{bel94,bebe16} is that the smoothness of \emph{the norms} of (the polynomial calculi of) a family of operators determines the smoothness of their spectra in the Hausdorff metric. We start with the following basic estimate, which we prove for completeness; these and other closely related statements are implicit in the proofs of \cite[Theorem 2.3.17 and Theorem 2.8.10]{beck-thesis}.
\begin{lemma}\label{lem:norm-bd-edges}
	Let $\H$ be  a Hilbert space, and $A,A_1 ,A_2 \in B(\H)$ be  self-adjoint operators. For $\lambda>\|A\|_{B(\H)}$, we have
	$\sigma_+(A)=  \|A+\lambda I\|_{B(\H)}-\lambda$, and $\sigma_-(A)=\lambda-  \|A-\lambda I\|_{B(\H)}.$
	Moreover,  
	\begin{equation}\label{eq:diff-of-norms0}
		\left|\sigma_\pm(A_1)-\sigma_\pm(A_2)\right|\leq \|A_1-A_2\|_{ B(\H)},
	\end{equation}
	and 
	\begin{equation}\label{eq:diff-of-norms}
		\big|\|A_1\|_{ B(\H)}-\|A_2\|_{ B(\H)}\big|\leq 2\cdot\max_{\pm}|\sigma_\pm(A_1)-\sigma_\pm(A_2)|.
	\end{equation}
\end{lemma}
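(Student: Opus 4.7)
The plan is to leverage two standard consequences of the spectral theorem for bounded self-adjoint operators: the identity $\|A\|_{B(\H)}=\max(|\sigma_+(A)|,|\sigma_-(A)|)$, and the spectral mapping $\sigma(A+\mu I)=\sigma(A)+\mu$ for real $\mu$. Both will be used repeatedly.

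First, I would derive the two closed-form expressions for $\sigma_\pm(A)$. For $\lambda>\|A\|_{B(\H)}$ one has $\sigma(A)\subset[-\|A\|_{B(\H)},\|A\|_{B(\H)}]\subset(-\lambda,\lambda)$, so $A+\lambda I$ is strictly positive with spectrum $\sigma(A)+\lambda\subset(0,2\lambda)$. Consequently
\[
\|A+\lambda I\|_{B(\H)} \;=\; \max \sigma(A+\lambda I) \;=\; \sigma_+(A)+\lambda,
\]
which rearranges to the first identity. The second follows analogously from the positivity of $\lambda I-A$, giving $\|A-\lambda I\|_{B(\H)}=\lambda-\sigma_-(A)$.

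For the Lipschitz bound \eqref{eq:diff-of-norms0}, I would pick any $\lambda>\max(\|A_1\|_{B(\H)},\|A_2\|_{B(\H)})$, apply the two identities to both $A_1$ and $A_2$, and invoke the reverse triangle inequality for the operator norm. Since $(A_1\pm\lambda I)-(A_2\pm\lambda I)=A_1-A_2$, the translates cancel and
\[
|\sigma_\pm(A_1)-\sigma_\pm(A_2)|\;=\;\bigl|\|A_1\pm\lambda I\|_{B(\H)}-\|A_2\pm\lambda I\|_{B(\H)}\bigr|\;\leq\;\|A_1-A_2\|_{B(\H)}.
\]

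For \eqref{eq:diff-of-norms}, I would use $\|A_i\|_{B(\H)}=\max(|\sigma_+(A_i)|,|\sigma_-(A_i)|)$. Choose the sign $s\in\{+,-\}$ that attains this maximum for, say, $A_1$; then
\[
\|A_1\|_{B(\H)}=|\sigma_s(A_1)|\leq |\sigma_s(A_2)|+|\sigma_s(A_1)-\sigma_s(A_2)|\leq \|A_2\|_{B(\H)}+\max_\pm|\sigma_\pm(A_1)-\sigma_\pm(A_2)|,
\]
and swapping the roles of $A_1$ and $A_2$ yields the desired inequality (in fact with constant $1$, so the factor $2$ in the statement is a safe bound rather than the sharp one). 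No step is a genuine obstacle; the proof reduces to algebraic manipulations of standard spectral identities, and the only care needed is to check that $\sigma_\pm(A)\in\sigma(A)$, which is immediate since $\sigma(A)$ is a closed bounded subset of $\R$.
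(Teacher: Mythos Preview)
Your proof is correct and follows essentially the same route as the paper: the identities for $\sigma_\pm(A)$ via the spectral radius of $A\pm\lambda I$, the reverse triangle inequality for \eqref{eq:diff-of-norms0}, and the representation $\|A_i\|_{B(\H)}=\max(|\sigma_+(A_i)|,|\sigma_-(A_i)|)$ for \eqref{eq:diff-of-norms}. Your direct argument in the last step in fact yields the sharper constant $1$, whereas the paper invokes the cruder bound $|\max\{a,b\}-\max\{c,d\}|\leq 2\max\{|a-c|,|b-d|\}$ and obtains the stated factor $2$.
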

\proof
For a  self-adjoint operator $A$, it holds that
$r(A)=\|A\|_{B(\H)}$, where $r(A)$ denotes the spectral radius
of $A$.  For $\lambda>\|A\|_{B(\H)}$ one thus has
$\sigma_+(A)+\lambda=r(A+\lambda I)=\|A+\lambda
I\|_{B(\H)}$. For $\sigma_-$ we note that $-(\sigma_-(A)-\lambda)=r(A-\lambda I)=\|A-\lambda
I\|_{B(\H)}$.

Set  $\lambda= 2\max\{\|A_1\|_{B(\H)},\|A_2\|_{B(\H)}\}$. By the first part of this lemma and the triangle inequality it follows 
\begin{align*}
	\left|\sigma_\pm(A_1)-\sigma_\pm(A_2)\right|&=\left|\|A_1\pm\lambda I\|_{B(\H)}-\|A_2\pm\lambda I\|_{B(\H)}\right|
	\\
	&\leq \|A_1\pm\lambda I -(A_2\pm\lambda I)\|_{B(\H)}= \|A_1 -A_2\|_{B(\H)},
\end{align*}
which proves \eqref{eq:diff-of-norms0}. To prove \eqref{eq:diff-of-norms} we first note that
\begin{align*}
	\big|\|A_1\|_{B(\H)}\hspace{-1pt}-\hspace{-1pt}\|A_2\|_{B(\H)}\big|\hspace{-1pt} =\hspace{-1pt}\big|\max\{|\sigma_+(A_1))|,\hspace{-1pt}|\sigma_-(A_1)|\}\hspace{-1pt}-\hspace{-1pt}\max\{|\sigma_+(A_2)|,\hspace{-1pt}|\sigma_-(A_2)|\}\big|.
\end{align*}
For $a,b,c,d\geq 0$, the elementary estimate $|\max\{a,b\}-\max\{c,d\}|\leq 2\max\{|a-c|,|b-d|\}$ then yields that
\begin{align*}
	\big|\|A_1\|_{B(\H)}-\|A_2\|_{B(\H)}\big|&\leq 2  \max_\pm\big||\sigma_\pm(A_1)|-|\sigma_\pm(A_2)|\big|
	\\
	&\leq 2 \max_\pm|\sigma_\pm(A_1)-\sigma_\pm(A_2)|. 
\end{align*}
\pbox

\subsection{The Beckus-Bellissard lemma}
Consider a family $\{A_\delta\}_{|\delta|<  \delta_0 }$
of bounded self-adjoint operators $A_\delta: \H \to \H$ on a Hilbert space $\H$. For a set of polynomials $\mathcal{Q} \subset \mathbb{C}[x]$ let
\begin{align*}
C_\mathcal{Q}:=\sup_{p\in\mathcal{Q}}\sup_{\delta_1\neq\delta_2}\frac{\big|\|p(A_{\delta_1})\|_{B(\H)}-\|p(A_{\delta_2})\|_{B(\H)}\big|}{|\delta_1-\delta_2|}.
\end{align*}
We say that $\{A_\delta\}_{\delta \in (-\delta_0,\delta_0)}$ is
$(p2)$-\emph{Lipschitz continuous} if for every $M>0$, $C_{\mathcal{P}_M}<\infty$, where $\mathcal{P}_M$ denotes the set of all polynomials of the form $p(x)=\alpha x^2+\beta x+\gamma$ with $\alpha,\beta,\gamma\in\R$ and $|\alpha|+|\beta|+|\gamma|\leq M$.

The following lemma is a minor modification of \cite[Lemma~10]{bebe16} (see also \cite[Theorem~2.8.10]{beck-thesis}).
\begin{lemma}\label{lem:beckus-bellissard}
	Let $\{A_\delta\}_{|\delta|<\delta_0}$ be a $(p2)$-Lipschitz continuous family of bounded self-adjoint operators and set 
	\begin{align}\label{eq_aaaa}
	\mathcal{P}(A_0)=\left\{ p(x)=x^2+\beta x+\gamma:
	\beta,\gamma\in \mathbb{R},
	\ |\beta|\leq 2\|A_0\|_{B(\H)},\ |\gamma|\leq 5\|A_0\|^2_{B(\H)} \right\}.
	\end{align}
	Let $g$ be a gap of the spectrum of $A_0$ with edges $\sigma_\pm^g(A_0)$ and
	length $L(g)$. Then there exist $\varepsilon=\varepsilon(g)>0$ and gaps of the spectrum of $A_\delta$, $|\delta|<\varepsilon$, whose edges $\sigma_\pm^g(A_\delta)$ satisfy
	\begin{equation}
		\big| \sigma_\pm^g(A_\delta)-\sigma_\pm^g(A_0)\big|\leq 3|\delta|\cdot\frac{C_{\mathcal{P}(A_0)}}{L(g)}, \qquad |\delta|<\varepsilon.
	\end{equation}
\end{lemma}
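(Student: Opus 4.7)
The plan is to exploit the family of gap-detecting polynomials $p_s(x)=(x-s)^2$, parameterized by $s\in\R$. The key observation is that for any bounded self-adjoint operator $A$,
\begin{equation*}
\sigma_-(p_s(A)) = \mathrm{dist}\bigl(s,\sigma(A)\bigr)^2,
\end{equation*}
since $\sigma(p_s(A)) = \{(t-s)^2: t\in\sigma(A)\}$. Combined with Lemma \ref{lem:norm-bd-edges}, which expresses $\sigma_-$ as a norm, this lets me translate the $(p2)$-Lipschitz hypothesis on polynomial norms $\|p(A_\delta)\|_{B(\H)}$ into a Lipschitz bound on $\mathrm{dist}(s,\sigma(A_\delta))^2$ as a function of $\delta$. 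Concretely, fixing $\lambda$ larger than $\sup_{|\delta|<\delta_0}\|p_s(A_\delta)\|_{B(\H)}$, Lemma \ref{lem:norm-bd-edges} yields $\sigma_-(p_s(A_\delta))=\lambda-\|p_s(A_\delta)-\lambda I\|_{B(\H)}$, and the Lipschitz continuity of $\delta\mapsto\|p_s(A_\delta)-\lambda I\|_{B(\H)}$, with constant $C_{\mathcal{P}(A_0)}$, passes directly to $\sigma_-(p_s(A_\delta))$.

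I would then apply this machinery in two stages. First, take $s=c:=(\sigma_-^g(A_0)+\sigma_+^g(A_0))/2$, the midpoint of $g$. Then $\sigma_-(p_c(A_0))=(L(g)/2)^2$, so for $|\delta|<\varepsilon$ small the Lipschitz bound keeps $\sigma_-(p_c(A_\delta))$ close to $(L(g)/2)^2$, showing $A_\delta$ has no spectrum in a shrunken interval around $c$. Repeating the same polynomial argument with shifts $s$ slightly to the left of $\sigma_-^g(A_0)$ and slightly to the right of $\sigma_+^g(A_0)$ exhibits spectral points of $A_\delta$ near each original edge, so $g$ persists as a gap of $A_\delta$ with well-defined edges $E_\pm^g(\delta)$ close to $E_\pm^g(0)$. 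Second, to extract the sharp $L(g)^{-1}$ dependence, I choose $s$ in the gap near the edge in question: for $E_+^g$, set $s=\sigma_+^g(A_0)-L(g)/4$. Then $\sigma_-(p_s(A_0))=(L(g)/4)^2$ is attained uniquely at $\sigma_+^g(A_0)$, and for $\varepsilon$ small enough, $\sigma_-(p_s(A_\delta))=(E_+^g(\delta)-s)^2$ is attained at $E_+^g(\delta)$. Factoring
\begin{equation*}
(E_+^g(\delta)-s)^2-(E_+^g(0)-s)^2 = \bigl(E_+^g(\delta)-E_+^g(0)\bigr)\bigl(E_+^g(\delta)+E_+^g(0)-2s\bigr),
\end{equation*}
and bounding the second factor below by $L(g)/3$ via a short bootstrap (shrinking $\varepsilon$ so that $|E_+^g(\delta)-E_+^g(0)|\leq L(g)/6$) produces the factor $3/L(g)$. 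The argument for $E_-^g$ is symmetric.

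The main technical obstacle is to verify that every polynomial invoked lies in the class $\mathcal{P}(A_0)$; the explicit bounds $|\beta|\leq 2\|A_0\|_{B(\H)}$, $|\gamma|\leq 5\|A_0\|^2_{B(\H)}$ in \eqref{eq_aaaa} are tailored precisely for this step. With $s$ constrained to $[\sigma_-^g(A_0),\sigma_+^g(A_0)]\subset[-\|A_0\|_{B(\H)},\|A_0\|_{B(\H)}]$ and the choice $\lambda=5\|A_0\|^2_{B(\H)}$, the shifted polynomial $p_s(x)-\lambda=x^2-2sx+(s^2-\lambda)$ satisfies $2|s|\leq 2\|A_0\|_{B(\H)}$ and $|s^2-\lambda|\leq 5\|A_0\|^2_{B(\H)}$, placing it in $\mathcal{P}(A_0)$; moreover $\lambda$ exceeds $\|p_s(A_\delta)\|_{B(\H)}\leq(\|A_\delta\|_{B(\H)}+|s|)^2$ once $\varepsilon$ is shrunk to keep $\|A_\delta\|_{B(\H)}$ close to $\|A_0\|_{B(\H)}$. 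This coefficient bookkeeping, rather than any spectral geometry, is where the argument is most delicate.
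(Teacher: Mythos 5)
Your proposal is correct. Note that the paper itself does not prove this lemma: it defers to Beckus--Bellissard \cite[Lemma 10]{bebe16} (see also \cite[Theorem 2.8.10]{beck-thesis}), remarking only that the argument there already yields the version with the smaller polynomial class $\mathcal{P}(A_0)$. What you have written is essentially a faithful reconstruction of that argument: the spectral mapping identity $\sigma_-\bigl((A-s)^2\bigr)=\mathrm{dist}(s,\sigma(A))^2$, converted to a norm via Lemma \ref{lem:norm-bd-edges}, a first pass with $s$ at the midpoint and at the edges to get persistence of the gap and a rough (H\"older-$1/2$) localization of the new edges, and then the choice $s=\sigma_+^g(A_0)-L(g)/4$ with the factorization of the difference of squares to upgrade to the Lipschitz bound with constant $3/L(g)$; the coefficient bookkeeping with $\lambda=5\|A_0\|^2_{B(\H)}$ is exactly what the bounds in \eqref{eq_aaaa} are designed for. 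The only wording to fix: in the persistence step you should take $s$ \emph{equal to} the edges $\sigma_\pm^g(A_0)$ (where $\mathrm{dist}(s,\sigma(A_0))=0$ already forces spectrum of $A_\delta$ within $\sqrt{C_{\mathcal{P}(A_0)}|\delta|}$ of each edge), rather than ``slightly to the left/right'' of them, since points outside the closed gap need not satisfy $|s|\leq\|A_0\|_{B(\H)}$; this is consistent with the constraint $s\in[\sigma_-^g(A_0),\sigma_+^g(A_0)]$ you impose in your final paragraph.
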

The statement of Lemma~10 in \cite{bebe16} involves a larger class than $\mathcal{P}(A_0)$ defined by imposing the same bound on all polynomial coefficients. The proof in \cite{bebe16}, however, readily gives the stronger statement, and will not be repeated.

\subsection{Time-frequency representations}
We now offer a minimalist account of \tfa , modulation spaces, and
the associated results for pseudodifferential operators.
Detailed expositions can be found in the textbook~\cite{groe1} and the two
recent monographs~\cite{BO20,CRbook}.  

For a point $z=(x,\omega)\in \R^{2d}$,  the \emph{phase-space shift}
(\tfs ) of $f$ is defined as
$$
\rho (z)f(t)=e^{-i\pi x\cdot \omega } M_\omega T_xf(t)=e^{-i\pi x\cdot
  \omega } \, e^{2\pi i \omega\cdot t}f(t-x),
$$
where $T_xf(t)=f(t-x)$ and $M_\omega f(t)=e^{2\pi i \omega\cdot t}f(t)$.
In terms of the \emph{symplectic form}
\begin{align}\label{eq_symplectic}
[z,z'] = x'\cdot \omega - x\cdot \omega', \qquad z=(x,\omega), z'=(x',\omega') \in \mathbb{R}^d \times \mathbb{R}^d,
\end{align}
the composition of two phase-space shifts gives
\begin{align}\label{eq_x}
\rho(z)\rho(z')=e^{i\pi[z,z']} \rho(z+z'),
\qquad z,z'\in\mathbb{R}^{2d}.
\end{align}
In particular $\rho(z)^*=\rho(-z)$ and
\begin{align}\label{eq_x2}
\rho(z)^*\rho(z')\rho(z)=e^{2\pi i [z',z]} \rho(z'),
\qquad z,z'\in\mathbb{R}^{2d}.
\end{align}
The \emph{short-time Fourier transform} of a function or distribution
$f$ on $\R^d$  with respect to a window function  $g$ is given by
\begin{align*}
V_gf(x,\omega)&=\int_{\R^d}f(t)\overline{g(t-x)}e^{-2\pi i \omega\cdot t}dt\\
&=\langle f,M_\omega T_xg\rangle=e^{-i\pi x\cdot \omega } \, \langle f,\rho (z)g\rangle .
\end{align*}
When $g$ is normalized by $\norm{2}{g}=1$, $V_g:L^2(\mathbb{R}^d) \to L^2(\mathbb{R}^{2d})$ is an isometry ~\cite{fo89,groe1}: 
\begin{align}\label{eq_iso}
\int_{\rdd } |\langle f, \rho(z)g\rangle |^2 = \int_{\rdd } |V_g f(z)|^2 \, dz = \|f\|_2^2, \qquad f \in L^2(\mathbb{R}^d).
\end{align}
In terms of the rank-one projections $q(z) \in B(L^2(\R^d))$,
\begin{align}\label{eq_q}
q(z)=\langle\hspace{0.1cm} \cdot\hspace{0.05cm}, \rho(z)g \rangle  \rho(z) g,
\end{align}
the isometry property of the \stft\, yields the following continuous resolution of the identity:
\begin{equation}
\label{eq:c4}
\int _{\rdd} q(z) \, dz = I,
\end{equation}
where integrals are to be interpreted in the weak sense \eqref{eq_iso}.

\emph{Identities for the short-time Fourier transform.} We will  need  the following identity for the short-time Fourier transform of a pointwise product of functions
 \begin{equation}\label{eq:stft-prod}
 V_g(f\cdot h)(x,\omega)=\big(\widehat{h}\ast_2 V_gf\big)(x,\omega) = \int_{\R^d}\widehat{h}(\xi)V_gf(x,\omega-\xi)d\xi \, .
\end{equation}
The \emph{(cross-) Wigner distribution} of $f,g\in L^2(\R^d)$ is defined to be 
\begin{align}\label{eq_wigner}
\mathcal{W}(f,g)(x,\omega)=\int_{\R^d}f\Big(x+\frac{t}{2}\Big)\overline{g\Big(x-\frac{t}{2}\Big)}e^{-2\pi i \omega\cdot t}dt.
\end{align}
If $f=g$, we write $\mathcal{W}(f)$.

Finally we quote some useful facts about the \stft\ of a \stft . It
is shown in \cite[Lemmas 2.1 and 2.2, Proposition 2.5]{cogroe03}
that~\footnote{The combination $\mathcal{U} ^{-1}  \mathcal{F} $ is often called the symplectic Fourier
  transform and used in addition to $\mathcal{F} $.}
\begin{equation}\label{eq:fourier-prod-stft}
\mathcal{F}\big(V_\varphi f \cdot\overline{V_\varphi
  g}\big)(z)=\mathcal{U}^{-1}\big(V_gf\cdot\overline{V_\varphi\varphi}\big)(z),
\qquad z\in \rdd \, ,
\end{equation}
while with $x,\omega \in \rdd $,
$(\tilde{\omega}_1,\tilde{\omega}_2) = (\omega_2,-\omega_1)$, and the window
$\Phi=W(\varphi,\varphi)$, 
\begin{equation}
\label{eq_hhh}
|V_\Phi(\mathcal{W}(f,g))(x,\omega)|=|V_\varphi f(x-\tilde{\omega}/2)V_\varphi
g(x+\tilde{\omega}/2)|\,.
  \end{equation}

\subsection{Modulation spaces}  The family of \modsp s $M^{p,q}_{s,t}(\R^d)$
was already defined in \eqref{def:modul}. By changing the order of
integration, we obtain the family of \emph{Wiener amalgam spaces}.
Let $\varphi(t)=2^{d/4}e^{-\pi |t|^2}$ denote the standard Gaussian in
$\R^d$ and $ 1\leq p,q\leq\infty,\ s,t\geq
0$. Then     $W^{p,q}_{s,t}(\R^d)$ consists  of all distributions in $\mathcal{S}'(\R^d)$ for which
the following norm is finite
\begin{equation}\label{eq:wiener}
\|f\|_{W^{p,q}_{s,t}}=\left(\int_{\R^d}\left(\int_{\R^d}|V_\varphi f(x,\omega)|^p(1+|\omega|)^{sp}d\omega \right)^{q/p}(1+|x|)^{tq}dx\right)^{1/q},
\end{equation}
with the usual modification when $p=\infty $, or
 $q=\infty$.  If $p=q$, we write $W^p_{s,t}(\R^d)$,  if  $s=t$, we write $W^{p,q}_{s}(\R^d)$, and if $s=t=0$, we write $W^{p,q}(\R^d)$. 
Any nonzero function $g\in \mathcal{S}(\R^d)$ instead of the Gaussian in \eqref{eq:wiener} gives an equivalent norm, i.e. $\|f\|_{W^{p,q}_{s,t}}\asymp \|\mathcal{U}V_g f\|_{L^{p,q}_{s,t}}$; these spaces are often denoted by $W(\mathcal{F}L^p_s,L^q_t)$.

Since $V_gf(x,\omega ) = e^{-2\pi i x\cdot \omega } V_{\widehat{g}}\widehat{f}(\omega ,-x)$, a
comparison of \eqref{def:modul} and \eqref{eq:wiener} shows that   $W^{p,q}_{s,t}(\R^d)$ is the image of the  modulation space $M^{p,q}_{s,t}(\R^d)$ under the Fourier transform, in particular 
\begin{equation}
  \label{eq:c6}
\|f\|_{M^{p,q}_{s,t}} \asymp \big\|\widehat{f}\, \big\|_{W^{p,q}_{s,t}}.
\end{equation}

\emph{Convolution and multiplication in \modsp s.} 
The space $M^{\infty,1}$ is isometrically translation invariant. As a consequence, it satisfies $L^1 * M^{\infty,1} \to M^{\infty,1}$ together with the estimate
 \begin{equation}
	\label{eq:c7p}
	\| f \ast g\|_{M^{\infty,1}} \lesssim \|f \|_{L^{1} } \,
	\|g\|_{M^{\infty,1}}.
\end{equation}
Equivalently, in terms of Wiener amalgam norms,
\begin{equation}
	\label{eq:c8}
	\| f\cdot g\|_{W^{\infty,1}} \lesssim \|f\|_{\mathcal{F} L^1} \,
	\|g\|_{W^{\infty,1}},
\end{equation}
where \[\|f\|_{\mathcal{F} L^1} :=  \| \widehat{f}\,  \|_{L^1}.\]
We will also use the following estimates for convolution of functions and distributions in \modsp s, taken from \cite[Prop. 2.4]{cogroe03}.
  If $f\in M^\infty(\R^d) $ and $g\in M^1_{0,s}(\R^d)$ with $s\geq 0$, then $f \ast g \in M^{\infty,1}_{0,s}(\R^d)$ and
 \begin{equation}
 \label{eq:c7}
 \| f \ast g\|_{M^{\infty,1}_{0,s}} \lesssim \|f \|_{M^\infty } \,
 \|g\|_{M^{1}_{0,s}}\, .
 \end{equation}
Let us write $X_i$ to denote the multiplication operator $X_i f(t)=t_i f(t),\ 1\leq i\leq d$. The observation
$$
X_i (M_\omega T_xf)=M_\omega T_x (X_i f )+x_i M_\omega T_xf,
$$
and the fact that different windows generate equivalent norms for
$M^{p,q}_{s,t}(\R^d)$ and $W^{p,q}_{s,t}(\R^d)$ lead to the estimates
\begin{align}\label{eq:X-implies-weight}
 \| X_if \|_{M^{p,q}_{s,t}}\lesssim   \|f  \|_{M^{p,q}_{s+1,t}},\quad \text{and}\quad
 \| X_if \|_{W^{p,q}_{s,t}}\lesssim   \|f  \|_{W^{p,q}_{s,t+1}}.
\end{align}
Similarly, one sees that
$$
\| \partial _i f \|_{M^{p,q}_{s,t}}\lesssim   \|f
\|_{M^{p,q}_{s,t+1}} \, .
$$
Consequently,
\begin{equation}
  \label{eq:ju1}
  \|X_i \partial _if \|_{M^{p,q}_{s,t}}\lesssim   \|f
  \|_{M^{p,q}_{s+1,t+1}} \, .
\end{equation}
Next, we show the following variant of \cite[Proposition~2.5]{cogroe03}.
\begin{lemma}\label{lem:aux-W(G)}
For $f,g\in M^1_{s+t}(\rd )$,  $s,t\geq 0$, we have $\mathcal{W}(f,g)
\in M^1_{s,t}(\rdd )$ with the norm estimate 
$$
\|\mathcal{W}(f,g)\|_{M_{s,t}^1}\lesssim \|f\|_{M^1_{s+t}}\|g\|_{M_{s+t}^1}.
$$
\end{lemma}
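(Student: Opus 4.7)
\proof[Proof plan]
My plan is to reduce the statement to a weight estimate on the short-time Fourier transforms of $f$ and $g$ via the identity \eqref{eq_hhh}, which I regard as the workhorse here. Since $\Phi = \mathcal{W}(\varphi,\varphi)$ is (up to a constant) a Gaussian on $\R^{2d}$, it lies in $\mathcal{S}(\R^{2d})$ and defines an equivalent norm on $M^1_{s,t}(\R^{2d})$. Writing the modulation space norm with window $\Phi$ and using \eqref{eq_hhh}, I obtain
\[
\|\mathcal{W}(f,g)\|_{M^1_{s,t}} \asymp \int_{\R^{2d}}\int_{\R^{2d}} |V_\varphi f(x-\tilde\omega/2)|\,|V_\varphi g(x+\tilde\omega/2)|\,(1+|x|)^s(1+|\omega|)^t\,dx\,d\omega,
\]
where $x,\omega\in\R^{2d}$.

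Next I would change variables $u = x - \tilde\omega/2$, $v = x + \tilde\omega/2$, so $x = (u+v)/2$ and $\tilde\omega = v-u$; the Jacobian is $1$ and $|\tilde\omega| = |\omega|$. This converts the integrand's weight into $(1+|(u+v)/2|)^s(1+|v-u|)^t$. Two applications of the triangle inequality give
\[
(1+|(u+v)/2|)^s(1+|v-u|)^t \leq (1+|u|+|v|)^{s+t} \leq (1+|u|)^{s+t}(1+|v|)^{s+t}.
\]
Writing $u = (u_1,u_2), v=(v_1,v_2) \in \R^d\times\R^d$ and using $(1+|u|) \leq (1+|u_1|)(1+|u_2|)$ (likewise for $v$), the integral decouples into the product
\[
\Bigl(\int_{\R^{2d}} |V_\varphi f(u_1,u_2)|(1+|u_1|)^{s+t}(1+|u_2|)^{s+t}\,du\Bigr)\Bigl(\int_{\R^{2d}} |V_\varphi g(v_1,v_2)|(1+|v_1|)^{s+t}(1+|v_2|)^{s+t}\,dv\Bigr),
\]
which, by definition \eqref{def:modul}, is precisely $\|f\|_{M^1_{s+t}}\|g\|_{M^1_{s+t}}$ (up to the equivalence constant for the Gaussian window).

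There is no real obstacle here beyond carefully tracking the ambient dimension: the arguments of $V_\Phi \mathcal{W}(f,g)$ live in $\R^{2d}\times\R^{2d}$ while those of $V_\varphi f, V_\varphi g$ live in $\R^{d}\times\R^{d}$. The only mildly delicate step is the weight juggling that passes from the $(s,t)$-weight on $(x,\omega)$ to a pure $(s+t)$-weight on each component variable; this is slightly lossy, which is consistent with the fact that the lemma is a product-type, rather than tensor-type, inequality.
\qed
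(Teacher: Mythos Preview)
Your proof is correct and follows essentially the same route as the paper's: both use the identity \eqref{eq_hhh} with the Gaussian window $\Phi=\mathcal{W}(\varphi,\varphi)$, perform the same linear change of variables to decouple $V_\varphi f$ and $V_\varphi g$, and then invoke submultiplicativity of the polynomial weights. The only cosmetic difference is that you pass through the intermediate bound $(1+|u|+|v|)^{s+t}$ before factoring, whereas the paper bounds the $s$- and $t$-weights separately; the resulting estimates are identical.
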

\proof
Using \eqref{eq_hhh} with $\Phi=\mathcal{W}(\varphi,\varphi)$
and the change of variables $(\tilde{\omega}_1,\tilde{\omega}_2) = (\omega_2,-\omega_1)$,
\begin{align*}
  \|\mathcal{W}(f,g)\|_{M_{s,t}^1} &\asymp \int_{\rdd } \int _{\rdd }
                                     |V_\Phi
                                     (\mathcal{W}(f,g))(x,\omega )|
                                     (1+|x|)^s(1+|\omega|)^t dxd\omega \\
  &= \int_{\rdd}\int_{\rdd}|V_\varphi f(x-\omega/2)V_\varphi g(x+\omega/2)|(1+|x|)^s(1+|\omega|)^t dxd\omega
\\
&=\int_{\rdd}\int_{\rdd}|V_\varphi f(x)V_\varphi g(x+\omega)|(1+|x+\omega/2|)^s(1+|\omega|)^tdxd\omega
\\
&=\int_{\rdd}\int_{\rdd}|V_\varphi f(x)V_\varphi g(\omega)| \, (1+|(x+\omega)/2|)^s(1+|\omega-x|)^{t}d\omega dx
\\
&\lesssim \|  f\|_{M^1_{s+t}}\|g\|_{M^1_{s+t}},
\end{align*}
where we used the submultiplicativity of the polynomial weights.\pbox

As we note below, the space $M^\infty(\R^d)$ contains atomic measures supported on relatively separated sets.
\begin{lemma}\label{lemma_mu}
	Let $\Lambda \subset \mathbb{R}^{d}$ be relatively separated. Then $\mu := \sum_{\lambda \in \Lambda} \delta_\lambda \in M^{\infty}(\mathbb{R}^d)$. Moreover, if $\emph{rel}(\Lambda):=\sup_{x\in\R^{d}}\#\{\lambda\in\Lambda\cap x+[0,1]^{d}\}$ then
	$$
	\|\mu\|_{M^\infty}\lesssim \emph{rel}(\Lambda).
	$$
\end{lemma}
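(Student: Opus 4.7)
The plan is to directly estimate the short-time Fourier transform of $\mu$ with the Gaussian window $\varphi$, since $\|\mu\|_{M^\infty} \asymp \|V_\varphi \mu\|_{L^\infty(\R^{2d})}$.

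First I would compute
\[
V_\varphi \mu(x,\omega) = \sum_{\lambda \in \Lambda} \overline{\varphi(\lambda - x)}\,e^{-2\pi i \omega \cdot \lambda},
\]
and take absolute values to get the pointwise bound
\[
|V_\varphi \mu(x,\omega)| \leq \sum_{\lambda \in \Lambda} |\varphi(\lambda - x)|,
\]
uniformly in $\omega$. The problem thus reduces to bounding this sum uniformly in $x$ by a constant multiple of $\mathrm{rel}(\Lambda)$.

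Next I would tile $\R^d$ by the unit cubes $Q_k = k + [0,1]^d$, $k \in \Z^d$. By definition of $\mathrm{rel}(\Lambda)$, each cube contains at most $\mathrm{rel}(\Lambda)$ points of $\Lambda$, hence
\[
\sum_{\lambda \in \Lambda} |\varphi(\lambda - x)| \leq \mathrm{rel}(\Lambda) \sum_{k \in \Z^d} \sup_{y \in Q_k} |\varphi(y - x)|.
\]
Because $\varphi$ is a Schwartz function, the quantity $\sum_{k \in \Z^d} \sup_{y \in Q_k} |\varphi(y - x)|$ (which is essentially the $W(L^\infty, \ell^1)$-norm of $\varphi$, up to translation) is finite and independent of $x \in \R^d$; indeed one may compare it, for instance, with $\sum_{k} (1 + |k - x|)^{-(d+1)} \lesssim 1$ via the rapid decay of $\varphi$.

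Combining these bounds yields $\|V_\varphi \mu\|_{L^\infty(\R^{2d})} \lesssim \mathrm{rel}(\Lambda)$, which gives $\mu \in M^\infty(\R^d)$ with the claimed norm estimate. I do not expect a real obstacle here: the statement is a standard coorbit/Wiener-amalgam type fact, and the only care needed is in verifying that the definition of $\mathrm{rel}(\Lambda)$ based on unit cubes aligns cleanly with the covering argument, which it does.
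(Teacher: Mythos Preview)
Your argument is correct. The paper omits the proof but hints at a slightly cleaner route: use a window $g\in M^1(\R^d)$ that is compactly supported in $[0,1]^d$ (recall that different windows in $M^1$ give equivalent $M^\infty$-norms). With such a $g$, the sum $\sum_{\lambda}\overline{g(\lambda-x)}e^{-2\pi i\omega\cdot\lambda}$ has at most $\mathrm{rel}(\Lambda)$ nonzero terms for each $x$, so $|V_g\mu(x,\omega)|\leq \mathrm{rel}(\Lambda)\,\|g\|_\infty$ immediately, without any tiling or tail estimate. Your Gaussian-window approach trades this immediacy for the additional (but routine) Wiener-amalgam bound $\sup_x\sum_{k\in\Z^d}\sup_{y\in Q_k}|\varphi(y-x)|<\infty$; it has the minor advantage of not needing the window-change lemma and of working directly with the canonical Gaussian used elsewhere in the paper.
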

The proof of Lemma \ref{lemma_mu} follows from a direct calculation (which is easily carried out by taking a window function $g\in M^1(\R^d)$ supported on $[0,1]^d$) and is therefore omitted.

Finally, we will need the embeddings
\begin{equation}
 \label{eq:L-infty-M-infty} 
 \|f\|_{M^\infty}\lesssim \|f\|_{L^\infty}\lesssim \|f\|_{M^{\infty,1}},    
\end{equation} 
and the following norm estimates for dilations on modulation spaces:
\begin{align}\label{eq_c}
	&\|D_a f\|_{M^{\infty,1}_{0,s}} \leq C_{d,s}  \max\big\{1,a^{d+s}\big\} \|f\|_{M^{\infty,1}_{0,s}}, \qquad
	a>0,\ s\geq0 \, ,
	\\
	\label{eq_cx}
	&\|D_a f\|_{M^{1}_{0,s}} \leq C_{d,s}  \max\big\{a^{-d},a^{s}\big\} \|f\|_{M^{1}_{0,s}}, \qquad
	a>0,\ s\geq0 \, .
\end{align}
See \cite[Theorem~1.1]{suto07} and \cite[Theorem 3.2]{coou12} for the weighted versions.

\subsection{Pseudodifferential operators} The \modsp s $M^{\infty
  ,1}_{0,s}(\rdd ),\ s\geq 0$, are important symbol classes in the
theory of pseudodifferential operators. In particular, the space  $M^{\infty
  ,1}(\R^{2d})$ was first used by Sj\"ostrand~\cite{Sjo94} as a class of
non-smooth (``rough'') symbols that contains the H\"ormander class
$S^{0}_{0,0}$. See also the early papers \cite{Gro06,GR06} for a
detailed \tfa\  of this symbol class.

The standard definition of the Weyl calculus~\eqref{eq:c0} does not reveal
how \modsp s and phase-space methods enter the analysis.  This becomes
more plausible when we write a pseudodifferential operator as
$$
\langle \sigma ^w f , g\rangle   = \langle \sigma , \mathcal{W}(g,f)
\rangle
$$
or as a superposition of phase-space shifts
$$\sigma ^w = \int _{\rdd } \mathcal{U} \widehat{\sigma }(z) \rho (z) \, dz
$$
with $\mathcal{U} F(x,\omega ) = F(\omega ,-x)$.
Taking these formulas for the
Weyl calculus as the starting point, the appearance of \modsp s is
natural and ultimately led to the following results, which we will use
in an essential way. 

The composition of Weyl transforms defines a bilinear form on the space of symbols \emph{(twisted product)}
$$
\sigma^w\tau^w=(\sigma\, \sharp \, \tau)^w.
$$
Using  the symplectic form \eqref{eq_symplectic},
the twisted product of Schwartz class symbols can be written explicitly as
\begin{align}\label{eq_tp}
\sigma\, \sharp \, \tau(z) = 4^d \int_{\mathbb{R}^{2d}}\int_{\mathbb{R}^{2d}} \sigma(z') \tau(z'') e^{4 \pi i [z-z',z-z'']} \, dz' dz'',
\end{align}
while for general $\sigma$ and $\tau$ this formula holds in the distributional sense. The \emph{twisted convolution}
\begin{align*}
\sigma \,\natural\, \tau (z) = \int_{\mathbb{R}^{2d}} \sigma(z') \tau(z-z') e^{-\pi i [z-z',z']} \, dz'
\end{align*}
is related to the twisted product by
\begin{align}\label{eq_tpc}
\mathcal{F} \big( \sigma\, \sharp \, \tau \big) = \big(\mathcal{F} \sigma \big) \, \natural \, \big(\mathcal{F} \tau\big).
\end{align}
We now quote some basic properties of weighted Sj\"ostrand classes.
\begin{theorem}\label{lem:bdd}
(i) If $\sigma \in M^{\infty,1}(\rdd )$, then $\sigma $ is a bounded  operator on
$\lrd $ and
$$
\|\sigma ^w\|_{B(L^2(\R^d))} \lesssim \|\sigma
\|_{M^{\infty ,1}} \, .
$$

(ii) If $F\in W^{\infty ,1}(\R^{2d})$ and $\rho (F ) = \int _{\rdd } F(z) \rho
(z) \, dz $, then $\rho (F)$ is bounded on $\lrd $ with operator norm
$\|\rho (F) \|_{B(\lrd )} \lesssim \|F\|_{W^{\infty ,1}}$.

(iii) For $s \geq 0$, $M_{0,s}^{\infty,1}(\rdd)$ is a Banach $\ast$-algebra with respect to the twisted product $\sharp$ and the involution $\sigma\mapsto\overline{\sigma}$. In particular, $\|\sigma\, \sharp \, \tau\|_{M_{0,s}^{\infty,1}}\lesssim\|\sigma\|_{M_{0,s}^{\infty,1}}\|\tau\|_{M_{0,s}^{\infty,1}}$.

(iv)   Let $|\delta|<\delta_0<1$, $G_\delta\in
M^{\infty,1}(\R^{2d})$  be real-valued, and set $T_\delta =
(D_{\sqrt{1+\delta }}G_\delta )^w$. 
Then 
\begin{align}\label{eq_a}
\|T_\delta\|_{B(L^2(\R^d))}\lesssim  \max\{1,( 1+\delta )^{ {d} }\}
\|G_\delta\|_{M^{\infty,1}}\leq ( 1+\delta_0)^{ {d} }
\| {G_\delta}\|_{M^{\infty,1}}. 
\end{align}
\end{theorem}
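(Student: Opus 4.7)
The plan is to treat (i)--(iii) as by-now-classical facts about the Sj\"ostrand class (originally due to Sj\"ostrand \cite{Sjo94,Sjo95} and Gr\"ochenig \cite{Gro06,GR06}) and to derive (iv) as a direct consequence of (i) combined with the dilation bound \eqref{eq_c}. The whole proof amounts essentially to bookkeeping with the time-frequency tools already assembled in Section~\ref{sec:tools}.

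For part (i), I would present the proof in its time-frequency guise. Starting from $\langle \sigma^w f, g\rangle = \langle \sigma, \mathcal{W}(g,f)\rangle$ and expanding both sides in a \stft\ with window $\Phi=\mathcal{W}(\varphi,\varphi)$, identity \eqref{eq_hhh} converts the pairing into an integral of $|V_\Phi\sigma(x,\omega)|\cdot|V_\varphi f(\cdot)\,V_\varphi g(\cdot)|$ over phase space. Taking $\esssup$ in $x$ of $|V_\Phi\sigma|$ and integrating in $\omega$ produces $\|\sigma\|_{M^{\infty,1}}$, while Cauchy--Schwarz in the remaining variable together with the isometry \eqref{eq_iso} accounts for the $f,g$ factors, yielding $|\langle\sigma^w f,g\rangle|\lesssim \|\sigma\|_{M^{\infty,1}}\|f\|_2\|g\|_2$. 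Part (ii) follows immediately by writing $\rho(F)=\sigma^w$ with $\sigma=\mathcal{F}^{-1}\mathcal{U}^{-1}F$: by \eqref{eq:c6} and the $\mathcal{U}$-invariance of the Wiener amalgam norm we have $\|\sigma\|_{M^{\infty,1}}\asymp \|F\|_{W^{\infty,1}}$, and (i) does the rest.

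For (iii), I would use \eqref{eq_tpc} together with the Fourier identification $M^{\infty,1}_{0,s}\leftrightarrow W^{\infty,1}_{0,s}$ to reduce the algebra property of $\sharp$ to the corresponding submultiplicativity of twisted convolution $\natural$ on $W^{\infty,1}_{0,s}$. The latter follows from the unweighted convolution estimate on Wiener amalgam spaces combined with the submultiplicativity $(1+|z|)^s\leq (1+|z-z'|)^s(1+|z'|)^s$ of the polynomial weight in the defining integral; the phase factor $e^{-\pi i[z-z',z']}$ does not affect the absolute value. The involution property is immediate, since $\overline{\sigma}$ is the Weyl symbol of $(\sigma^w)^*$ and complex conjugation is an isometry on every $M^{\infty,1}_{0,s}$.

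Finally, (iv) follows in two lines: by (i),
\[
\|T_\delta\|_{B(L^2(\R^d))}\lesssim \|D_{\sqrt{1+\delta}}G_\delta\|_{M^{\infty,1}},
\]
and then \eqref{eq_c} applied in phase-space dimension $2d$ with $a=\sqrt{1+\delta}$ gives $\|D_{\sqrt{1+\delta}}G_\delta\|_{M^{\infty,1}}\leq C_d\max\{1,(1+\delta)^d\}\|G_\delta\|_{M^{\infty,1}}$, which for $|\delta|<\delta_0<1$ is controlled by $(1+\delta_0)^d\|G_\delta\|_{M^{\infty,1}}$. The only real obstacle is the weighted algebra estimate in (iii): the $s=0$ case is the original Sj\"ostrand algebra result, but the refinement for $s>0$ requires carefully tracking the polynomial weight through the twisted convolution --- all other parts amount to short bookkeeping.
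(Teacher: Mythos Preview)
Your proposal is correct and aligns with the paper's own treatment: the paper simply cites \cite{Sjo94,Gro06,GR06} for (i) and (iii), notes that (ii) is a reformulation of (i) via the spreading representation $\sigma^w=\int\mathcal{U}\widehat{\sigma}(z)\rho(z)\,dz$, and derives (iv) from (i) together with the dilation bound \eqref{eq_c} in phase-space dimension $2d$. Your sketches for (i)--(iii) merely unpack the cited arguments in more detail than the paper does, and your derivation of (iv) is identical to the paper's.
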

\proof For (i) and (iii), see \cite{Gro06,Sjo94,GR06}. Item (ii) is just a
reformulation when the operator is written as a superposition of
phase-space shifts.  (iv) follows from the
invariance of $M^{\infty ,1}(\R^{2d})$ under dilations expressed by
\eqref{eq_c}. Note that symbols are functions on $\rdd $ and the
correct norm of the dilation is therefore $\sqrt{1+\delta } ^{2d} $. \pbox

\section{Proof of the Main Results}\label{sec:proofs}

In our proof of Theorems~\ref{thm:main-general}  we  follow Bellissard's
strategy ~\cite{bel94} consisting of three main ingredients: (i) truncate the symbol of $T_\delta$ to define an operator $\mathcal{T}_R(T_\delta)$ and    compare its spectral extreme values  to the ones of  $T_\delta$ (Lemma~\ref{lem:trunc-error}), (ii)  introduce a tensorization $ \mathcal{T}_R(T_\delta)^\otimes$ acting on $L^2(\R^{2d})$ which preserves the spectrum of $\mathcal{T}_R(T_\delta)$ (Theorem~\ref{thm:tensor}), and (iii) rewrite $ \mathcal{T}_R(T_\delta)^\otimes$ adequately (Lemma~\ref{lemma_new}) so that reverse
heat-flow estimates can be used to compare the spectral extreme values of $ \mathcal{T}_R(T_\delta)^\otimes$ and $\mathcal{T}_R(T_0)$ (Lemmas~\ref{lem:Sd-Td_times} and \ref{lem:Sd-T0}). 

The challenge in our case is to find suitable
alternative arguments to treat non-periodic symbols. As a first step,
we write the pseudodifferential operator $T_\delta $ with symbol
$\sigma _\delta = D_{\sqrt{1+\delta }} G_\delta $  as a
superposition of phase-space shifts (spreading representation in engineering
language)
$$
T_\delta = \int _{\rdd } \mathcal{U} \widehat{\sigma _\delta }(z) \rho
(z) \, dz =  \int_{\R^{2d}}\mathcal{U} \widehat{G_\delta}(z)
\rho\Big(\sqrt{1+\delta}\,z\Big)\,dz \, .
$$
Although in general $\widehat{G_\delta }$  is a distribution, the
analysis of $T_\delta$ becomes feasible in this representation. A main
tool is the boundedness estimate from  Theorem~\ref{lem:bdd}, which we
will use several times. We will at first assume that $\delta$ is positive and use reflection arguments to cover negative values.
 
\subsection{Truncation error}
Fix a real-valued, even, radial function $\theta\in
C^\infty(\R^{2d})$ such that $\theta(z)=1$, for $|z|\leq 1$,
$\theta(z)=0$, for $|z|\geq 2$,  and $0\leq \theta(z)\leq 1$ else, and
set $\theta_R(z) =\theta(z/R)$, with $R>0$.

We define the truncation of $T_\delta$ by
\begin{align}\label{eq_trunc}
\mathcal{T}_R(T_{\delta})=\int_{\R^{2d}}\mathcal{U}(\theta_R \widehat{G_\delta})(z) \rho\left(\sqrt{1+\delta}\,z\right)\,dz.
\end{align}
Since $\theta $ is real-valued and even, it follows that
$\mathcal{T}_R(T_{\delta})$ is self-adjoint. By Theorem~\ref{lem:bdd}, it
is enough to bound $\big\|G_\delta-\widehat{\theta_R}\ast
G_\delta\big\|_{M^{\infty,1}}$ in order to derive an estimate of the
norm of $T_\delta-\mathcal{T}_R(T_{\delta})$.

\begin{lemma}\label{lem:trunc-error}
Let $0\leq \delta  < \delta_0$ and   $G_\delta\in M^{\infty,1}_{0,2}(\R^{2d})$ be  real-valued. Assume that $\sup_{|t|<\delta_0}\| G_t \|_{M^{\infty,1}_{0,2}}<\infty$, then
$$
\big\|G_\delta-\widehat{\theta_R}\ast G_\delta\big\|_{M^{\infty,1}}\lesssim  R^{-2}\cdot\sup_{|t|<\delta_0}\| G_t \|_{M^{\infty,1}_{ 0,2}}.
$$
In particular, by \eqref{eq_a},
\begin{align}\label{eq_b}
\|T_\delta-\mathcal{T}_R(T_{\delta})\|_{B(L^2(\R^d))}\lesssim   R^{-2}\cdot(1+\delta_0)^d\cdot\sup_{|t|<\delta_0}\| G_t \|_{M^{\infty,1}_{0,2}}.
\end{align}
\end{lemma}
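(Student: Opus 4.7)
The plan is to first establish the symbol estimate
\[ \|G_\delta - \widehat{\theta_R}\ast G_\delta\|_{M^{\infty,1}} \lesssim R^{-2}\cdot \sup_{|t|<\delta_0}\|G_t\|_{M^{\infty,1}_{0,2}}, \]
and then read off the operator bound \eqref{eq_b} as an immediate consequence via \eqref{eq_a}. The guiding principle is that $\widehat{\theta_R}$ is an $R$-dependent kernel on $\R^{2d}$ which is real-valued, even, and normalized by $\int \widehat{\theta_R}(s)\,ds = \theta_R(0) = 1$: its evenness kills the first-order moments, while its second-order moments scale like $R^{-2}$, producing the claimed decay.

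Starting from $G_\delta - \widehat{\theta_R}\ast G_\delta = \int \widehat{\theta_R}(s)\,[G_\delta - T_sG_\delta]\,ds$, I would apply the second-order Taylor expansion
\[ G_\delta - T_s G_\delta \;=\; s\cdot\nabla G_\delta \;-\; \sum_{i,j} s_is_j\int_0^1 (1-r)\, T_{rs}(\partial_i\partial_jG_\delta)\,dr, \]
which holds as an equality of tempered distributions (verified by pairing with Schwartz test functions). Once integrated against $\widehat{\theta_R}$, the linear term vanishes because $\int\widehat{\theta_R}(s)\,s_i\,ds = 0$ by evenness, so
\[ G_\delta - \widehat{\theta_R}\ast G_\delta \;=\; -\sum_{i,j}\int_0^1 (1-r)\int \widehat{\theta_R}(s)\,s_is_j\, T_{rs}(\partial_i\partial_jG_\delta)\,ds\,dr. \]

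Taking $\|\cdot\|_{M^{\infty,1}}$, Minkowski's integral inequality together with the isometric translation invariance of $M^{\infty,1}$ lets me pull the norm inside and replace every $T_{rs}(\partial_i\partial_jG_\delta)$ by $\partial_i\partial_jG_\delta$. The remaining scalar integral is evaluated via the change of variable $u=Rs$ and the scaling $\widehat{\theta_R}(s)=R^{2d}\widehat\theta(Rs)$, giving
\[ \int |\widehat{\theta_R}(s)|\,|s_is_j|\,ds \;=\; R^{-2}\int|\widehat\theta(u)|\,|u_iu_j|\,du, \]
which is finite since $\widehat\theta\in\mathcal{S}(\R^{2d})$. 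Iterating \eqref{eq:X-implies-weight} twice yields $\|\partial_i\partial_j G_\delta\|_{M^{\infty,1}}\lesssim \|G_\delta\|_{M^{\infty,1}_{0,2}}$, completing the symbol estimate after taking the supremum over $|t|<\delta_0$. For the operator bound \eqref{eq_b}, I would identify $T_\delta - \mathcal{T}_R(T_\delta)$ as the Weyl operator with symbol $D_{\sqrt{1+\delta}}(G_\delta - \widehat{\theta_R}\ast G_\delta)$ (this follows from $\mathcal{F}^{-1}(\theta_R\widehat{G_\delta})=\widehat{\theta_R}\ast G_\delta$, using the evenness of $\theta_R$, and the linearity of the spreading representation preceding the lemma) and apply \eqref{eq_a} directly.

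The main delicate point is justifying the distributional Taylor expansion and the subsequent vector-valued integral manipulations in $M^{\infty,1}$: since $G_\delta$ is only a tempered distribution in $M^{\infty,1}_{0,2}$, the derivatives $\partial_i\partial_j G_\delta$ must be interpreted distributionally. They nevertheless belong to $M^{\infty,1}$ by \eqref{eq:X-implies-weight}, which makes the final display a well-defined Bochner integral in $M^{\infty,1}$; the identification with $G_\delta - \widehat{\theta_R}\ast G_\delta$ is then obtained by pairing against Schwartz test functions, reducing the matter to the classical Taylor formula.
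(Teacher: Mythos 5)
Your proof is correct, but it takes a genuinely different route from the paper. The paper never Taylor-expands: it passes to the Fourier side via $\|G_\delta-\widehat{\theta_R}\ast G_\delta\|_{M^{\infty,1}}=\|\widehat{G_\delta}-\theta_R\widehat{G_\delta}\|_{W^{\infty,1}}$, picks a compactly supported window $\Phi$, observes that $V_\Phi\widehat{G_\delta}$ and $V_\Phi(\theta_R\widehat{G_\delta})$ coincide for $|x|\le R-K$ because $\theta_R\equiv 1$ on $B_R(0)$, and then pays the factor $R^{-2}$ by inserting the weight $(1+|x|)^2\gtrsim R^2$ on the complementary region where the integrand survives. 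Your argument instead exploits the moment structure of the approximate identity $\widehat{\theta_R}$: total mass $1$, vanishing first moments by evenness, second moments of size $R^{-2}$, combined with the translation invariance of $M^{\infty,1}$ and the bound $\|\partial_i\partial_j G_\delta\|_{M^{\infty,1}}\lesssim\|G_\delta\|_{M^{\infty,1}_{0,2}}$. Both yield the same rate; the support-localization argument is insensitive to the evenness of $\theta$ and extends verbatim to non-integer weights $s$ (giving $R^{-s}$), whereas your Taylor argument is tied to integer-order cancellation and genuinely needs $\int\widehat{\theta_R}(s)\,s_i\,ds=0$, which does hold here since $\theta$ is assumed even. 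Your handling of the regularity issue is sound: $G_\delta\in M^{\infty,1}_{0,2}$ forces $\partial^\alpha G_\delta\in M^{\infty,1}\hookrightarrow C_b$ for $|\alpha|\le 2$, so the pointwise Taylor formula and the Minkowski/Fubini interchanges are legitimate. One cosmetic correction: the derivative bound you need is not \eqref{eq:X-implies-weight} (which concerns the multiplication operators $X_i$) but the unnumbered estimate $\|\partial_i f\|_{M^{p,q}_{s,t}}\lesssim\|f\|_{M^{p,q}_{s,t+1}}$ immediately following it, or equivalently \eqref{eq:XX-weight} after a Fourier transform; the fact is available in the paper, just under a different label. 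The passage to \eqref{eq_b} via \eqref{eq_a} is exactly as in the paper.
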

\proof Fix a constant $K>0$ and choose $\Phi\in M^1(\R^{2d})$ to be compactly supported
in $B_K(0)$. Let $R \geq 2K$. If $|x|\leq R-K$, then $\mathrm{supp}\, T_x \Phi
\subseteq B_R(0)$ and consequently $V_\Phi
\widehat{G_\delta}(x,\omega)-V_\Phi(\theta_R
\widehat{G_\delta})(x,\omega)=0$ for $|x| \leq R-K$ and all $\omega
\in \R^{2d} $. Therefore,
\begin{align*}
\big\|G_\delta-\widehat{\theta_R}&\ast G_\delta\big\|_{M^{\infty,1}}  =\hspace{-1pt}
\big\|\widehat{G_\delta}-\theta_R  \widehat{G_\delta}\big\|_{W^{\infty,1}} \\
& \asymp  \int_{\R^{2d}}\sup_{\omega\in\R^{2d}} \left|V_\Phi  \widehat{G_\delta}(x,\omega)-V_\Phi(\theta_R  \widehat{ G_\delta})(x,\omega)\right|dx
\\
&=\hspace{-1pt}\int_{\R^{2d}\backslash B_{R-K}(0)}\sup_{\omega\in\R^{2d}} \left|V_\Phi  \widehat{G_\delta}(x,\omega)-V_\Phi (\theta_R  \widehat{ G_\delta})(x,\omega)\right|dx
\\
&\lesssim  R^{-2}  \int_{\R^{2d}} \sup_{\omega\in\R^{2d}}  \Big( |V_\Phi \widehat{G_\delta}(x,\omega)| + |\widehat{\theta_R}\ast_2  V_\Phi  \widehat{ G_\delta} (x,\omega) | \Big)(1 + |x|)^2dx
\\
&\leq  R^{-2}(1+\|\widehat{\theta_R}\|_1) \int_{\R^{2d}}\sup_{\omega\in\R^{2d}}  |V_\Phi \widehat{G_\delta}(x,\omega)|  (1+|x|)^2dx
\\
&\asymp  R^{-2}(1+\|\widehat{\theta_R}\|_1) \|\widehat{G_\delta}\|_{W^{\infty,1}_{0,2}}
\\
&\asymp
     R^{-2}(1+\|\widehat{\theta}\|_1)\|G_\delta\|_{M^{\infty,1}_{0,2}}
     \, .
\end{align*}
We have used that $R \leq 2(R-K)$, \eqref{eq:stft-prod}, and Young's inequality to show that $\sup _\omega
|\widehat{\theta_R}\ast_2 V_\Phi  \widehat{ G_\delta} (x,\omega)| \leq
\|\theta _R\|_1 \sup _\omega | V_\Phi\widehat{ G_\delta} (x,\omega)| $, and
$$
\big\| \widehat{\theta_R}\big\|_1=R^{2d}\int_{\R^{2d}}\big|\widehat{\theta}(R\omega)\big|d\omega=\big\|\widehat{\theta}\big\|_1.
 $$
Finally, for $0 \leq R \leq 2K$,
\begin{align*}
\big\|G_\delta-\widehat{\theta_R}\ast G_\delta\big\|_{M^{\infty,1}} & =\hspace{-1pt}
\big\|\widehat{G_\delta}-\theta_R  \widehat{G_\delta}\big\|_{W^{\infty,1}} \\
& \asymp \hspace{-1pt}\int_{\R^{2d}}\sup_{\omega\in\R^{2d}} \left|V_\Phi  \widehat{G_\delta}(x,\omega)-V_\Phi(\theta_R  \widehat{ G_\delta})(x,\omega)\right|dx
\\
&\lesssim \int_{\R^{2d}}\sup_{\omega\in\R^{2d}} \Big( |V_\Phi \widehat{G_\delta}(x,\omega)|+ |\widehat{\theta_R}\ast_2V_\Phi  \widehat{ G_\delta} (x,\omega) | \Big) dx
\\
&\leq\hspace{-1pt} (1+\|\widehat{\theta_R}\|_1) \int_{\R^{2d}}\sup_{\omega\in\R^{2d}}  |V_\Phi \widehat{G_\delta}(x,\omega)|  dx
\\
&\asymp
(1+\|\widehat{\theta}\|_1)\|G_\delta\|_{M^{\infty,1}}
\\
&\leq \frac{4K^2}{R^2}(1+\|\widehat{\theta}\|_1)\|G_\delta\|_{M^{\infty,1}_{0,2}},
\end{align*}
which concludes the proof after adjusting the implied constants. 
\pbox

The following estimate will be helpful to analyze truncations.
\begin{lemma}\label{lemma_x}
	For $0 < R \leq \delta^{-1/2}$, we have $\big\|e^{\frac{\pi\delta}{2} |\cdot|^2} \theta_R\|_{\mathcal{F}L^1} \lesssim 1$.
\end{lemma}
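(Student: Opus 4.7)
The plan is to reduce the estimate to a statement about a family of smooth compactly supported functions that depends continuously on a bounded parameter, and then to use the dilation invariance of the $\mathcal{F}L^1$ norm to remove the scaling.

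First I would set $s := \delta R^2$, which by assumption satisfies $0 < s \leq 1$, and introduce the rescaled function
\begin{equation*}
k_s(u) := e^{\pi s |u|^2 / 2}\,\theta(u), \qquad u \in \R^{2d}.
\end{equation*}
A direct change of variables shows that $e^{\pi\delta |z|^2/2}\theta_R(z) = k_s(z/R)$, i.e.\ the left-hand side is $D_{1/R} k_s$. Since the $\mathcal{F}L^1$ norm is dilation invariant (Fourier transform turns the dilation $D_{1/R}$ into a Jacobian-compensated dilation that preserves the $L^1$ norm), we get
\begin{equation*}
\big\| e^{\pi\delta |\cdot|^2/2}\theta_R \big\|_{\mathcal{F}L^1} = \|k_s\|_{\mathcal{F}L^1}.
\end{equation*}

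The remaining step is to bound $\|k_s\|_{\mathcal{F}L^1}$ uniformly in $s \in (0,1]$. The function $k_s$ is smooth and supported in $B_2(0) \subset \R^{2d}$, and every partial derivative $\partial^\alpha k_s$ is a finite sum of terms of the form $(\text{polynomial in } u \text{ of degree} \leq |\alpha|) \cdot e^{\pi s |u|^2/2}\cdot \partial^\beta \theta(u)$ with $|\beta|\leq |\alpha|$; on the support of $\theta$ and for $s\leq 1$ the exponential factor is bounded by $e^{2\pi}$, so $\|\partial^\alpha k_s\|_{L^\infty}$ is bounded by a constant $C_\alpha$ independent of $s$. Hence for any integer $N \geq 2d+1$, integration by parts yields the standard pointwise bound $|\widehat{k_s}(\xi)| \leq C_N (1+|\xi|)^{-N}$ with $C_N$ independent of $s$, and integrating in $\xi$ gives $\|k_s\|_{\mathcal{F}L^1} = \|\widehat{k_s}\|_{L^1} \lesssim 1$.

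There is no real obstacle here: the only point that requires care is making sure that the hypothesis $R\leq \delta^{-1/2}$ is used precisely to guarantee $s\leq 1$, which is exactly what keeps the Gaussian factor $e^{\pi s|u|^2/2}$ (and all its derivatives on the compact set $\supp \theta$) bounded by constants independent of $\delta$ and $R$. Everything else is the soft fact that dilation is an isometry on $\mathcal{F}L^1$ combined with the trivial embedding of a uniformly bounded family in $C_c^{2d+1}(\R^{2d})$ into $\mathcal{F}L^1$.
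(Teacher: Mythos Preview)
Your proof is correct and follows essentially the same approach as the paper: both arguments first use the dilation invariance of $\mathcal{F}L^1$ to replace $\theta_R$ by $\theta$ and $\delta$ by $s=\delta R^2\leq 1$, and then bound the resulting compactly supported smooth family uniformly in $s$. The only cosmetic difference is that the paper invokes the Sobolev-type embedding $M^2_{0,d+1}(\R^{2d})\hookrightarrow \mathcal{F}L^1(\R^{2d})$ and estimates $L^2$ norms of derivatives, while you obtain the same conclusion via direct integration by parts and the pointwise decay $|\widehat{k_s}(\xi)|\lesssim(1+|\xi|)^{-(2d+1)}$.
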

\begin{proof}
	We use the dilation invariance of $\mathcal{F}L^1$ and the Sobolev-type embedding
	$M^2_{0,d+1}(\mathbb{R}^{2d})$ $\hookrightarrow \mathcal{F}L^1(\mathbb{R}^{2d})$. Using multi index notation for derivatives, we obtain
	\begin{align*}
		&\big\|e^{\frac{\pi\delta}{2} |\cdot|^2} \theta(\cdot/R)\|_{\mathcal{F}L^1} = 
		\big\|e^{\frac{\pi\delta R^2}{2} |\cdot|^2} \theta\|_{\mathcal{F}L^1}
		\\
		&\qquad\lesssim\sum_{|\alpha|,|\beta| \leq d+1}
		\big\|
		\partial^\alpha \big[ e^{\frac{\pi\delta R^2}{2} |\cdot|^2} \big]
		\cdot
		\partial^\beta \theta \big\|_{L^2}
		\lesssim\sum_{|\alpha| \leq d+1}
		\big\|
		\partial^\alpha \big[ e^{\frac{\pi\delta R^2}{2} |\cdot|^2} \big] \big\|_{L^\infty(B_2(0))} \lesssim 1,
	\end{align*}
	because $\delta R^2 \leq 1$.
\end{proof}

\subsection{Tensorization}

Let $\delta>0$ and define $T_{  \delta}^\otimes:L^2(\R^{2d})\rightarrow L^2(\R^{2d})$
\begin{align*}
T_{  \delta}^\otimes&=\int_{\R^{2d}} \mathcal{U}\widehat{G_\delta} (z)\rho( z)\otimes \rho(\sqrt{\delta}z)dz
\\
&=\int_{\R^{2d}} \mathcal{U}\widehat{G_\delta}(x,\omega)\rho( x,\sqrt{\delta}x,\omega,\sqrt{\delta}\omega)dxd\omega.
\end{align*}
Note that if $G_\delta$ is real-valued, then both $T_{ \delta}$ and $T_{ \delta}^\otimes$ are
self-adjoint. We emphasize that the tensorized operator $T_\delta
^\otimes$ acts on $L^2(\rdd )$, whereas the original operator
$T_\delta $ acts on $\lrd $. We similarly define a tensorized operator associated with the truncation \eqref{eq_trunc}:
\begin{align*}
\mathcal{T}_R(T_{\delta})^\otimes=\int_{\R^{2d}}\mathcal{U}(\theta_R\widehat{G_\delta})(z)
\rho (z) \otimes \rho(\sqrt{\delta}z)dz.
\end{align*}

Let $ \text{Sp}(d)$ denote the \emph{symplectic group} of all matrices $\mathcal{R}\in \text{GL}(2d,\R)$ that satisfy $\mathcal{R}^\ast J\mathcal{R}=J$, with $J=\left(\begin{smallmatrix}0 & I_d\\ -I_d &0\end{smallmatrix}\right)$. For each symplectic matrix, there exists a unitary operator $\mu(\mathcal{R})$, called \emph{metaplectic operator}, such that 
\begin{equation}\label{eq:meta-symp}
\rho(\mathcal{R}(x,\omega))= \mu(\mathcal{R})\rho(x,\omega)\mu(\mathcal{R})^{-1},
\end{equation}
see for example \cite[Lemma~9.4.3]{groe1}.
We are now ready to prove that $T_{\delta}$ and $T_{\delta}^\otimes$ are isospectral.

\begin{theorem}\label{thm:tensor}
Let $0\leq \delta <\delta_0$, and  $G_\delta \in  M^{\infty,1}(\R^{2d})$ be real-valued. Then 
$\sigma(T_{  \delta})=\sigma(T_\delta^\otimes)$, and
$\sigma(\mathcal{T}_R(T_{\delta}))=\sigma(\mathcal{T}_R(T_{\delta})^\otimes)$, for all $R>0$.
\end{theorem}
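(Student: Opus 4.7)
The plan is to exhibit $T_\delta^\otimes$ as unitarily equivalent to $T_\delta \otimes I_{L^2(\R^d)}$ by means of an explicit metaplectic conjugation. Since $\sigma(A \otimes I) = \sigma(A)$ for every bounded operator $A$, this immediately yields $\sigma(T_\delta^\otimes) = \sigma(T_\delta)$; the same argument with $\theta_R \widehat{G_\delta}$ in place of $\widehat{G_\delta}$ handles the truncated statement.

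First I would identify $L^2(\R^{2d}) \cong L^2(\R^d) \otimes L^2(\R^d)$ and observe that for $z_i = (x_i, \omega_i) \in \R^{2d}$, the tensor of time-frequency shifts is itself a full time-frequency shift on $\R^{4d}$:
$$
\rho(z_1) \otimes \rho(z_2) = \rho_{4d}(x_1, x_2, \omega_1, \omega_2).
$$
Introduce the linear embedding $\phi: \R^{2d} \to \R^{4d}$, $\phi(x,\omega) = (x, \sqrt{\delta}x, \omega, \sqrt{\delta}\omega)$, so that $\rho(z) \otimes \rho(\sqrt{\delta}z) = \rho_{4d}(\phi(z))$.

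Second, I would exhibit a symplectic matrix $\mathcal{R} \in \mathrm{Sp}(2d)$ (that is, of $\R^{4d}$) sending $\phi(z)$ to $(\sqrt{1+\delta}\,x, 0, \sqrt{1+\delta}\,\omega, 0)$. Setting $c = 1/\sqrt{1+\delta}$ and $s = \sqrt{\delta}/\sqrt{1+\delta}$, so $c^2 + s^2 = 1$, take $\mathcal{R}$ to act by the orthogonal $2d \times 2d$ block $A = \bigl(\begin{smallmatrix} cI_d & sI_d \\ -sI_d & cI_d \end{smallmatrix}\bigr)$ simultaneously on the position block $(x_1,x_2)$ and the frequency block $(\omega_1,\omega_2)$. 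Since $A$ is orthogonal, $\mathcal{R} = \mathrm{diag}(A,A)$ is symplectic. The identities $c + s\sqrt{\delta} = \sqrt{1+\delta}$ and $-s + c\sqrt{\delta} = 0$ then give the desired $\mathcal{R}\phi(z) = (\sqrt{1+\delta}\,x, 0, \sqrt{1+\delta}\,\omega, 0)$.

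Third, by the metaplectic-symplectic intertwining \eqref{eq:meta-symp} applied on $\R^{4d}$, there is a unitary $\mu(\mathcal{R})$ on $L^2(\R^{2d})$ with
$$
\mu(\mathcal{R})\,\bigl(\rho(z) \otimes \rho(\sqrt{\delta}z)\bigr)\,\mu(\mathcal{R})^{-1} = \rho_{4d}(\mathcal{R}\phi(z)) = \rho(\sqrt{1+\delta}\,z) \otimes I_{L^2(\R^d)}.
$$
Integrating this identity against $\mathcal{U}\widehat{G_\delta}(z)\,dz$ — a weak integral that converges by Theorem~\ref{lem:bdd}(ii) since $G_\delta \in M^{\infty,1}$ implies $\mathcal{U}\widehat{G_\delta} \in W^{\infty,1}$, and with which unitary conjugation commutes — yields $\mu(\mathcal{R})\,T_\delta^\otimes\,\mu(\mathcal{R})^{-1} = T_\delta \otimes I$. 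Hence $\sigma(T_\delta^\otimes) = \sigma(T_\delta \otimes I) = \sigma(T_\delta)$. Replacing $\widehat{G_\delta}$ by $\theta_R \widehat{G_\delta}$ throughout (and noting that $\theta_R \widehat{G_\delta}$ has the same Wiener-amalgam integrability properties as $\widehat{G_\delta}$) proves the corresponding statement for $\mathcal{T}_R(T_\delta)$.

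The main subtlety is justifying that the metaplectic conjugation passes inside the weak integral — this is a routine consequence of the operator-norm bound of Theorem~\ref{lem:bdd}(ii) together with the unitarity of $\mu(\mathcal{R})$ — together with confirming that the explicit $\mathcal{R}$ constructed above lives in $\mathrm{Sp}(2d)$ in the dimension where the metaplectic representation is to be applied. The key conceptual point, replacing Bellissard's $C^\ast$-algebraic tensorization for non-commutative tori, is that the simultaneous rotation of position- and frequency-planes is precisely the symplectic transformation that converts the twisted diagonal $\phi(z)$ into a shift purely in the first coordinate.
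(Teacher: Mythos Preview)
Your proof is correct and follows essentially the same approach as the paper: both construct a block-diagonal symplectic matrix $\mathcal{R}=\mathrm{diag}(A,A)$ with $A\in\mathcal{O}(2d)$, invoke the metaplectic intertwining relation to conjugate $T_\delta^\otimes$ into $T_\delta\otimes I$, and conclude via $\sigma(T_\delta\otimes I)=\sigma(T_\delta)$. Your version is slightly more explicit---you write down the rotation block $A$ concretely, whereas the paper merely asserts the existence of an orthogonal $R_\delta$ with the required mapping property---and you justify the passage of the unitary conjugation through the weak integral, but these are elaborations of the same argument rather than a different route.
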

\proof
Let ${R}_\delta\in \mathcal{O}(2d)$ be an  orthogonal transformation
that satisfies 
$$
R_\delta(
x, \sqrt{\delta}x
) = (
\sqrt{1+\delta}x,0
),\quad \text{for every }x\in \R^{d}, 
$$ 
and set $\mathcal{R}_\delta=\left(\begin{smallmatrix}
R_\delta & 0\\ 0 & R_\delta
\end{smallmatrix}\right)\in\R^{4d\times 4d}.
$
Then $\mathcal{R}_\delta\in\text{Sp}(2d)$ and
\begin{align*}
 \mu(\mathcal{R}_\delta)
 \rho(x,\sqrt{\delta}x,\omega,\sqrt{\delta}\omega)\mu(\mathcal{R}_\delta)^{-1}
 &= \rho \big(R_\delta (x,\sqrt{\delta}x), R_\delta
 (\omega,\sqrt{\delta}\omega )\big) \\ &  =
 \rho(\sqrt{1+\delta}x,0,\sqrt{1+\delta}\omega,0) \, .    
\end{align*}
 Since $\rho(\sqrt{1+\delta}x,0,\sqrt{1+\delta}\omega,0)=\rho(\sqrt{1+\delta}x,\sqrt{1+\delta}\omega)\otimes I$,  it follows that 
\begin{align*}
T_\delta\otimes I& =\int_{\R^{2d}}\mathcal{U}\widehat{G_\delta}(z)\rho(\sqrt{1+\delta}x,0,\sqrt{1+\delta}\omega,0)dz
\\
&= \int_{\R^{2d}}\mathcal{U}\widehat{G_\delta}(z)   \mu(\mathcal{R}_\delta) \rho(x,\sqrt{\delta}x,\omega,\sqrt{\delta}\omega)\mu(\mathcal{R}_\delta)^{-1}dz
\\
&=\mu(\mathcal{R}_\delta) T_\delta^\otimes  \mu(\mathcal{R}_\delta)^{-1}.
\end{align*}
Hence  $T_\delta\otimes I$ and $T_\delta^\otimes$ are unitarily equivalent and therefore $\sigma(T_\delta)=\sigma(T_\delta\otimes I)=\sigma(T_\delta^\otimes)$. The same argument applies to $\mathcal{T}_R(T_{\delta})$.
\pbox

Bellissard \cite{bel94} proved a special case of Theorem~\ref{thm:tensor} for periodic symbols with $C^*$-algebra arguments. Our
main insight is that the metaplectic representation allows one to
treat also non-periodic symbols (and perhaps provides a more direct
argument even for periodic ones). 

We now extend the resolution of the identity \eqref{eq:c4} to obtain
the following expansion of phase-space shifts. Recall that  $\varphi$
is always  the normalized Gaussian
\begin{align}\label{eq_gauss}
\varphi(t)=2^{d/4}e^{-\pi |t|^2}, \quad t \in \mathbb{R}^d.
\end{align}
The following proposition, which can be found in \cite[Proposition~2~(vi)]{bel94}, is the core of the argument leading to Theorem \ref{thm:main-general}. We provide a short proof for the reader's convenience.

\begin{lemma}\label{lemma_q}
	Let $\varphi$ be the normalized Gaussian \eqref{eq_gauss}, $[\cdot,\cdot]$ the symplectic form
	\eqref{eq_symplectic}, and $q$ the rank-one projection \eqref{eq_q} associated with $\varphi$. Then
	\begin{align}\label{eq:tf-multiplier}
	\rho(z) =e^{\frac{\pi}{2} |z|^2} \int_{\R^{2d}} e^{2\pi i[z,z']} q(z') dz', \qquad z \in \mathbb{R}^{2d},
	\end{align}
where integral converges in the weak sense.
\end{lemma}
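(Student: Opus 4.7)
The plan is to verify the identity in its weak sense by testing both sides against arbitrary $f,g\in L^2(\R^d)$. Expanding $q(z')f$ via \eqref{eq_q} and using $V_\varphi f(z')=e^{-i\pi x'\omega'}\langle f,\rho(z')\varphi\rangle$, the two symmetric phases in $\langle f,\rho(z')\varphi\rangle\overline{\langle g,\rho(z')\varphi\rangle}$ cancel, so the claim reduces to showing
\[
\langle \rho(z)f,g\rangle = e^{\pi|z|^2/2}\int_{\R^{2d}} e^{2\pi i[z,z']}\, V_\varphi f(z')\,\overline{V_\varphi g(z')}\, dz'.
\]

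Next, I recognize the integral on the right as a symplectic Fourier transform. Writing $[z,z']=(Jz)\cdot z'$ with $J=\bigl(\begin{smallmatrix}0 & I_d\\ -I_d & 0\end{smallmatrix}\bigr)$, the integral equals $\mathcal{F}\bigl(V_\varphi f\cdot \overline{V_\varphi g}\bigr)(-Jz)$. Applying the identity \eqref{eq:fourier-prod-stft} converts this to $\mathcal{U}^{-1}\bigl(V_g f\cdot \overline{V_\varphi\varphi}\bigr)(-Jz)$. Since $\mathcal{U}^{-1}F(x,\omega)=F(-\omega,x)$ and $-Jz=(-\omega,x)$, this further simplifies to the pointwise product $V_g f(-z)\,\overline{V_\varphi\varphi}(-z)$.

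The final step is to check that this product equals $e^{-\pi|z|^2/2}\langle\rho(z)f,g\rangle$, which will cancel the exponential prefactor and complete the proof. I will use two ingredients: the explicit Gaussian computation $V_\varphi\varphi(x,\omega)=e^{-i\pi x\omega}e^{-\pi|z|^2/2}$, and the covariance relation $V_g f(-z)=e^{-i\pi x\omega}\langle\rho(z)f,g\rangle$, which follows from the symmetric phase convention together with $\rho(-z)=\rho(z)^*$. The phases $e^{\pm i\pi x\omega}$ then cancel, the Gaussian factor matches the prefactor, and the identity drops out.

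The structural content of the lemma is carried entirely by \eqref{eq:fourier-prod-stft}; the main obstacle is simply the careful bookkeeping of several phase factors and of the orientation conventions for the symplectic matrix $J$ and the flip $\mathcal{U}$, so that the cancellations in the last step go through cleanly.
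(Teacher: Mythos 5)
Your proof is correct and follows essentially the same route as the paper's: test against $f,g\in L^2(\R^d)$, recognize the integral as a (symplectic) Fourier transform of $V_\varphi f\cdot\overline{V_\varphi g}$, apply \eqref{eq:fourier-prod-stft}, and conclude via $V_gf(-z)\,\overline{V_\varphi\varphi(-z)}=\langle\rho(z)f,g\rangle e^{-\pi|z|^2/2}$. All phase and orientation conventions check out, so there is nothing to add.
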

\proof
Let $f,g\in L^2(\R^d)$.  Using \eqref{eq:fourier-prod-stft}, we find that
\begin{align*}
  \int_{\R^{2d}} e^{2\pi i[z,z']} \langle q(z')f,g\rangle  dz' &=    \mathcal{F}  
 \Big( V_\varphi f \, \overline{V_\varphi g} \Big) (-\omega ,x) \\  
&=  \mathcal{U}   \mathcal{F}   \Big( V_\varphi f \, \overline{V_\varphi g} \Big) (-x,-\omega) \\  
&= V_gf(-z)\overline{V_\varphi \varphi (-z)} = \langle \rho (z) f, g \rangle \, e^{-\pi  |z|^2/2}  \, .
\end{align*}
Since $f$ and $g$ were arbitrary, this implies \eqref{eq:tf-multiplier}. 
\pbox

Next, we apply Lemma \ref{lemma_q} to inspect the tensorized operator $T_{\delta}^\otimes$.

\begin{lemma}\label{lemma_new}
Fix $G_\delta \in  M^{\infty,1}(\R^{2d})$ for  $\delta> 0$ and 
 set
\begin{equation}
\label{eq:c3}
Q_R( T_{ \delta})(z'):=\int_{\R^{2d}}e^{\frac{\pi\delta}{2}
	|z|^2}e^{2\pi i[z,z']} \mathcal{U}(\theta_R\widehat{G_\delta})(z)
\rho(z) dz.
\end{equation}
Then
\begin{align}\label{eq:tensor-T-Q}
\mathcal{T}_R(T_{\delta})^\otimes=\frac{1}{\delta^d}\int_{\R^{2d}}{Q}_R (T_{ \delta})(z')\otimes q(
z'/\sqrt{\delta})dz'.
\end{align}
\end{lemma}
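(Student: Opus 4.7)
The identity follows directly from Lemma \ref{lemma_q} combined with a change of variables and Fubini's theorem; the technical subtlety lies in justifying the interchange of integrals, which I will interpret in the weak sense (against test vectors of the form $f_1\otimes f_2$).

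First I would apply the expansion \eqref{eq:tf-multiplier} to the second tensor factor. Replacing $z$ by $\sqrt{\delta}\,z$ gives
\begin{equation*}
\rho(\sqrt{\delta}\,z)=e^{\frac{\pi\delta}{2}|z|^2}\int_{\R^{2d}} e^{2\pi i[\sqrt{\delta}\,z,w]} q(w)\,dw.
\end{equation*}
Then I would substitute $w=z'/\sqrt{\delta}$. Since the symplectic form is bilinear, $[\sqrt{\delta}\,z,z'/\sqrt{\delta}]=[z,z']$, and the Jacobian gives $dw=\delta^{-d}\,dz'$. Hence
\begin{equation*}
\rho(\sqrt{\delta}\,z)=\frac{1}{\delta^{d}}e^{\frac{\pi\delta}{2}|z|^2}\int_{\R^{2d}} e^{2\pi i[z,z']} q(z'/\sqrt{\delta})\,dz'.
\end{equation*}

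Next I would plug this into the definition of $\mathcal{T}_R(T_\delta)^\otimes$, which produces
\begin{equation*}
\mathcal{T}_R(T_\delta)^\otimes=\frac{1}{\delta^d}\int_{\R^{2d}}\!\int_{\R^{2d}} e^{\frac{\pi\delta}{2}|z|^2} e^{2\pi i[z,z']}\,\mathcal{U}(\theta_R\widehat{G_\delta})(z)\,\rho(z)\otimes q(z'/\sqrt{\delta})\,dz\,dz',
\end{equation*}
and then swap the order of integration, recognising the inner integral in $z$ as $Q_R(T_\delta)(z')$ from \eqref{eq:c3}. Matching with the right-hand side of \eqref{eq:tensor-T-Q} completes the identity.

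The only nontrivial point is the interchange of integrations, which is where the hypothesis $G_\delta\in M^{\infty,1}$ and the truncation $\theta_R$ are essential: I would test against $f_1\otimes f_2, g_1\otimes g_2\in L^2(\R^d)$, so that the formal tensor operator identity becomes the scalar identity
\begin{equation*}
\int \mathcal{U}(\theta_R\widehat{G_\delta})(z)\,\langle\rho(z)f_1,g_1\rangle\,\langle\rho(\sqrt\delta z)f_2,g_2\rangle\,dz
\end{equation*}
on the left and its rearranged counterpart on the right. Since $\theta_R\widehat{G_\delta}$ is a compactly supported distribution paired against the Schwartz-type object $z\mapsto e^{\frac{\pi\delta}{2}|z|^2}\langle\rho(z)f_1,g_1\rangle$ (which is, modulo the Gaussian bump of bounded size on $\mathrm{supp}\,\theta_R$, essentially a short-time Fourier transform) the outer integration in $z$ is well-defined; Lemma \ref{lemma_x} guarantees that the Gaussian factor does not disrupt the needed bounds on compact sets of size $R$. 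The inner $z'$-integral converges weakly by Lemma \ref{lemma_q} for each fixed $z$. A standard density/approximation argument — e.g.\ regularising $\theta_R\widehat{G_\delta}$ by convolution and passing to the limit using the continuity of $\sigma\mapsto\sigma^w$ on $M^{\infty,1}$ via Theorem \ref{lem:bdd}(i)–(ii) — justifies the Fubini step and concludes the proof.
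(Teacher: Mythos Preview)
Your formal derivation is correct and mirrors the paper's: apply Lemma~\ref{lemma_q} to the second tensor factor, rescale, and identify the inner $z$-integral as $Q_R(T_\delta)(z')$. The paper proceeds exactly this way at the heuristic level.

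The difference lies in the justification of the interchange. You propose Fubini after regularising $\theta_R\widehat{G_\delta}$, then passing to the limit via the continuity of the Weyl calculus on $M^{\infty,1}$. This is plausible but, as written, incomplete: you would need to check that the regularised identity actually passes to the limit on \emph{both} sides simultaneously, and in particular that the right-hand side $\frac{1}{\delta^d}\int Q_R(T_\delta)(z')\otimes q(z'/\sqrt{\delta})\,dz'$ is stable under the approximation (this is not covered by Theorem~\ref{lem:bdd} alone, since the $z'$-integral is not a single Weyl quantisation). The paper avoids this altogether by a Parseval manoeuvre: after testing against $f_1\otimes f_2,\,g_1\otimes g_2$, it recognises $\langle Q_R(T_\delta)(z')f_1,g_1\rangle$ as a Fourier transform in $z'$ (via \eqref{eq:F-Q}) and the $z'$-integral of $\langle q(z'/\sqrt\delta)f_2,g_2\rangle$ as an inverse Fourier transform. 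The identity then becomes an $L^\infty$--$L^1$ duality pairing, and the paper closes by showing explicitly that $\langle Q_R(T_\delta)(\cdot)f_1,g_1\rangle\in L^\infty$ (using \eqref{eq:c8} and Lemma~\ref{lemma_x}) and $\langle q(\cdot/\sqrt\delta)f_2,g_2\rangle\in L^1$. This sidesteps both the distributional nature of $\widehat{G_\delta}$ and the merely weak convergence of the $z'$-integral, and gives a self-contained justification without any approximation step.
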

\begin{proof}
	Let us first observe that if $h(z)=z\cdot z'$, then  $\mathcal{U}h(z)=[z,z']$ which implies that for $f,g\in L^2(\R^d)$
\begin{equation}\label{eq:F-Q}
\langle Q_R(T_\delta)(z')f,g\rangle =\mathcal{U}\mathcal{F}\big(e^{\frac{\pi\delta}{2}
	|\cdot|^2} \mathcal{U}(\theta_R\widehat{G_\delta})\langle \rho(\cdot) f,g\rangle\big)(z'),
\end{equation}
as well as
$$
\int_{\R^{2d}}e^{2\pi i \sqrt{\delta}[z,z']}\langle q(z')f,g\rangle dz'=D_{\sqrt{\delta}}\mathcal{U}\mathcal{F}^{-1}(\langle q(\cdot)f,g\rangle)(z).
$$
Using \eqref{eq:tf-multiplier} for the second factor in $\rho
(z) \otimes \rho (\sqrt{\delta}z)$,  we may formally write $\mathcal{T}_R(T_{ \delta})^\otimes$ as
\begin{align*}
\mathcal{T}_R(T_{
  \delta})^\otimes&=\int_{\R^{2d}}\mathcal{U}(\theta_R\widehat{G_\delta})(z)
                    \rho (z) \otimes \rho(\sqrt{\delta}z)dz \notag \\
 &=\int_{\R^{2d}}\mathcal{U}(\theta_R\widehat{G_\delta})(z) \rho (z)
   \otimes \left(e^{\frac{\pi\delta}{2} |z|^2} \int_{\R^{2d}} e^{2\pi
   i \sqrt{\delta}  [z,z']} q(z') dz'\right)dz \notag 
\\
 &=\int_{\R^{2d}}e^{\frac{\pi\delta}{2} |z|^2} \mathcal{U}(\theta_R\widehat{G_\delta})(z) \rho (z)
   \otimes  \int_{\R^{2d}} e^{2\pi
   i \sqrt{\delta}  [z,z']} q(z') dz' dz. \notag 
\end{align*}
For $f_1,f_2,g_1,g_2\in L^2(\R^d)$, we therefore get
\begin{align*}
\big\langle \mathcal{T}_R(T_{ \delta})^\otimes& (f_1\otimes f_2), (g_1\otimes g_2)\big\rangle 
\\
&=\Big\langle \mathcal{U}(e^{\frac{\pi\delta}{2} |\cdot|^2}\theta_R\widehat{G_\delta})\langle \rho(\cdot)f_1,g_1\rangle, \overline{D_{\sqrt{\delta}}\mathcal{U}\mathcal{F}^{-1}\big(\langle q(\cdot)f_2,g_2\rangle \big)
}\Big\rangle
\\
&=\frac{1}{\delta^d}\Big\langle  \mathcal{F}^{-1} \mathcal{U}^\ast\big( \mathcal{U}(e^{\frac{\pi\delta}{2} |\cdot|^2}\theta_R\widehat{G_\delta})\langle \rho(\cdot)f_1,g_1\rangle\big), \overline{D_{1/\sqrt{\delta}}\big(\langle q(\cdot)f_2,g_2\rangle} \big)
\Big\rangle
\\
&=\frac{1}{\delta^d}\Big\langle \mathcal{U} \mathcal{F} \big( \mathcal{U}(e^{\frac{\pi\delta}{2} |\cdot|^2}\theta_R\widehat{G_\delta})\langle \rho(\cdot)f_1,g_1\rangle\big), \overline{D_{1/\sqrt{\delta}}\big(\langle q(\cdot)f_2,g_2\rangle} \big)
\Big\rangle
\\
&=\frac{1}{\delta^d}\Big\langle \langle Q_R( T_{ \delta})(\cdot )  f_1,g_1\rangle, \overline{\langle q( \cdot /\sqrt{\delta})f_2,g_2\rangle}\Big\rangle
\\
&=\big\langle\frac{1}{\delta^d}\int_{\R^{2d}}{Q}_R (T_{ \delta})(z')\otimes q(
z'/\sqrt{\delta})dz'(f_1\otimes f_2), (g_1\otimes g_2)\big\rangle.
\end{align*}
To justify these calculations, we show that $\langle Q_R( T_{ \delta})(\cdot )  f_1,g_1\rangle\in L^\infty(\R^{2d})$ and that $\langle q( \cdot /\sqrt{\delta})f_2,g_2\rangle \in L^1(\R^{2d})$ (with norms that may depend on $\delta$ and $R$). 
To this end note first
\begin{align*}
\|\langle q(\cdot) f,g\rangle\|_{L^1} =  \big\|V_\varphi f\cdot\overline{V_\varphi g}\big\|_{L^1} 
\leq \big\|V_\varphi f\big\|_{L^2} \big\| {V_\varphi g}\big\|_{L^2} 
= \|f\|_2 \|g\|_2 \, .
\end{align*}
Second, letting for simplicity $\|f\|_2=\|g\|_2=1$, 
\begin{align*}
|\langle Q_R( T_{ \delta})(z )  f,g\rangle|&\leq \| Q_R( T_{ \delta})(z )\|_{B(L^2(\R^{d}))} 
\lesssim \big\|\mathcal{U}\big( M_{-z}e^{\frac{\pi\delta}{2} |\cdot|^2} \theta_R\widehat{G_\delta}\big)\big\|_{W^{\infty,1}} 
\\
& \lesssim  \big\|e^{\frac{\pi\delta}{2} |\cdot|^2} \theta_R\widehat{G_\delta}\big\|_{W^{\infty,1}} \lesssim    \big\|e^{\frac{\pi\delta}{2} |\cdot|^2} \theta_R\|_{\mathcal{F}L^1}\|\widehat{G_\delta}\big\|_{W^{\infty,1}}  
<\infty,
\end{align*}
where we  used \eqref{eq:c8} and $e^{\frac{\pi\delta}{2} |\cdot|^2} \theta_R \in \mathcal{S}(\mathbb{R}^{2d})$. Taking the supremum over $z\in\R^{2d}$ shows that $\langle Q_R( T_{ \delta})(\cdot )  f,g\rangle\in L^\infty(\R^{2d})$.
\end{proof}

We need the following estimate on the reversal of the heat-flow, which follows from a Taylor expansion; see, e.g., \cite[Lemma 6]{bel94}.

\begin{lemma}\label{lem:conv-aux}
Let $F\in C_b^2(\R^{2d})$ and   $\Phi_\delta(z)=\frac{1}{\delta^d}e^{-{\pi}|z|^2/\delta}$ with $z \in \mathbb{R}^{2d}$ and $\delta>0$. Then
\begin{equation}
\|F-\Phi_\delta\ast F\|_\infty\lesssim \delta\|\partial^2F\|_\infty:=\delta\sum_{|\alpha|=2}\|\partial^\alpha F\|_\infty.
\end{equation}
\end{lemma}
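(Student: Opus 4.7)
The plan is to proceed by Taylor expansion, exploiting the symmetry and scaling of the Gaussian kernel $\Phi_\delta$.

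First I would verify that $\Phi_\delta$ is a probability density on $\mathbb{R}^{2d}$, i.e.\ $\int_{\R^{2d}} \Phi_\delta(y)\,dy = 1$, which follows from the standard Gaussian identity $\int_{\R^{2d}} e^{-\pi|y|^2/\delta}\,dy = \delta^d$. Moreover, since $\Phi_\delta$ is even in each coordinate, all of its first moments vanish: $\int_{\R^{2d}} y_i\, \Phi_\delta(y)\,dy = 0$ for $i=1,\dots,2d$. Finally, the second moment scales linearly in $\delta$: the substitution $w = y/\sqrt{\delta}$ gives $\int_{\R^{2d}} |y|^2 \Phi_\delta(y)\,dy = \delta \int_{\R^{2d}} |w|^2 e^{-\pi|w|^2}\,dw = C_d \, \delta$.

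Next, using $\int \Phi_\delta = 1$, write
\begin{equation*}
F(z) - (\Phi_\delta \ast F)(z) = \int_{\R^{2d}} \Phi_\delta(y)\bigl[F(z) - F(z-y)\bigr]\,dy.
\end{equation*}
Since $F \in C_b^2(\R^{2d})$, I would apply Taylor's theorem with integral remainder:
\begin{equation*}
F(z-y) = F(z) - \nabla F(z) \cdot y + \sum_{|\alpha|=2} \frac{2}{\alpha!}\, y^\alpha \int_0^1 (1-s)\, \partial^\alpha F(z - s y)\,ds.
\end{equation*}
Inserting this into the previous display, the constant term cancels and the linear term $\int \Phi_\delta(y)\, y\,dy$ vanishes by the symmetry noted above. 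Only the quadratic remainder survives.

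Bounding the remainder by $\|\partial^\alpha F\|_\infty$ and applying Fubini, I obtain
\begin{equation*}
\bigl| F(z) - (\Phi_\delta \ast F)(z) \bigr| \lesssim \|\partial^2 F\|_\infty \int_{\R^{2d}} |y|^2\, \Phi_\delta(y)\,dy \lesssim \delta\, \|\partial^2 F\|_\infty,
\end{equation*}
uniformly in $z$, which is the claim. The argument is routine and I do not expect any real obstacles; the only care needed is tracking the dimension-dependent constant in the second moment and verifying the normalization of $\Phi_\delta$ on $\R^{2d}$ (rather than $\R^d$), which is already built into the stated form of the kernel.
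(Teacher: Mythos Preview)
Your argument is correct and matches the approach the paper indicates: the paper does not spell out a proof but simply states that the estimate ``follows from a Taylor expansion'' and refers to \cite[Lemma~6]{bel94}, which is exactly what you carry out. The normalization, vanishing first moments, and second-moment scaling of $\Phi_\delta$ on $\R^{2d}$ are all verified correctly, and the integral-remainder form of Taylor's theorem is applied properly.
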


Finally, we compare the spectral extreme values of the tensorization $\mathcal{T}_R(T_\delta)^\otimes$ and $Q_R(T_{ \delta})(0)$.

\begin{lemma}\label{lem:Sd-Td_times}
Let $0< \delta   < \delta_0$
and  $G_\delta \in  M^{\infty,1}_{0,2}(\R^{2d})$ be real-valued.  Assume that $\sup_{|t|<\delta_0}\|G_t\|_{M^{\infty,1}_{0,2}}< \infty $ and $0<R \leq \delta^{-1/2}$, then
$$
\big|\sigma_{\pm}\big(Q_R(T_{ \delta})(0)\big)-\sigma_\pm\big(\mathcal{T}_R(T_{\delta})^\otimes\big)\big|\lesssim  \delta  \cdot\sup_{|t|<\delta_0}\left\|   G_t  \right\|_{M^{\infty,1}_{0,2}}.
$$
\end{lemma}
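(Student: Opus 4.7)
The plan is to reduce the spectral comparison to an operator-norm estimate and then carry out an (operator-valued) reverse heat-flow argument. Since $\sigma(A\otimes I)=\sigma(A)$ for any self-adjoint $A$, we have $\sigma_\pm(Q_R(T_\delta)(0))=\sigma_\pm(Q_R(T_\delta)(0)\otimes I)$, and Lemma~\ref{lem:norm-bd-edges}\,\eqref{eq:diff-of-norms0} reduces the claim to the operator estimate
\[
\bigl\|\mathcal{T}_R(T_\delta)^\otimes-Q_R(T_\delta)(0)\otimes I\bigr\|_{B(L^2(\R^{2d}))}\lesssim \delta\cdot\sup_{|t|<\delta_0}\|G_t\|_{M^{\infty,1}_{0,2}}.
\]
Combining \eqref{eq:tensor-T-Q} with the scaled resolution of identity $\frac{1}{\delta^d}\int q(z'/\sqrt{\delta})\,dz'=I$ (a rescaling of \eqref{eq:c4}) gives
\[
\mathcal{T}_R(T_\delta)^\otimes-Q_R(T_\delta)(0)\otimes I=\tfrac{1}{\delta^d}\int\bigl[Q_R(T_\delta)(z')-Q_R(T_\delta)(0)\bigr]\otimes q(z'/\sqrt{\delta})\,dz'.
\]

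The next step is to exploit the anti-Wick (Berezin-Toeplitz) structure of the integral in the second factor. Performing the substitution $z'=\sqrt{\delta}w$ and using the scalar identity $\int a(w)q(w)\,dw=(a*\Phi)^w$ with $\Phi=\mathcal{W}(\varphi)$ (a Gaussian), the above becomes an operator-valued Weyl operator in the second variable whose symbol is the Gaussian smoothing of $w\mapsto Q_R(T_\delta)(\sqrt{\delta}w)-Q_R(T_\delta)(0)$. The reverse heat-flow Lemma~\ref{lem:conv-aux} then delivers the crucial $\delta$-gain, provided we can bound the second derivative of $z'\mapsto Q_R(T_\delta)(z')$ in operator norm by $\|G_\delta\|_{M^{\infty,1}_{0,2}}$. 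Differentiating the integral \eqref{eq:c3} twice in $z'$ brings down a quadratic factor $\bigl(2\pi i[z,z']\bigr)^2$; by Theorem~\ref{lem:bdd}(ii) the operator norm of the resulting superposition of phase-space shifts is controlled by $\|e^{\pi\delta|\cdot|^2/2}\theta_R\widehat{G_\delta}\cdot[z,\cdot]^2\|_{W^{\infty,1}}$. Using Lemma~\ref{lemma_x} (to keep $e^{\pi\delta|\cdot|^2/2}\theta_R$ in $\mathcal{F}L^1$ uniformly for $R\le\delta^{-1/2}$) together with the product inequality \eqref{eq:c8} and two applications of the weight-promotion estimate \eqref{eq:X-implies-weight}, this is bounded by $\|\widehat{G_\delta}\|_{W^{\infty,1}_{0,2}}\asymp \|G_\delta\|_{M^{\infty,1}_{0,2}}$, as needed.

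The main obstacle is to upgrade the above scalar/Taylor-type estimate into a genuine operator-norm bound on the tensor-product space $L^2(\R^{2d})$: a naive pairing against simple tensors $f_1\otimes f_2$ only controls the numerical radius restricted to product states, which is strictly weaker than the full operator norm. This is handled via the unitary conjugation identity $Q_R(T_\delta)(z')=\rho(z')^*Q_R(T_\delta)(0)\rho(z')$ (from \eqref{eq_x2}), which reveals that the operator-valued symbol $w\mapsto Q_R(T_\delta)(\sqrt{\delta}w)$ is a translation conjugate of $Q_R(T_\delta)(0)$, and by framing the convolution-with-$\Phi$ step within the operator-valued anti-Wick calculus; here the evenness of the Gaussian $\Phi$ annihilates all first-order-in-$\sqrt{\delta}$ contributions, leaving only the genuinely $O(\delta)$ second-order remainder and securing the claimed operator-norm bound.
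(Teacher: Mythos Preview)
Your reduction to the operator-norm bound $\bigl\|\mathcal{T}_R(T_\delta)^\otimes - Q_R(T_\delta)(0)\otimes I\bigr\|_{B(L^2(\R^{2d}))}\lesssim\delta$ is strictly stronger than the lemma, and the sketch you give for it does not go through. The trouble is the step where you invoke ``evenness of $\Phi$'' to kill the first-order-in-$\sqrt\delta$ contribution. In the scalar heat-flow lemma this works because one only needs the convolution at a single point. Here, after the substitution $z'=\sqrt\delta\,w$, you need the full operator-valued Weyl symbol
\[
z\longmapsto \int\bigl[Q_R(T_\delta)(\sqrt\delta\,w)-Q_R(T_\delta)(0)\bigr]\,\mathcal{W}(\varphi)(z-w)\,dw,
\]
not just its value at $z=0$. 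The first-order Taylor term contributes $\sqrt\delta\,\sum_j(\partial_jQ)(0)\otimes\int w_j\, q(w)\,dw$, and the second tensor factor has Weyl symbol $z_j$ (a position or momentum operator), hence is unbounded. Thus the splitting into first- and second-order pieces does not take place inside $B(L^2)$: the individual terms are unbounded even though their sum is bounded, and no estimate of the claimed type follows. The conjugation identity $Q(z')=\rho(z')^*Q(0)\rho(z')$ shows that all $Q(z')$ are unitarily equivalent, but it does not make $(\partial_jQ)(0)$ vanish, so it does not repair this.

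The paper avoids the obstacle by \emph{not} proving your operator-norm bound. Instead it establishes two asymmetric estimates on $\|\cdot-\lambda I\|$. One direction uses the positivity $q(w)\ge0$ together with the conjugation identity to obtain $\|\mathcal{T}_R(T_\delta)^\otimes-\lambda I\otimes I\|\le\|Q_R(T_\delta)(0)-\lambda I\|$ with no error term. For the other direction one tests against product vectors $f\otimes\varphi$: then $\langle q(w)\varphi,\varphi\rangle=|V_\varphi\varphi(w)|^2=e^{-\pi|w|^2}$ is an honest even scalar Gaussian, the expression collapses to the \emph{scalar} convolution $\big(\Phi_\delta*\langle Q(\cdot)f,f\rangle\big)(0)$, and Lemma~\ref{lem:conv-aux} applies verbatim to produce the $O(\delta)$ error. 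Lemma~\ref{lem:norm-bd-edges} then converts these one-sided norm inequalities (for suitable $\lambda$) into the spectral-edge comparison. The evenness enters only at this scalar level, which is precisely what your outline lacks a substitute for.
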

\proof \textbf{Step 1.}
Note that $Q_R(T_{ 0} )(0)=\mathcal{T}_R(T_{ 0})$. In addition, by \eqref{eq_x2},
\begin{equation}\label{eq:Sz-S0}
Q_R( T_{ \delta})(z')=\rho(z')^\ast Q_R (T_{ \delta}) (0)\rho(z').
\end{equation}
Consequently, $\sigma\big(Q_R(T_{ \delta}) (z)\big)=\sigma\big(Q_R(T_{
  \delta})(0)\big)$, as  $Q_R(T_{ \delta})(z)$ and $Q_R(T_{ \delta})(0)$ are unitarily equivalent.

\smallskip

\noindent \textbf{Step 2.} We note that $Q_R(T_{ \delta})(z)$ is self-adjoint for every $z\in\R^{2d}$ because $Q_R(T_{ \delta})(0)$ is. Let $f\in L^2(\R^d)$ with $\|f\|_{2}=1$ and fix $\lambda\in\R$. Then, \eqref{eq:Sz-S0} shows that
\begin{align*}
\langle (Q_R(T_{ \delta})(z)-\lambda I) f,f\rangle &=\langle (Q_R(T_{ \delta})(0)-\lambda I)\rho(z)f,\rho(z)f\rangle \\
&\leq \|Q_R(T_{ \delta})(0)-\lambda I\|_{B(L^2(\R^{d}))}.
\end{align*}
Hence,
$(Q_R(T_{ \delta})(z)-\lambda I)\otimes q(z/\sqrt{\delta})\leq
\|Q_R(T_{ \delta})(0)-\lambda I\|_{B(L^2(\R^{d}))} I\otimes
q(z/\sqrt{\delta})$. Next, we 
invoke Lemmas \ref{lemma_q} 
and  \eqref{eq:tensor-T-Q} of Lemma~\ref{lemma_new} to obtain 
\begin{align}
\mathcal{T}_R(T_{\delta})^\otimes-\lambda I\otimes I &=\frac{1}{\delta^d}\int_{\R^{2d}}Q_R(T_{ \delta})(z)\otimes q(z/\sqrt{\delta})dz-\lambda I\otimes I
\notag
\\
&=\frac{1}{\delta^d}\int_{\R^{2d}}(Q_R(T_{ \delta})(z)-\lambda I)\otimes q(z/\sqrt{\delta})dz \notag
\\
&\leq\|Q_R(T_{ \delta})(0)-\lambda I\|_{B(L^2(\R^{d}))}\frac{1}{\delta^d}\int_{\R^{2d}}I\otimes q(z/\sqrt{\delta})dz \notag
\\
&=\|Q_R(T_{ \delta})(0)-\lambda I\|_{B(L^2(\R^{d}))} I\otimes I . \label{eq:T-I}
\end{align}
Repeating this argument
for $\lambda I\otimes I-\mathcal{T}_R(T_{\delta})^\otimes$ shows that 
\begin{equation}\label{eq:T_leq_S}
\|\mathcal{T}_R(T_{\delta})^\otimes-\lambda I\otimes I \|_{B(L^2(\R^{2d}))}\leq \|Q_R(T_{ \delta})(0)-\lambda I\|_{B(L^2(\R^{d}))}.
\end{equation}

\smallskip

\noindent \textbf{Step 3.} For a lower estimate let $f\in \lrd$, $\|f\|_2=1$, and
apply Lemma~\ref{lemma_new}:
\begin{align*}
\|\mathcal{T}_R (T_{\delta})^\otimes &-  \lambda I\otimes I\|_{B(L^2(\R^{2d}))}
\\
 &\geq \left|\Big\langle (\mathcal{T}_R(T_{\delta})^\otimes-\lambda
   I\otimes I)(f\otimes \varphi),(f\otimes \varphi)\Big\rangle\right| 
\\
&=\left|\frac{1}{\delta^d}\int_{\R^{2d}}\langle Q_R(T_\delta)(z)f,f\rangle \, \langle q(z/\sqrt{\delta}) \varphi ,
     \varphi \rangle   dz\, - \lambda \|f\|_2^2 \|\varphi \|_2^2 \right|   = (*)      \, .
\end{align*}
Since the \stft\ of a Gaussian is again a Gaussian, we find that  $\langle q(z/\sqrt{\delta}) \varphi ,
     \varphi \rangle  = 
|\langle\varphi,\rho(z/\sqrt{\delta})\varphi\rangle|^2 =  e^{-\pi
  |z|^2/\delta}=\delta^d\Phi_\delta(z)$, and that the last
expression involves a convolution with the scaled Gaussian $\Phi _\delta$.
We can continue as follows: 
\begin{align*}
 (*)  &= \left|\int_{\R^{2d}}\langle Q_R(T_{ \delta})(z)f,f\rangle
        \Phi_\delta( 0-z )dz \, - \lambda \right|
\\
&=\left|\big(\langle  Q_R(T_{ \delta})(\cdot) f,f\rangle \ast \Phi_\delta\big)(0)-\lambda\right|
\\
&\geq  \left|\langle  Q_R(T_{ \delta}) (0) f,f\rangle -\hspace{-0.05cm}\lambda\right|-\left\|\langle  Q_R(T_{ \delta}) (\cdot) f,f\rangle- \langle  Q_R(T_{ \delta})(\cdot) f,f\rangle\ast \Phi_\delta \right\|_\infty
\\
&\geq  \left|\langle  Q_R(T_{ \delta}) (0) f,f\rangle - \lambda\right| -  \sup_{\|h\|_2=1} \left\| \langle  Q_R(T_{ \delta}) (\cdot) h,h\rangle- \langle  Q_R(T_{ \delta})(\cdot) h,h\rangle\ast \Phi_\delta \right\|_\infty.
\end{align*}
In the first term we take the supremum over $f\in L^2(\R^d)$,
$\|f\|_2=1$; to the second term we apply
Lemma~\ref{lem:conv-aux}. In view of \eqref{eq:T_leq_S} this  leads to
\begin{align*}
\Big|\|\mathcal{T}_R(T_{\delta})^\otimes -\lambda I\otimes I\|_{B(L^2(\R^{2d}))}&- \|Q_R(T_{ \delta})(0)-\lambda I\|_{B(L^2(\R^{d}))}\Big|
\\
&\leq    \sup_{\|h\|_2=1}\left\|  \langle  Q_R(T_{ \delta})(\cdot) h,h\rangle- \langle  Q_R(T_{ \delta})(\cdot) h,h\rangle\ast \Phi_\delta  \right\|_\infty
\\
&\lesssim \delta   \sup_{\|h\|_2=1}\|\partial^2  \langle  Q_R(T_{ \delta}) (\cdot) h,h\rangle  \|_\infty.
\end{align*}
The deductions above together with the first part of Lemma~\ref{lem:norm-bd-edges}
(for an appropriate choice of $\lambda$) then show
\begin{align}\label{eq_d}
\big|\sigma_{\pm}\big(Q_R(T_{ \delta})(0)\big)-\sigma_\pm\big(\mathcal{T}_R(T_{\delta} )^\otimes\big)\big|\lesssim \delta   \sup_{\|h\|_2=1}\|\partial^2  \langle  Q_R(T_{ \delta}) (\cdot) h,h\rangle  \|_\infty.
\end{align}
\textbf{Step 4.}
It remains to further estimate the right-hand
side of \eqref{eq_d}. Let $\|h\|_2=1$.
Using \eqref{eq:F-Q}, the partial derivatives of $\langle Q_R(T_\delta )(z') h,h\rangle$ are given  as follows:
with the notation $z'_i=(x'_i, \omega'_i) \in \mathbb{R}^2$ and $i=1,\ldots,d$,
let $u'_i$ be either $x'_i$ or $\omega'_i$. Then
\begin{align*}
\big|\partial_{u_i'}\partial_{u_j'}&\big\langle Q_R(T_\delta))(z')h,h\big\rangle \big|=\big|\partial_{u_i'}\partial_{u_j'}\mathcal{U}\mathcal{F}\big(e^{\frac{\pi\delta}{2}|\cdot|^2}\mathcal{U}(\theta_R \widehat{G_\delta})\langle\rho(\cdot)h,h\rangle\big)(z')\big|
\\
&=4\pi^2 \big|\mathcal{U}\mathcal{F}\big(X_{i'}X_{j'}\,e^{\frac{\pi\delta}{2}|\cdot|^2}\mathcal{U}(\theta_R \widehat{G_\delta})\langle\rho(\cdot)h,h\rangle\big)(z')\big|,
\end{align*}
for suitable indices $i',j' \in \{1,\ldots,2d\}$.
We apply
Theorem~\ref{lem:bdd}(ii) and obtain
\begin{align*}
 \|\partial^2   \langle  Q_R(T_{ \delta})& (\cdot) h,h\rangle  \|_\infty 
  \\
 &\lesssim\sup_{z'\in\R^{2d}}  \sum_{i,j=1,...,2d}\left| \int_{\R^{2d}}\mathcal{U}\big( M_{-z'}e^{\frac{\pi\delta}{2} |\cdot|^2}X_iX_j \theta_R\widehat{G_\delta}\big)(z) \langle\rho(z)h,h\rangle dz\right|
  \\
 &\lesssim   \sup_{z'\in\R^{2d}}  \sum_{i,j=1,...,2d}\left\|\mathcal{U}\big( M_{-z'} e^{\frac{\pi\delta}{2} |\cdot|^2}    X_i X_j \theta_R  \widehat{G_\delta} \big) \right\|_{W^{\infty,1}}
   \\
 &\lesssim    \sum_{i,j=1,...,2d}\left\|  e^{\frac{\pi\delta}{2}
   |\cdot|^2}    X_i X_j \theta_R  \widehat{G_\delta}
   \right\|_{W^{\infty,1}} \, .
\end{align*}
For  each of the terms we  use the product
property~\eqref{eq:c8} and obtain
$$
\|\partial^2   \langle  Q_R(T_{ \delta}) (\cdot) h,h\rangle  \|_\infty 
   \lesssim      \sum_{i,j=1,...,2d} \Big\|  e^{\frac{\pi\delta}{2}
     |\cdot|^2}    \theta_R \Big\|_{\mathcal{F}L^1}\Big\| X_i
   X_j\widehat{G_\delta}\Big\|_{W^{\infty,1}}\, .
   $$
By Lemma \ref{lemma_x}, $\|  e^{\frac{\pi\delta}{2}
	|\cdot|^2}    \theta_R \|_{\mathcal{F}L^1} \lesssim 1$ for $R\leq \delta^{-1/2}$, whereas, by \eqref{eq:X-implies-weight},
\begin{align}\label{eq:XX-weight}
\big\| X_i X_j\widehat{G_\delta}\big\|_{W^{\infty,1}}\lesssim  \big\| \widehat{G_\delta} \big\|_{W^{\infty,1}_{0,2}}
\asymp \|G_\delta\|_{M^{\infty,1}_{0,2}}.
\end{align}
In conclusion, we have shown that 
\begin{align*}
 \big\|\partial^2  &\langle  Q_R(T_{ \delta})(\cdot) h,h\rangle  \big\|_\infty
 \lesssim \|G_\delta\|_{M^{\infty,1}_{0,2}},
 \end{align*}
which, combined with \eqref{eq_d}, completes the proof.
\pbox

\subsection{Differentiation of the symbol}
\begin{lemma}\label{lem:Sd-T0}
Assume that $G_\delta\in M^{\infty,1}_{0,2}(\R^{2d})$ is real-valued, $\delta\mapsto G_\delta$ is differentiable, $\partial_\delta  G_\delta\in M^{\infty,1}(\R^{2d})$ for $0<\delta < \delta _0 <1$, and $0<R\leq\delta^{-1/2}$. Then
\begin{equation}\label{eq:Sd-T0}
\big\| Q_R(T_{ \delta})(0)- \mathcal{T}_R(T_{0}) \big\|_{B(L^2(\R^d))}\lesssim  \delta\cdot\sup_{|t|<\delta_0}\left(\left\|  G_t\right\|_{M^{\infty,1}_{0,2}} +\left\| \partial_t G_t\right\|_{ M^{\infty,1}} \right).
\end{equation}
\end{lemma}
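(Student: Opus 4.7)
\emph{Proof plan.} The strategy is to recognize that both $Q_R(T_\delta)(0)$ and $\mathcal{T}_R(T_0)$ are operators of the form $\int_{\R^{2d}} \mathcal{U}\widehat{\sigma}(z)\rho(z)\,dz$, so their difference is again a Weyl transform whose operator norm is controlled by the $M^{\infty,1}$ norm of its symbol (Theorem~\ref{lem:bdd}(i)), equivalently by the $W^{\infty,1}$ norm of the corresponding Fourier side. Since $|z|^2$ is invariant under $\mathcal{U}$, the Fourier transform of the Weyl symbol of $Q_R(T_\delta)(0)$ is simply $e^{\frac{\pi\delta}{2}|z|^2}\theta_R\widehat{G_\delta}$, while that of $\mathcal{T}_R(T_0)$ is $\theta_R\widehat{G_0}$. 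Hence the proof reduces to establishing
\begin{equation*}
\big\|e^{\frac{\pi\delta}{2}|\cdot|^2}\theta_R\widehat{G_\delta}-\theta_R\widehat{G_0}\big\|_{W^{\infty,1}}\lesssim \delta\cdot\sup_{|t|<\delta_0}\big(\|G_t\|_{M^{\infty,1}_{0,2}}+\|\partial_t G_t\|_{M^{\infty,1}}\big).
\end{equation*}

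The next step is to split the difference by adding and subtracting $\theta_R\widehat{G_\delta}$, yielding
\begin{equation*}
\theta_R\big(\widehat{G_\delta}-\widehat{G_0}\big)+\big(e^{\frac{\pi\delta}{2}|\cdot|^2}-1\big)\theta_R\widehat{G_\delta}.
\end{equation*}
For the first term, the product estimate \eqref{eq:c8} gives a bound by $\|\theta_R\|_{\mathcal{F}L^1}\|\widehat{G_\delta}-\widehat{G_0}\|_{W^{\infty,1}}$. Since $\|\widehat{\theta_R}\|_{L^1}=\|\widehat{\theta}\|_{L^1}$ by dilation and $\|\widehat{G_\delta}-\widehat{G_0}\|_{W^{\infty,1}}\asymp\|G_\delta-G_0\|_{M^{\infty,1}}$ by \eqref{eq:c6}, the fundamental theorem of calculus applied to $t\mapsto G_t$ (valid thanks to the hypothesis $\partial_t G_t\in M^{\infty,1}$) gives $\|G_\delta-G_0\|_{M^{\infty,1}}\leq \delta\sup_{|t|<\delta_0}\|\partial_t G_t\|_{M^{\infty,1}}$.

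For the second term, I would extract a factor of $\delta$ by writing $e^{\frac{\pi\delta}{2}|z|^2}-1=\frac{\pi\delta}{2}|z|^2 h_\delta(z)$ with $h_\delta(z):=\int_0^1 e^{s\frac{\pi\delta}{2}|z|^2}\,ds$, and then regroup factors as
\begin{equation*}
\big(e^{\frac{\pi\delta}{2}|z|^2}-1\big)\theta_R\widehat{G_\delta}=\frac{\pi\delta}{2}\,\big(\theta_R h_\delta\big)\cdot\big(|z|^2\widehat{G_\delta}\big).
\end{equation*}
Another application of \eqref{eq:c8} bounds the $W^{\infty,1}$ norm by $\delta \|\theta_R h_\delta\|_{\mathcal{F}L^1}\cdot \||z|^2\widehat{G_\delta}\|_{W^{\infty,1}}$. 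The second factor is controlled by $\|G_\delta\|_{M^{\infty,1}_{0,2}}$ via \eqref{eq:XX-weight}. For the first factor, the triangle inequality under the integral gives $\|\theta_R h_\delta\|_{\mathcal{F}L^1}\leq\int_0^1\|e^{s\frac{\pi\delta}{2}|\cdot|^2}\theta_R\|_{\mathcal{F}L^1}\,ds$, and since $s\delta R^2\leq \delta R^2\leq 1$ for $s\in[0,1]$ and $R\leq\delta^{-1/2}$, each integrand is $\lesssim 1$ by Lemma~\ref{lemma_x}.

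The main obstacle is the second term, where the exponential $e^{\frac{\pi\delta}{2}|z|^2}$ could in principle blow up on the support of $\theta_R$; taming it is precisely what the hypothesis $R\leq\delta^{-1/2}$ is for, and the decomposition above allows Lemma~\ref{lemma_x} to be used with parameter $s\delta$ uniformly in $s\in[0,1]$. Adding the two estimates and invoking Theorem~\ref{lem:bdd}(i) completes the proof.
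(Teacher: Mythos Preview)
Your argument is correct and follows a genuinely different decomposition from the paper. The paper applies the fundamental theorem of calculus to the full STFT $H(t,x,\omega)=V_\varphi\big(e^{\frac{\pi t}{2}|\cdot|^2}\theta_R\widehat{G_t}\big)(x,\omega)$, differentiates under the integral (carefully justified by constructing integrable majorants), and thereby obtains simultaneously the two terms $|\cdot|^2 e^{\frac{\pi t}{2}|\cdot|^2}\theta_R\widehat{G_t}$ and $e^{\frac{\pi t}{2}|\cdot|^2}\theta_R\widehat{\partial_t G_t}$, which are then estimated via \eqref{eq:c8} and Lemma~\ref{lemma_x}. You instead add and subtract $\theta_R\widehat{G_\delta}$, separating the $\delta$-dependence coming from $G_\delta$ from that coming from the Gaussian factor, and handle each piece with its own integral representation; this has the pleasant feature that the first piece carries no exponential at all (so only $\|\widehat{\theta}\|_{L^1}$ is needed there) and that in the second piece the weight $|\cdot|^2$ lands on $\widehat{G_\delta}$ at the fixed value $\delta$ rather than on all $\widehat{G_t}$. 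One small caveat: your step $\|G_\delta-G_0\|_{M^{\infty,1}}\le\delta\sup_{|t|<\delta_0}\|\partial_t G_t\|_{M^{\infty,1}}$ still requires passing the $t$-derivative through the STFT (the hypothesis is pointwise differentiability in $\delta$, cf.\ the footnote to Theorem~\ref{thm:main-general}), which is exactly the kind of interchange the paper justifies with dominated convergence; this is routine but should be made explicit in a final write-up.
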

\proof
Recall that
$$
Q_R(T_{ \delta})(0)-\mathcal{T}_R(T_{0}) = \int _{\rdd } e^{\pi \delta
  |z|^2/2} \cU (\theta _R \widehat{G_\delta })(z) \, \rho (z) \, dz - 
\int _{\rdd } \cU (\theta _R \widehat{G_0 })(z) \, \rho (z) \, dz
\, .
$$
Using  Theorem~\ref{lem:bdd}~(ii),  we estimate the operator norm by 
$$
\big\| Q_R(T_{ \delta})(0)-\mathcal{T}_R(T_{0})\big\|_{B(L^2(\R^d))} \lesssim 
 \Big\|e^{\frac{\pi\delta}{2} |\cdot|^2} \theta_R  \widehat{G_\delta}-
 \theta_R  \widehat{G_0}\Big\|_{W^{\infty,1}} \, .
 $$
Although $\widehat{G_\delta }$ is a tempered distribution, the \stft\  $H(\delta,x,\omega) \\ =V_\varphi \big(e^{\frac{\pi\delta}{2} |\cdot|^2} \theta_R  \widehat{G_\delta}\big)(x,\omega)$, $x,\omega\in\R^{2d}$,
is a smooth function and therefore we may express it as  
$$
H(\delta,x,\omega)-H(0,x,\omega)=\int_0^\delta  \partial_tH(t,x,\omega)dt.
$$ 
Then by the definition of the $W^{\infty ,1}$-norm we have 
 \begin{align*}
  \Big\|e^{\frac{\pi\delta}{2} |\cdot|^2} \theta_R  \widehat{G_\delta}-  \theta_R  \widehat{G_0}\Big\|_{W^{\infty,1}}
 &= \int_{\R^{2d}}\sup_{\omega\in \R^{2d}}\left|H(\delta,x,\omega)-H(0,x,\omega) \right|dx
 \\
 &\leq  \int_0^\delta \int_{\R^{2d}} \sup_{\omega\in\R^{2d}} \left| \partial_t H(t,x,\omega)\right|dxdt
 \\
 &\leq \delta \sup_{|t|\leq \delta} \int_{\R^{2d}} \sup_{\omega\in\R^{2d}} \left| \partial_tH(t,x,\omega)\right|dx.
\end{align*}
Spelling out $\partial_tH(t,\cdot )$ explicitly and using the identity
$V_{\widehat{g}}\widehat{f}(x,\omega)=e^{-2\pi i
  x\cdot\omega}V_gf(-\omega,x)$  with $\widehat{f} = e^{\pi \delta |\cdot |^2/2}
\theta _R \widehat{G_\delta}$ and $\widehat{g} = \widehat{\varphi }=\varphi $
gives
\begin{align}
 \partial_t H(t,x,\omega )&=e^{-2\pi i x\cdot\omega}\partial_tV_\varphi\big(\mathcal{F}^{-1}\big( e^{\frac{\pi t}{2}|\cdot|^2}\theta_R\big)\ast G_t\big)(-\omega,x)\notag
 \\
 &=e^{-2\pi i x\cdot \omega}  V_\varphi\Big( \mathcal{F}^{-1}\big( \partial_te^{\frac{\pi t}{2}|\cdot|^2}\theta_R\big) \ast\hspace{-0.02cm} G_t\Big)(-\omega,x) \notag
 \\
 &\hspace{1.66cm}+e^{-2\pi i x\cdot \omega}  V_\varphi\Big(\mathcal{F}^{-1}\big( e^{\frac{\pi t}{2}|\cdot|^2}\theta_R\big) \ast  \partial_tG_t\Big)(-\omega,x)\notag
\\
& = \frac{\pi }{2}V_\varphi\left(|\cdot|^2  e^{ \frac{\pi t}{2}|\cdot|^2 }\theta_R\widehat{G_t}\right)(x,\omega )+V_\varphi\left(   e^{ \frac{\pi t}{2}|\cdot|^2 } \theta_R\widehat{\partial_t G_t}\right)(x,\omega).\label{eq:h-explicit}
\end{align}
In the calculations above, we interchanged integration (hidden in
$V_\varphi $)  and differentiation twice to obtain the second
equality. To justify this, we construct two integrable majorants and
apply the Leibniz integral rule. We  note that,  by regularity and support
assumptions on $\theta$,   
$$
\max\left\{\big|\mathcal{F}^{-1}\big(  e^{\frac{\pi t}{2}|\cdot|^2}\theta_R\big)(z)\big|,\big|\mathcal{F}^{-1}\big( |\cdot|^2 e^{\frac{\pi t}{2}|\cdot|^2}\theta_R\big)(z)\big|\right\}\leq C_R (1+|z|)^{-(2d+1)},
$$ for a constant $C_R$ independent of $t$, as long as $|t|\leq 1$, but dependent on $R$.  Therefore, using the embedding $M^{\infty
  ,1}(\R^{2d}) \subseteq L^\infty(\R^{2d}) $ from~\eqref{eq:L-infty-M-infty}, 
\begin{align*}
\Big|\mathcal{F}^{-1}\big( |\cdot|^2 e^{\frac{\pi t}{2}|\cdot|^2}&\theta_R\big)(z) G_t(w-z) +\mathcal{F}^{-1}\big(  e^{\frac{\pi t}{2}|\cdot|^2}\theta_R\big)(z)\partial_t G_t(w-z)  \Big|
\\ &\lesssim C_R (1+|z|)^{-(2d+1)}
\sup_{|t|<\delta_0}\big(\|G_t\|_\infty+ \|  \partial_t G_t\|_\infty \big)
\\ &\lesssim C_R (1+|z|)^{-(2d+1)}
\sup_{|t|<\delta_0}\big(\|G_t\|_{M^{\infty,1}}+ \| \partial_t G_t\|_{M^{\infty,1}} \big),
\end{align*}
as well as 
\begin{align*}
\Big|\Big(\mathcal{F}^{-1}&\big( |\cdot|^2 e^{\frac{\pi t}{2}|\cdot|^2}\theta_R\big)\ast G_t(y) +\mathcal{F}^{-1}\big(  e^{\frac{\pi t}{2}|\cdot|^2}\theta_R\big)\ast\partial_t G_t(y) \Big)\rho(z)\varphi(y) \Big|
\\
&\lesssim \Big|\Big(\big\|\mathcal{F}^{-1}\big( |\cdot|^2 e^{\frac{\pi t}{2}|\cdot|^2}\theta_R\big)\ast G_t\big\|_\infty +\big\|\mathcal{F}^{-1}\big(  e^{\frac{\pi t}{2}|\cdot|^2}\theta_R\big)\ast\partial_t G_t\big\|_\infty \Big) T_x\varphi(y) \Big|
\\
& 
 \lesssim C_R \sup_{|t|<\delta_0}\big(\|G_t\|_{M^{\infty,1}}+\|\partial _t G_t\|_{M^{\infty,1}}\big) |T_x\varphi(y)|.
\end{align*}
This provides the required majorants.

Finally, applying \eqref{eq:c8} to the expression \eqref{eq:h-explicit} and using Lemma \ref{lemma_x}
and \eqref{eq:X-implies-weight}, we obtain
\begin{align*}
 \| Q_R(T_{ \delta})(0)-&\mathcal{T}_R(T_{0})\|_{B(L^2(\R^d))}
 \\
 &\lesssim   \delta \sup_{|t|\leq \delta}\left( \big\|    e^{
   \frac{\pi t}{2} |\cdot|^2 }  \theta_R \, |\cdot|^2\, \widehat{G_t} \big\|_{W^{\infty,1}} 
+ \big\|e^{ \frac{\pi t}{2}  |\cdot|^2 }\,\theta_R\, \widehat{\partial_t  G_t}\big\|_{W^{\infty,1}} \right)
\\
 &\lesssim   \delta  \sup_{|t|<\delta_0}\left(\big\|  |\cdot|^2 \, {\widehat{G_t}} \big\|_{W^{\infty,1}} 
+ \big\| \widehat{\partial_t G_t}\big\|_{W^{\infty,1}} \right)
\\
 &=  \delta  \sup_{|t|<\delta_0}\left(\left\|  {G_t} \right\|_{M^{\infty,1}_{0,2}} 
+  \left\|\partial_t G_t\right\|_{M^{\infty,1}} \right).
\end{align*}
\pbox
 
\subsection{Proof of Theorem~\ref{thm:main-general}}
We only consider the right spectral extreme value; the proof for left   spectral extreme value works exactly in the same way. 

Let us first assume that $\delta=\delta_1 > 0$ and $\delta_2=0$, and let $R=\delta^{-1/2}$. We estimate
\begin{align*}
\big|\sigma_+&(T_\delta)-  \sigma_+(T_0)\big|
  \leq  \big|\sigma_+ (T_\delta)-\sigma_+(\mathcal{T}_R(T_{\delta}))\big|+
\big|\sigma_+(\mathcal{T}_R(T_{\delta}))-\sigma_+\big(\mathcal{T}_R(T_{\delta})^\otimes\big)\big|
 \\
 & +\big|\sigma_+\big(\mathcal{T}_R(T_{\delta})^\otimes\big)-\sigma_+\big(Q_R(T_{ \delta})(0)\big)\big| +\big|\sigma_+\big(Q_R(T_{ \delta})(0)\big)-\sigma_+(\mathcal{T}_R(T_{0}))\big| 
 \\
 &+\big|\sigma_+(\mathcal{T}_R(T_{0}))-\sigma_+(T_0)\big|.
\end{align*}
The first and last terms can be bounded using
Lemma~\ref{lem:trunc-error}. The second term is zero by Theorem~\ref{thm:tensor}. Lemma~\ref{lem:Sd-Td_times}  bounds the
third term, and Lemma~\ref{lem:Sd-T0} bounds the  fourth term. 
Altogether, we arrive at
\begin{align}\label{eq:end-delta-pos}
\big|\sigma_+(T_\delta)-\sigma_+(T_0)\big| & \lesssim   \left(\frac{1}{R^2}+ \delta\right)(1+\delta_0)^d\sup_{|t|<\delta_0}\left(\left\|  G_t\right\|_{M^{\infty,1}_{0,2}} +\left\| \partial_t G_t\right\|_{ M^{\infty,1}} \right)\notag
\\
& =2  \delta(1+\delta_0)^d\sup_{|t|<\delta_0}\left(\left\|  G_t\right\|_{M^{\infty,1}_{0,2}} +\left\| \partial_t G_t\right\|_{ M^{\infty,1}} \right).
\end{align}
Hence Lipschitz continuity of the spectral extreme values at $0$ holds for $\delta>0$.

The general case $-\delta_0<\delta_1\leq\delta_2<\delta_0$, $\delta_0<1$, needs an additional argument for which we
introduce a new parameter $\theta $. Fix $\delta_1,\delta_2$ and  define $\widetilde{T}_\theta$, $0\leq\theta< \theta_0=\frac{\delta_0-\delta_1}{1+\delta_1}$, via its Weyl symbol $D_{\sqrt{1+\theta}}\widetilde{G}_\theta$, where
$\widetilde{G}_\theta=D_{\sqrt{1+\delta_1}}G_{(1+\delta_1)\theta+\delta_1}$. For this choice, we have $\widetilde{T}_0=T_{\delta_1}$, and 
$\widetilde{T}_{(\delta_2-\delta_1)/(1+\delta_1)}=T_{\delta_2}$. 
By the dilation property~\eqref{eq_c} and $0<1+\delta_1<2$ we get
\begin{align*}
\big\|\widetilde{G}_\theta\big\|_{M^{\infty,1}_{0,2}}&=\big\|D_{\sqrt{1+ \delta_1}} G_{(1+\delta_1)\theta+\delta_1}\big\|_{M^{\infty,1}_{0,2}}\lesssim   \big\|  G_{(1+\delta_1)\theta+\delta_1}\big\|_{M^{\infty,1}_{0,2}}
\leq   \sup_{|t|<\delta_0} \left\|  G_t\right\|_{M^{\infty,1}_{0,2}},
\end{align*}
as well as
\begin{align*}
\big\|\partial_\theta\widetilde{G}_\theta\big\|_{M^{\infty,1}}
&= \big\|D_{\sqrt{1+\delta_1}} \partial_{\theta}[G_{(1+\delta_1)\theta+\delta_1}] \big\|_{M^{\infty,1}} \lesssim  \sup_{|t|<\delta_0}\|\partial_{t}G_{t}\|_{M^{\infty,1}}.
\end{align*}
Applying the estimate from \eqref{eq:end-delta-pos} to $\widetilde{T}_\theta$ and choosing in particular $\theta=\frac{\delta_2-\delta_1}{1+\delta_1} $ shows
\begin{align*}
\big|\sigma_+(T_{\delta_1})-  \sigma_+(T_{\delta_2})\big|&=\big|\sigma_+\big(\widetilde{T}_0\big)-\sigma_+\big(\widetilde{T}_{(\delta_2-\delta_1)/(1+\delta_1)}\big)\big| \notag
\\
&\lesssim  |\theta | (1+\theta_0)^d  \sup_{|t|<\delta_0}\left(\left\|
     G_t\right\|_{M^{\infty,1}_{0,2}} +\left\| \partial_t
     G_t\right\|_{ M^{\infty,1}} \right) \, . 
\end{align*}
The Lipschitz dependence follows from
$$
|\theta | (1+\theta _0)^d = \frac{|\delta_2-\delta_1|}{1+\delta_1 } \,
\Big(1+\frac{\delta _0-\delta_1}{1+\delta_1}\Big)^d \leq |\delta_1-\delta_2 |
\frac{2^d}{(1-\delta _0)^{d+1}} \, ,
$$
since $\delta _0<1$ and $1+\delta_1 > 1-\delta_0$. All in all,
$$
\big|\sigma_+(T_{\delta_1})-  \sigma_+(T_{\delta_2})\big|\lesssim
{ |\delta_1-\delta_2 | }{(1-\delta _0)^{-(d+1)}}\sup_{|t|<\delta_0}\left(\left\|
     G_t\right\|_{M^{\infty,1}_{0,2}} +\left\| \partial_t
     G_t\right\|_{ M^{\infty,1}} \right),
$$
as claimed.
\pbox

\subsection{Proof of Theorem~\ref{thm:gap-bds}}
Since the desired conclusion should hold for sufficiently small $\delta$, we may assume that $\delta_0<1/2$.
We shall apply the Beckus-Bellissard lemma (Lemma~\ref{lem:beckus-bellissard}). Consider a polynomial $p(x)=x^2+\beta x+\gamma$, with $\beta,\gamma\in\R$. The Weyl symbol of  $p({T}_\delta)$ is given by $D_{\sqrt{1+\delta}}\widetilde{G}_\delta$, where
$$
\widetilde{G}_\delta=D_{1/\sqrt{1+\delta}}\left( (D_{\sqrt{1+\delta}}G_\delta)\, \sharp \,  (D_{\sqrt{1+\delta}}G_\delta)\right)+\beta\cdot G_\delta+\gamma.
$$
Since $M^{\infty,1}_{0,2}(\R^{2d})$ is a  Banach algebra with unit element $1$ (Theorem~\ref{lem:bdd}~(iii)), it follows from \eqref{eq_c} and $\delta_0<1/2$ that
\begin{align}\label{eq_xaaa1}
\begin{aligned}
\big\|\widetilde{G}_\delta\big\|_{M^{\infty,1}_{0,2}}&\lesssim  \left(\|G_\delta\|_{M^{\infty,1}_{0,2}}^2+|\beta|\, \|G_\delta\|_{M^{\infty,1}_{0,2}}+|\gamma|\right).
\end{aligned}
\end{align}
Let us use the notation $G \, \sharp _\delta \,   H:=D_{ {1}/{\sqrt{1+\delta}}}\left( (D_{\sqrt{1+\delta}}G)\, \sharp \,  (D_{\sqrt{1+\delta}}H)\right)$. If $G$ and $H$ are Schwartz functions, a computation with \eqref{eq_tpc} gives
\begin{align*}
\mathcal{F} \big( G \, \sharp _\delta \,   H \big) (z)
= \int_{\mathbb{R}^{2d}} \widehat{G}(z') \widehat{H}(z-z') e^{-\pi (1+\delta)i [z-z',z']}\, dz'.
\end{align*}
Hence, assuming for a moment that $G_\delta$ is a Schwartz function,
\begin{align*}
&\mathcal{F} \big(\partial_\delta \big( G_\delta \, \sharp _\delta \,   G_\delta \big)\big) (z)
=
\partial_\delta \big(
\mathcal{F} \big( G_\delta \, \sharp _\delta \,   G_\delta \big) \big)(z)
\\
&\qquad=
\int_{\mathbb{R}^{2d}} \widehat{\partial_\delta G_\delta}(z') \widehat{G_\delta}(z-z') e^{-\pi (1+\delta)i [z-z',z']}\, dz'\, 
\\
&\qquad\qquad+\int_{\mathbb{R}^{2d}} \widehat{G_\delta}(z') \widehat{\partial_\delta G_\delta}(z-z') e^{-\pi (1+\delta)i [z-z',z']}\, dz'\, 
\\
&\qquad\qquad +\pi i \sum_{j=1}^d \int_{\mathbb{R}^{2d}} z'_{j+d} \cdot \widehat{G_\delta}(z') \cdot (z-z')_j \cdot \widehat{G_\delta}(z-z') e^{-\pi (1+\delta)i [z-z',z']}\, dz'\,
\\
&\qquad\qquad -\pi i \sum_{j=1}^d \int_{\mathbb{R}^{2d}} z'_{j} \cdot \widehat{G_\delta}(z') \cdot (z-z')_{j+d} \cdot \widehat{G_\delta}(z-z') e^{-\pi (1+\delta)i [z-z',z']}\, dz'.
\end{align*}
Consequently,
\begin{align}\label{eq_hp}
\partial_\delta \big( G_\delta \, \sharp _\delta \,   G_\delta \big) =
\big( (\partial_\delta G_\delta) \, \sharp _\delta \,   G_\delta \big)
+
\big( G_\delta \, \sharp _\delta \, (\partial_\delta G_\delta )\big)
+ \sum_{k=1}^{2d} \lambda_k (\partial_{z_{j_k}} G_\delta) \, \sharp _\delta \, (\partial_{z_{j'_k}} G_\delta),
\end{align}
for suitable indices $j'_k,j_k \in \{0,\ldots,2d\}$ and $\lambda_k \in \mathbb{C}$, with $|\lambda_k|=1/2$. To see that  \eqref{eq_hp} is valid in general, we take a sequence $G_\delta^k\in \mathcal{S}(\R^{2d})$ such that $\partial_\delta G_\delta^k\in \mathcal{S}(\R^{2d})$,   $G_\delta^k\stackrel{w^\ast}{\rightarrow}G_\delta$ in $M^{\infty,1}_{0,2}(\R^{2d})$, and   $\partial_\delta G_\delta^k\stackrel{w^\ast}{\rightarrow}\partial_\delta G_\delta$ in $M^{\infty,1}(\R^{2d})$. For example, define $G_\delta^k(z):=\psi(z/k)\cdot (G_\delta\ast \phi_{k})(z)$, where $\psi\in\mathcal{S}(\R^{2d})$ is chosen such that $0\leq \psi\leq 1$, and $\psi(z)=1$ for $z\in B_1(0)$, and $\phi_{k}(z)=k^{2d}\phi(k z )$ for a mollifier $\phi$. 

Applying consecutively  \eqref{eq_c}, \eqref{eq:X-implies-weight}, and Theorem~\ref{lem:bdd}~(iii), it thus follows
\begin{align*}
	\|\partial_\delta (G_\delta  \, &\sharp _\delta \,   G_\delta)\|_{M^{\infty,1}}
	\\
	& \lesssim  
	 \|D_{\sqrt{1+\delta}} G_\delta  \,\sharp\, D_{\sqrt{1+\delta}} (\partial_\delta  G_\delta) \|_{M^{\infty,1}}\,	\\
	&\qquad +\max_{j,j'=1,\ldots,2d}\big\|(D_{\sqrt{1+\delta}}(\partial_{z_j} G_\delta))\,\sharp\,(  D_{\sqrt{1+\delta}}(\partial_{z_{j'}} G_\delta))\big\|_{M^{\infty,1}}
	\\
	& \lesssim  
	 \|G_\delta\|_{M^{\infty,1}}\left\| \partial_\delta G_\delta\right\|_{M^{\infty,1}} +\|G_\delta\|_{M^{\infty,1}_{0,1}}^2
	\\
	& \lesssim
	 \|G_\delta\|_{M^{\infty,1}}\left\| \partial_\delta G_\delta\right\|_{M^{\infty,1}}+\|G_\delta\|_{M^{\infty,1}_{0,2}}^2.
\end{align*}
Therefore,
\begin{align}\label{eq_bbb1}
		\big\|  {\partial_\delta}\widetilde{G}_\delta\big\|_{M^{\infty,1}}&\leq \|\partial_\delta (G_\delta  \, \sharp _\delta \,   G_\delta)\|_{M^{\infty,1}}+|\beta|\,\|\partial_\delta G_\delta \|_{M^{\infty,1}}\notag
		\\
		&\lesssim
		 \|G_\delta\|_{M^{\infty,1}}\left\| \partial_\delta G_\delta\right\|_{M^{\infty,1}}+\|G_\delta\|_{M^{\infty,1}_{0, 2}}^2
		+ |\beta| \, \|\partial_\delta G_\delta \|_{M^{\infty,1}}.
\end{align}
Let $\delta_1,\delta_2 \in (-\delta_0,\delta_0 )\subset (-1/2,1/2)$.
Combining Theorem~\ref{thm:main-general} with \eqref{eq_xaaa1} and \eqref{eq_bbb1},
\begin{align*}
 \big|\sigma_\pm(p(T_{\delta_1})) &- \sigma_\pm(p(T_{\delta_2}))\big|
\lesssim  |\delta_1-\delta_2| 
\sup_{|\delta|<\delta_0} \left( 
\| \widetilde{G}_\delta \|_{M^{\infty,1}_{0,2}} + 
\|\partial_\delta \widetilde{G}_\delta \|_{M^{\infty,1}}\right)
\\
& \lesssim   |\delta_1-\delta_2| 
\sup_{|\delta|<\delta_0}
\Big[\left(\|G_\delta\|^2_{M^{\infty,1}_{0,2}}+\|G_\delta\|_{M^{\infty,1}}\|\partial_\delta G_\delta\|_{M^{\infty,1}}\right)
\\
&\qquad\qquad\qquad\qquad\qquad\qquad +
|\beta|\left(\|G_\delta\|_{M^{\infty,1}_{0,2}}+\|\partial_\delta G_\delta\|_{M^{\infty,1}}\right)+|\gamma|\Big].
\end{align*}
In particular, if $|\beta|\leq 2\|T_0\|_{B(L^2(\R^d))}$, $|\gamma|\leq 5\|T_0\|^2_{B(L^2(\R^d))}$, then,
by Theorem \ref{lem:bdd}, 
$|\beta| \lesssim \|G_\delta\|_{M^{\infty,1}}$ and
$|\gamma| \lesssim \|G_\delta\|_{M^{\infty,1}}^2 \leq \|G_\delta\|_{M^{\infty,1}_{0,2}}^2$.
Hence,
\begin{equation*}
{\big|\sigma_\pm(p(T_{\delta_1}))\hspace{-1pt} - \hspace{-1pt}\sigma_\pm(p(T_{\delta_2}))\big|}\lesssim \hspace{-1pt} |\delta_1\hspace{-1pt}-\hspace{-1pt}\delta_2| \sup_{|\delta|<\delta_0}\hspace{-2pt}\left(\|G_\delta\|_{M_{0,2}^{\infty,1}}^2\hspace{-1pt}+\hspace{-1pt}\|G_\delta\|_{M^{\infty,1}}\|\partial_\delta G_\delta\|_{M^{\infty,1}}\right) .
\end{equation*}
Therefore, by Lemma \ref{lem:norm-bd-edges}, for $\delta_1\not=\delta_2$,
\begin{equation*}
	\frac{\big|\|p(T_{\delta_1})\|_{B(L^2)}\hspace{-1pt} - \hspace{-1pt}\|p(T_{\delta_2})\|_{B(L^2)}\big|}{|\delta_1\hspace{-1pt}-\hspace{-1pt}\delta_2|}\hspace{-1pt}\lesssim \hspace{-1pt}  \sup_{|\delta|<\delta_0}\hspace{-2pt}\left(\|G_\delta\|_{M_{0,2}^{\infty,1}}^2\hspace{-1pt}+\hspace{-1pt}\|G_\delta\|_{M^{\infty,1}}\|\partial_\delta G_\delta\|_{M^{\infty,1}}\right),
\end{equation*}
holds uniformly for all polynomials $p\in\mathcal{P}(T_0)$.
This shows that the number $C_{\mathcal{P}(T_0)}$ defined in Lemma \ref{lem:beckus-bellissard} satisfies
\begin{equation}\label{eq_ct}
	C_{\mathcal{P}(T_0)}\lesssim  \sup_{|t|<\delta_0}\left(\|G_t\|_{M_{0,2}^{\infty,1}}^2+\|G_t\|_{M^{\infty,1}}\|\partial_tG_t\|_{M^{\infty,1}}\right).
\end{equation}
In addition, reinspection of the previous estimates shows that $\{T_\delta\}_{|\delta|<\delta_0}$ is $(p2)$-Lipschitz continuous.
Hence, we can invoke Lemma \ref{lem:beckus-bellissard} and the conclusion follows from \eqref{eq_ct}.\pbox

\section{Gabor Frames}\label{sec:gabor}
We now apply the results on the Lipschitz continuity of
the spectral edges  to the Gabor frame operator.

Let $g\in M^1(\R^d)$ and $\Lambda\subset \R^{2d}$ be a relatively
separated set.  The frame operator of the associated set of
phase-space shifts is 
\[S_{g,\Lambda}f = \sum _{\lambda \in \Lambda } \langle f, \rho
(\lambda )g\rangle \rho (\lambda )g= \sum _{\lambda \in \Lambda }
q(\lambda ) f, \qquad f \in L^2(\mathbb{R}^d),\]
where $q$ is the rank-one projection \eqref{eq_q}. The Weyl symbol of $q(z)$ is just the shift $T_z\mathcal{W}(g)$ of the Wigner distribution of $g$ \eqref{eq_wigner}. Hence, the Weyl symbol of $S_{g,\Lambda }$ is
$$
\sigma_{g,\Lambda}=\sum_{\lambda\in\Lambda} T_\lambda \mathcal{W}(g).
$$
The spectral extreme values of $S_{g,\Lambda}$ and $S_{g,\alpha\Lambda}$ are
equal to the optimal frame bounds \eqref{eq:c1} of $\mathcal{G}(g,\Lambda)$ and
$\mathcal{G}(g,\alpha\Lambda)$ respectively. We set \[1/\alpha=\sqrt{1+\delta}.\]  The Weyl symbol corresponding to $S_{g,\alpha\Lambda}$ is
$$
\sigma _{g,\alpha \Lambda }=\sum_{\lambda\in\Lambda} T_{\alpha
  \lambda} \mathcal{W}(g) = D_{\sqrt{1+\delta }}\Big( \sum _{\lambda
  \in \Lambda } T_\lambda  D_{1/\sqrt{1+\delta}}\mathcal{W}(g)\Big)\, .
$$
Thus $\sigma _{g,\alpha \Lambda } = D_{\sqrt{1+\delta }} G_\delta $
with
$$
G_\delta = \sum _{\lambda
  \in \Lambda }   T_\lambda D_{1/\sqrt{1+\delta}}\mathcal{W}(g) \, .
$$
In order to apply Theorem~\ref{thm:main-general}, we need to calculate
the norms of $G_\delta$ and $\partial_\delta G_\delta$ in the corresponding weighted
Sj\"ostrand classes.

\begin{lemma} \label{sjosymbol}
Let $\Lambda\subset \R^{2d}$ be relatively separated and $0<\delta_0<1$. If $g\in M^1_2(\rd )$, then
\begin{enumerate}
\item[(i)] $\|  G_\delta\|_{M^{\infty,1}}\lesssim 
\emph{rel}(\Lambda)\cdot (1+\delta)^d  \cdot \|g\|_{M^1}^2$,$\qquad  \delta \in [0,\infty)$,
\item[(ii)] $
\|  G_\delta\|_{M^{\infty,1}_{0,2}}\lesssim \emph{rel}(\Lambda)\cdot  (1-\delta_0)^{-1}\cdot\|g\|_{M^1_2}^2$,$\qquad  \delta \in (-\delta_0,\delta_0)$,   
\item[(iii)]   $\|\partial_\delta G_\delta\|_{M^{\infty,1}}\lesssim \emph{rel}(\Lambda)\cdot  (1-\delta_0)^{-1}\cdot\|g\|_{M^1_2}^2$, $\qquad  \delta \in (-\delta_0,\delta_0)$.
\end{enumerate}
\end{lemma}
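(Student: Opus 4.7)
\proof[Proof proposal]
The plan is to represent $G_\delta$ as a convolution $\mu \ast H_\delta$, where $\mu := \sum_{\lambda\in\Lambda}\delta_\lambda$ is the atomic measure supported on $\Lambda$ and $H_\delta := D_{1/\sqrt{1+\delta}}\mathcal{W}(g)$, and then chain four ingredients already at our disposal in Section~\ref{sec:tools}: (a) $\|\mu\|_{M^\infty(\R^{2d})} \lesssim \mathrm{rel}(\Lambda)$ from Lemma~\ref{lemma_mu}; (b) the convolution estimate~\eqref{eq:c7}, $\|\mu \ast h\|_{M^{\infty,1}_{0,s}} \lesssim \|\mu\|_{M^\infty} \|h\|_{M^1_{0,s}}$; (c) the dilation estimate~\eqref{eq_cx} for $D_{1/\sqrt{1+\delta}}$ on $M^1_{0,s}$; and (d) Lemma~\ref{lem:aux-W(G)}, which yields $\|\mathcal{W}(g)\|_{M^1_{s,t}} \lesssim \|g\|_{M^1_{s+t}}^2$.

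Since $\sum_\lambda T_\lambda h = \mu \ast h$, combining (a)--(d) gives
$$
\|G_\delta\|_{M^{\infty,1}_{0,s}} \lesssim \mathrm{rel}(\Lambda) \cdot \max\{(1+\delta)^{d/2}, (1+\delta)^{-s/2}\} \cdot \|g\|_{M^1_s}^2, \qquad s \geq 0.
$$
For (i) we take $s=0$ and $\delta \geq 0$, so the maximum equals $(1+\delta)^{d/2} \leq (1+\delta)^d$. For (ii) we take $s=2$ and $|\delta| < \delta_0 < 1$: the factor $(1+\delta)^{d/2}$ is dominated by the absolute constant $2^{d/2}$, while $(1+\delta)^{-1} \leq (1-\delta_0)^{-1}$, and these combine to the claimed $(1-\delta_0)^{-1}$.

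For (iii), the essential step is the derivative of the dilation, which a direct chain-rule computation yields as
$$
\partial_\delta \big[ D_{1/\sqrt{1+\delta}} f \big] = -\frac{1}{2(1+\delta)} \, D_{1/\sqrt{1+\delta}} \Big( \sum_{i=1}^{2d} X_i \partial_i f \Big).
$$
Interchanging differentiation with the sum over $\lambda$ (justified by dominated convergence in the modulation-space norms, once the bounds below are in place) then gives the analogous identity for $\partial_\delta G_\delta$ with $f = \mathcal{W}(g)$. Applying in succession (b), then (c) with $s=0$ (whose prefactor is bounded by a constant on $(-\delta_0,\delta_0)$), then the embedding~\eqref{eq:ju1}, $\|X_i \partial_i h\|_{M^1} \lesssim \|h\|_{M^1_{1,1}}$, and finally Lemma~\ref{lem:aux-W(G)} with $s=t=1$, $\|\mathcal{W}(g)\|_{M^1_{1,1}} \lesssim \|g\|_{M^1_2}^2$, yields the claim, with the prefactor $1/(1+\delta)$ contributing the factor $(1-\delta_0)^{-1}$.

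The only mild technical obstacle is justifying the interchange of $\partial_\delta$ with the infinite sum defining $G_\delta$; this is standard once the uniform dominating bounds above are in place, and can be carried out either by dominated convergence at the level of the \stft\ or by differentiating in the sense of distributions and identifying the result. All remaining steps are direct chainings of the tools collected in Section~\ref{sec:tools}. \pbox
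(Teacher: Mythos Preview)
Your proposal is correct and follows essentially the same route as the paper: write $G_\delta = \mu \ast D_{1/\sqrt{1+\delta}}\mathcal{W}(g)$ and chain Lemma~\ref{lemma_mu}, the convolution estimate~\eqref{eq:c7}, the dilation bound~\eqref{eq_cx}, and Lemma~\ref{lem:aux-W(G)}, while for~(iii) the paper performs the identical chain-rule computation of $\partial_\delta G_\delta$ and then invokes~\eqref{eq:ju1} together with Lemma~\ref{lem:aux-W(G)} at $s=t=1$. One minor slip to fix: since $\mathcal{W}(g)$ lives on $\mathbb{R}^{2d}$, the dilation estimate~\eqref{eq_cx} produces $\max\{(1+\delta)^{d},(1+\delta)^{-s/2}\}$ rather than $\max\{(1+\delta)^{d/2},(1+\delta)^{-s/2}\}$, though your subsequent bounds absorb this harmlessly.
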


  \proof
  Let $\mu = \sum _{\lambda \in \Lambda } \delta _\lambda $. Then $G_\delta = \mu \ast D_{1/\sqrt{1+\delta }}\W
  (g)$ and  $\|\mu\|_{ M^\infty}\lesssim \text{rel}(\Lambda)$ by Lemma
  \ref{lemma_mu}. 
   
Furthermore, since $g\in M^1_2(\rd )$, its Wigner distribution satisfies
 \[\W(g) \in M^1_{0,2}(\rdd ),\] 
as a consequence of Lemma~\ref{lem:aux-W(G)}. The
convolution relation~\eqref{eq:c7} and the dilation
property~\eqref{eq_cx} on $\rdd$ show  that 
\begin{align*}
	\|G_\delta\|_{M^{\infty,1}} &\lesssim \| \mu\|_{M^\infty}\|
	D_{1/\sqrt{1+\delta }} \W (g)\|_{M^1} 
	\lesssim \text{rel}(\Lambda)\cdot (1+\delta)^d \cdot\|g\|_{M^1}^2,
\end{align*}
as claimed in (i). For (ii) we argue similarly:
\begin{align*}
  \|G_\delta\|_{M^{\infty ,1}_{0,2}} &\lesssim \| \mu\|_{M^\infty}\|
D_{1/\sqrt{1+\delta }} \W (g)\|_{M^1_{0,2}} 
\lesssim \max \big\{1,   ({1+\delta} )^{-1} \big\}
\| \mu\|_{M^\infty}\|g\|_{M^1_2}^2 \\
& \lesssim (1-\delta_0)^{-1}\cdot \text{rel}(\Lambda)\cdot\|g\|_{M^1_2}^2.
\end{align*}
It remains to determine  $\partial_\delta G_\delta$ and estimate
its norm.
First, we note
\begin{align*}
\partial_\delta G_\delta(z)&=\partial_{\delta}\left( \sum_{\lambda\in\Lambda}\mathcal{W}(g)\left(\frac{z-\lambda}{\sqrt{1+\delta}}\right)\right)
\\
&=  \sum_{\lambda\in\Lambda}\sum_{i=1}^{2d}-\frac{z_i-\lambda_i}{2(1+\delta)^{3/2}}\partial_i\mathcal{W}(g)\left(\frac{z-\lambda}{\sqrt{1+\delta}}\right)
\\
  &  =-\frac{1}{2(1+\delta) }\ \mu \ast
  D_{1/\sqrt{1+\delta}}\left(\sum_{i=1}^{2d}X_i\partial_i
  \mathcal{W}(g)  \right)(z)\, . 
\end{align*}
Using~\eqref{eq:c7} and~\eqref{eq_cx} as above, we prove (iii):
\begin{align*}
\|\partial_\delta G_\delta \|_{M^{\infty,1}}&\lesssim (1-\delta_0)^{-1}
\cdot\|\mu\|_{M^\infty}\sum_{i=1}^{2d}\| X_i \partial_i \mathcal{W}(g)\|_{M^1}
\\
&\lesssim (1-\delta_0)^{-1}
\cdot \text{rel}(\Lambda) \cdot \|\mathcal{W}(g)\|_{M_{1,1}^1}
\\
&\lesssim (1-\delta_0)^{-1} \cdot 
\text{rel}(\Lambda)
\cdot
\|g\|_{M_2^1}^2\,  .
\end{align*}
In the last step we have applied \eqref{eq:ju1} and Lemma \ref{lem:aux-W(G)}.
\pbox

An application of Theorem~\ref{thm:main-general} now allows us to show the Lipschitz continuity of the frame bounds of  $\mathcal{G}(g,\alpha\Lambda)$.

\medskip

\noindent \textbf{Proof of Theorem~\ref{blt}:} 
Recall that $\alpha^{-1}=\sqrt{1+\delta}$ with $\delta \in (-1,+\infty)$. Suppose first that $\delta \leq 1/2$ and set
$\delta_0=\max\{1/2,1-\alpha_0^2\}$. Then $0<\delta_0<1$. Let us check that $\delta \in (-\delta_0,\delta_0)$. By assumption, $\delta \leq 1/2 \leq \delta_0$. In addition,
$\sqrt{1+\delta}=1/\alpha > \alpha_0$, and consequently
$\delta > \alpha_0^2-1$, which shows that $-\delta < 1-\alpha_0^2 \leq \delta_0$. We now invoke Theorem~\ref{thm:main-general} and Lemma~\ref{sjosymbol}  to conclude that 
\begin{align*}
 \left|\sigma_\pm(S_{g,\Lambda})-\sigma_\pm(S_{g,\alpha\Lambda})\right|&\lesssim |\delta|\cdot(1-\delta_0)^{-(d+1)}\cdot\sup_{|t|<\delta_0}\left(\|G_t\|_{M^{\infty,1}_{0,2}}+\|\partial_tG_t\|_{M^{\infty,1}}\right) 
 \\
 &\lesssim |\delta|\cdot\text{rel}(\Lambda)\cdot (1-\delta_0)^{-(d+2) } \cdot\|g\|_{M^1_2}^2 
 \\
 &\leq |\delta|\cdot\text{rel}(\Lambda)\cdot  \alpha_0 ^{-2 (d+2) } \cdot\|g\|_{M^1_2}^2 .
\end{align*}
On the other hand, if $\delta \geq 1/2$, we use the following crude estimate based on Theorem \ref{lem:bdd}, \eqref{eq_c} and Lemma \ref{sjosymbol}:
\begin{align*}
\left|\sigma_\pm(S_{g,\Lambda})-\sigma_\pm(S_{g,\alpha\Lambda})\right| &\leq \left|\sigma_\pm(S_{g,\Lambda})\right|+\left|\sigma_\pm(S_{g,\alpha\Lambda})\right|
\\
&\leq \| S_{g,\Lambda} \|_{B(L^2)} + \| S_{g,\alpha\Lambda} \|_{B(L^2)}
\\
&\lesssim \| G_0 \|_{M^{\infty,1}}+
\| D_{\sqrt{1+\delta}} G_\delta \|_{M^{\infty,1}}
\\
&\lesssim (1+\delta)^d \big( \| G_0 \|_{M^{\infty,1}}+
\| G_\delta \|_{M^{\infty,1}}\big)
\\
&\lesssim (1+\delta)^{2d} \cdot\text{rel}(\Lambda)\cdot \|g\|^2_{M^1}
\\
&< |\delta| \cdot \alpha_0^{-4d} \cdot\text{rel}(\Lambda)\cdot \|g\|^2_{M^1}.
\end{align*}
Hence, for all $\delta \in (-1,\infty)$,
\begin{align}\label{eq_l}
	\left|\sigma_\pm(S_{g,\Lambda})-\sigma_\pm(S_{g,\alpha\Lambda})\right|&\lesssim |\delta|\cdot\text{rel}(\Lambda)\cdot  \alpha_0 ^{-4 d } \cdot\|g\|_{M^1_2}^2 .
\end{align}

Finally, observe that since $\alpha _0 < \alpha<1/\alpha_0$ and $\alpha_0<1$,
\begin{equation}\label{eq:est-alpha-delta}
|\delta|=\frac{|1-\alpha^2|}{\alpha^2}=\frac{1+\alpha}{\alpha^2}|1-\alpha|\leq
(\alpha _0^{-1} +  {\alpha _0}^{-2})  |1-\alpha| \leq
2 {\alpha _0}^{-2}    |1-\alpha|,
 \end{equation}
 which in combination with \eqref{eq_l} yields \eqref{eq_ll}.
\pbox

\begin{proof}[Proof of Theorem \ref{thm4}]
We let again $\alpha^{-1}=\sqrt{1+\delta}$ and take $|\alpha-1|<\varepsilon$ with $\varepsilon$ sufficiently small so that
$\delta \in (-\delta_0,\delta_0)$ and $\alpha_1 < \alpha < 1/\alpha_1$
with $\delta_0 \leq 1/2$ and $ 1/2 < \alpha_1 < 1$. We invoke Theorem \ref{thm:gap-bds}, Lemma \ref{sjosymbol} 
and \eqref{eq:est-alpha-delta} to obtain,
with possibly a smaller value of $\varepsilon$,
\begin{align*}
\big|\sigma^g_\pm(S_{g,{\Lambda}})-
\sigma^g_\pm(S_{g,{\alpha \Lambda}})\big| 
&\lesssim \frac{|\alpha-1|}{L(g)} \cdot 
\sup_{|t|<\delta_0}\left(\|G_t\|_{M^{\infty,1}}\|\partial_tG_t\|_{M^{\infty,1}}+\|G_t\|_{M^{\infty,1}_{0,2}}^2\right)
\\
&\lesssim \frac{|\alpha-1|}{L(g)} \cdot
\mathrm{rel}(\Lambda)^2\cdot \|g\|_{M^1_2}^4,
\end{align*}
as claimed.
\end{proof}

\end{document}